\def\ps@pprintTitle{%
 \let\@oddhead\@empty
 \let\@evenhead\@empty
 \def\@oddfoot{\footnotesize\itshape\hfill\today}%
 \let\@evenfoot\@oddfoot}
\theoremstyle{definition}
\newtheorem{definition}{Definition}[section]
\theoremstyle{plain}
\newtheorem{theorem}[definition]{Theorem}
\newtheorem{proposition}[definition]{Proposition}
\newtheorem{lemma}[definition]{Lemma}
\newtheorem{cor}[definition]{Corollary}
\theoremstyle{remark}
\newtheorem{example}{Example}
\newcommand{\N}{\mathbb{N}}
\newcommand{\R}{\mathbb{R}}
\newcommand{\F}{\mathcal{F}}
\newcommand{\pconv}{\xrightarrow{P}}
\newcommand{\Var}{\operatorname{Var}}
\newcommand{\wconv}{\Rightarrow}
\journal{}
\begin{document}

\begin{frontmatter}



\title{Estimation of state-dependent jump activity and drift for Markovian semimartingales\footnote{To appear in: Journal of Statistical Planning and Inference. DOI:10.1016/j.jspi.2020.04.009}}


\author{Fabian Mies}

\address{RWTH Aachen University, Institute of Statistics\\
Wüllnerstraße 3, D-52062 Aachen, Germany\\
mies@stochastik.rwth-aachen.de}

\begin{abstract}
	The jump behavior of an infinitely active Itô semimartingale can be conveniently characterized by a jump activity index of Blumenthal-Getoor type, typically assumed to be constant in time. 
	We study Markovian semimartingales with a non-constant, state-dependent jump activity index and a non-vanishing continuous diffusion component. 
	A nonparametric estimator for the functional jump activity index is proposed and shown to be asymptotically normal under combined high-frequency and long-time-span asymptotics. 
	Furthermore, we propose a nonparametric drift estimator which is robust to symmetric jumps of infinite variance and infinite variation, and which attains the same asymptotic variance as for a continuous diffusion process. 
	Simulations demonstrate the finite sample behavior of our proposed estimators.
	The mathematical results are based on a novel uniform bound on the Markov generator of the jump diffusion.
\end{abstract}

\begin{keyword}
infinite activity \sep drift estimation \sep nonparametric inference \sep high-frequency asymptotics \sep infinite variance

\MSC 60J35 
\sep 60J75 
\sep 60G52 
\sep 62G08 
\end{keyword}

\end{frontmatter}

\section{Introduction}
As data is available at finer temporal resolution, rather detailed models of stochastic processes become statistically tractable. Extending diffusion-based continuous time models, processes with some form of jump behavior have gained attention in the literature. While classical diffusion processes are fully described by their local volatility and drift, processes with jumps admit a greater flexibility, as the conditional jump behavior is summarized by a compensating measure, which is in general an infinite-dimensional object. Thus, statistical modeling of the jumps is essentially a nonparametric problem. In this paper, we specify the form of the local jump measure semiparametrically, while considering state-dependence of these parameters in a nonparametric framework. More precisely, we focus on the behavior of the infinite activity component of a jump diffusion in the form of an index of jump activity, similar to \cite{ait2009estimating}.

In particular, we develop estimators for a continuous-time, scalar Markov process of the type \begin{align}
	\label{eqn:SDEdef}X_t &= \int_0^t \mu(X_{s}) ds + \int_0^t \sigma(X_s) dB_s + J_t,\qquad t\geq 0, \\
	\nonumber J_t&=\int_0^t \int_{|c(X_{s-},z)|\leq 1} c(X_{s-},z) (N-\nu)(dz,ds) + \int_0^t \int_{|c(X_{s-},z)|> 1} c(X_{s-},z) {N}(dz,ds).
\end{align}
Here $\mu\colon\R \to\R$, $\sigma\colon\R \to [0,\infty)$, $c\colon\R^2\to \R$ are measurable functions, and ${N}$ is a Poisson counting measure on $\R\times [0,\infty)$ with intensity $\nu(dz) dt$ such that $\int (1\wedge c(X_{s-},z)^2)\nu(dz)<\infty$ almost surely, for each $s\geq 0$. Furthermore, we assume the jump term $J_t$ to be symmetric in the sense that for each $x$, the instantaneous compensating measure $\nu_t(dz)$ given by the image measure $c(X_{t-},\cdot)\circ\nu(dz)$ is symmetric. 
Throughout, we assume that the solution of \eqref{eqn:SDEdef} exists, which can be guaranteed, for example, by suitable Lipschitz conditions (see \cite{applebaum2009levy}). 
Hence, $X_t$ is a semimartingale. 
Detailed regularity assumptions on the functions $\mu(x)$ and $\sigma(x)$ will be imposed in the sequel.
Regarding the jump component, we impose the following conditions.
\begin{itemize}
	\item[\textbf{(J1)}] The compensating intensity measure given by the push-forward $c(x,z)\circ \nu(dz)$ admits a Lebesgue density $\rho(x,z)$, i.e.\ $c(x,z)\circ \nu(dz) = \rho(x,z)\,dz$ for each $x\in\R$. The jump density $\rho$ is symmetric w.r.t.\ $z$, i.e.\ $\rho(x,z)=\rho(x,-z)$. 
	\item[\textbf{(J2)}] There exists a $C_\rho>0$ and values $1<\tau<\alpha<2$ such that the jump density satisfies $|\frac{d^k}{dx^k}\rho(x,z)| \leq C_\rho \left( |z|^{-1-\alpha} \vee |z|^{-1-\tau} \right)$, for all $x\in\R$, and $k=0,1,2$.
\end{itemize}
Assumption (J2) ensures that the jump-activity index of $X_t$ is uniformly bounded away from $2$, that is $\int 1\wedge|c(x,z)|^\beta \, \nu(dz) = \int 1\wedge |z|^\beta\rho(x,z)\, dz < \infty$ for all $\alpha<\beta \leq 2$. Furthermore, (J2) bounds the tails of the intensity measure so that the Lévy process corresponding to the intensity $\rho(x,z)dz$ has finite first moments. The uniformity in $x$ requires a suitable form of smoothness. 

We consider the statistical setting of discrete observations $X_{t_i}$ for an equidistant grid $t_i=ih, i=1,\ldots, n+1$ of meshsize $h$, and we study joint high-frequency and ergodic asymptotics, i.e.\ $h=h_n\to 0$ and $T=nh_n \to \infty$ simultaneously.
Our interest lies in nonparametric estimation of the drift function $\mu(x)$, as well as the state-dependent behavior of the small jumps of $J_t$. 
In a more general framework, in particular, without imposing symmetry and a tail bound on the jump measure, \cite{Ueltzhofer2013} studies the jump dynamics by estimating $\rho(x,z)$ nonparametrically for finitely many values $z$.
In contrast to the mentioned study, we are interested in the detailed behavior of $\rho(x,z)$ as $|z|\to 0$.
More precisely, we will consider the case $\rho(x,z)\approx r(x)|z|^{-1-\alpha(x)}$ for $|z|\to 0$ in an appropriate sense to be made precise, and $\alpha(x) \in (0,2)$. 
Thus, the small jumps behave locally like an $\alpha(x)$-stable process.
This property is also referred to as locally-stable in the literature (e.g.\ \cite{masuda2016non}), and $\alpha(x)$ is the (spot) jump activity index of $X_t$. 
If $\rho(x,z) = r(x)|z|^{-1-\alpha(x)}$ for all $z$ exactly, the jump process is a stable-like process as considered by \cite{bass1988occupation,bass1988uniqueness}. 
For a L{\'e}vy process, $\alpha(x)\equiv\alpha$ is known as the Blumenthal--Getoor index \citep{blumenthal1961sample}. 
In section \ref{sec:stable-like}, we will construct nonparametric estimators of $\alpha(x)$ and $r(x)$ and derive their asymptotic distribution.
To the best of our knowledge, the only other statistical treatment of a non-constant jump activity index $\alpha(x)$ is due to \cite{todorov2017testing}, where a pure-jump process without a Brownian component is studied. 
The importance of the presence of the diffusion term is discussed below, together with further related work.

From a purely statistical perspective, the jump activity $\alpha$ resp.\ $\alpha(x)$ is of interest because it can be estimated in a pure high-frequency setting, keeping $T$ fixed. This case has been initially studied by \cite{ait2009estimating}, and later by \cite{jing2012jump}, \cite{bull2016near}. In contrast, estimation of the drift and the full Lévy measure requires observations over an increasing time span. We will always let $T\to\infty$, and the results derived in this paper reflect the latter distinction as the rate of convergence of our estimator for $\alpha(x)$ is faster than for the drift $\mu(x)$. Another motivation to study the jump activity index is raised by arbitrage theory in mathematical finance. Even for a Lévy process, different values of $\alpha$ and $r$ induce singular probability measures on path space \citep{ait2012identifying}, in accordance with the identifiability from high-frequency observations. Thus, any full specification of an equivalent pricing measure in continuous time needs to match the jump activity index of the process under the physical probability measure. 

Besides the jump component, the second object of interest to us is the drift $\mu(x)$.
For jump processes of the form \eqref{eqn:SDEdef}, the drift is in general not well defined as it depends on the specific truncation function $\psi$ used to define the compensated Poisson integral. 
Here, we employ the truncation function $\psi(z) = z \mathds{1}_{|z|\leq 1}$ to fix the value of the drift.
Nevertheless, the quantity $\mu(x)$ is somewhat universal if we impose the jumps to be symmetric.
Under condition (J1), any truncation satisfying $\psi(z)=\psi(-z)$ will yield the same drift value.
Another option to define a canonical drift term is to suppose that the jumps are summable, i.e.\ $\int |z| \rho(x,z)\, dz<\infty$, such that no truncation is necessary at all. 
The latter approach is used, e.g., by \cite{gloter2018jump}.
A third option is to assume that the law of the jumps is fully known to the statistician, as done by \cite{Amorino2018}, such that the effect of changing $\psi$ can be accounted for.
In this paper, we will assume symmetry when identifying the drift.
We propose a nonparametric estimator for $\mu(x)$ and derive its asymptotic distribution. 
By employing a jump-filtering technique similar to \cite{gloter2018jump}, we are able to derive the same asymptotic variance as in the diffusion case, while allowing for symmetric jumps of infinite variation and infinite variance as a nuisance.
Our estimator is similar to the nonparametric thresholded drift estimator of \cite{Mancini2011}, who require the jumps to be of finite activity.

The estimators proposed in this paper are based on smoothly truncated increments of the form $F_{n,t}=f(u_n(X_{t+h_n}-X_t))$ for a bounded smooth function $f$ and a sequence $u_n =o(h_n^{-1/2})$ as $h_n\to 0$. Here, we use the Markov property of $X$ to derive non-asymptotic approximations for the conditional moments of $F_{n,t}$. These approximations make use of new analytical bounds on the infinitesimal Markov generator of $X_t$. The transformed increments $F_{n,t}$ are localized by a Nadaraya-Watson kernel estimator with bandwidth $b\to 0$, only considering those increments where $|X_t - x| = \mathcal{O}(b)$. For estimation of the jump activity, we choose a suitable function $f$ as specified below and find that $E(f(u_n(X_{t+h_n}-X_t))|X_t=x)$ scales like $u_n^{\alpha(x)}$.
This scaling is exploited to estimate $\alpha(x)$. The exact construction of the estimator is described in section \ref{sec:stable-like}, where we also derive its asymptotic normality at rate $\sqrt{Tb} u_n^{\alpha(x)/2}$. The effective rate is thus determined by some required upper bounds on $u_n$, and we can achieve at least the rate $\sqrt{Tb}h^{-\frac{\alpha(x)}{8}}$. Using the derived approximation of conditional expectations, we are also able to construct a pointwise estimator for the drift $\mu(x)$ by choosing a nonlinear odd function $f$ of suitable form, and $u_n=1$. This tempered drift estimator is asymptotically normal at rate $\sqrt{Tb}$, even in the presence of infinite variation jumps with infinite variance. The asymptotic variance is found to be affected by the jump component. A jump-filtered drift estimator can be realized by letting $u_n\to\infty$ slowly, recovering the asymptotic variance of the continuous diffusion case. 

\subsection*{Related work}

For nonparametric drift estimation in the presence of jumps, \cite{johannes2004statistical} suggested to identify the process by a kernel estimator of the conditional polynomial moments. This idea is pursued rigorously by \cite{bandi2003functional}, who derive the asymptotic normality of conditional polynomial moment estimators as $T\to\infty, h_n\to 0$, in particular of the drift term. They restrict the process to have finitely many jumps with finite moments. An extension to local linear kernel estimators is given by \cite{hanif2012local}. If the jumps have infinite activity and are driven by a L{\'e}vy process, the drift can be estimated by a sieve regression \cite{schmisser2014non} or by a nonparametric kernel estimator \cite{Funke2015}, provided $T\to\infty$. The mentioned studies assume the increments of $X_t$ to have finite conditional variances. As a result, the rate of convergence of the kernel estimator with bandwidth $b\to 0$ is $\sqrt{Tb}$, matching the diffusion case without jumps. Jumps of infinite variance are studied by \cite{long2013nadaraya} for a stochastic differential equation $dX_t = \mu(X_t)dt + r(X_{t-}) dZ_t$ driven by a pure-jump $\alpha$-stable L{\'e}vy process $Z_t$. They show that a simple Nadaraya-Watson kernel estimator of the drift remains consistent, though the asymptotic distribution is no longer normal, and the rate of convergence drops to $(Tb)^{1-1/\alpha}$ for $\alpha\in(1,2)$. Note also that this estimator is consistent only for $\alpha>1$, as the heavy tailed increments of the stable distribution affect the local average. Similar results are obtained by \cite{lin2014local,wang2013local} for a local linear estimator. In an earlier study \cite{mies2018tempering}, we have shown that for the same $\alpha$-stable model with $\alpha\in(1,2]$, the Nadaraya-Watson estimator can be modified by considering the transformed increments $f(X_{t+h}-X_t)$ for an odd bounded function $f$. This modification, called tempering therein, yields an asymptotically normal estimator which recovers the Gaussian rate $\sqrt{Tb}$. Although allowing for heavy-tailed jumps of infinite variation, the model of \cite{long2013nadaraya} and \cite{mies2018tempering} is quite restrictive as the process contains no Brownian component and the driving L{\'e}vy process is fully specified. In contrast, the jump diffusion model \eqref{eqn:SDEdef} allows for more flexibility in the state-dependent behavior of the jumps, similar to \cite{bandi2003functional}, but without restrictions on activity and second moments. Our results show that the tempering approach recovers the rate $\sqrt{Tb}$ also in this more general setting. It should be noted that the assumption of symmetric jumps is important for the tempered estimator to be asymptotically unbiased.

Although matching the rate of convergence of the continuous case, the asymptotic variance of the tempered drift estimator is increased by the presence of the jumps.
In a parametric setting, the variance of the continuous case can be recovered by filtering the jumps \citep{gloter2018jump}. 
That is, jump filtering yields efficient parametric estimators. 
While \cite{gloter2018jump} requires the jumps to be of finite variation, \cite{Amorino2018} allow for arbitrary jumps, as long as the law of the jump process is known to the statistician.
In section \ref{sec:drift}, we show that jump filtering can also be applied in a nonparametric setting to eliminate the influence of the jumps asymptotically. 
In contrast to the described approaches, we consider the jump term as a nuisance and allow for infinite variation, while on the other hand requiring symmetry.

To study the jump behavior, a state-dependent jump compensator can be estimated, for example, by the already mentioned pointwise estimator of \cite{Ueltzhofer2013} for $\rho(x,z)$.
Another related line of research tests the jump compensator $\nu_s(dz)$ of an Itô-semimartingale for constancy, see \cite{Bucher2017} and \cite{Hoffmann2018a}. 
They effectively test whether $\nu_s([z,\infty))=\nu_0([z,\infty))$ for all $s$ and all $z>\delta$. 
As $\delta$ is fixed, this test does not necessarily detect a change of the jump activity index which is only characterized by the behavior of the jump measure near zero.
A modification presented by \cite{Hoffmann2018} is to introduce the functions $\varphi_s(z) = \int_{z}^\infty \varphi(y)\nu_s(dy)$ for a function $\varphi$ such that $\varphi_s(0)$ is finite. 
Then one can test whether $\varphi_s(z)=\varphi_0(z)$ for all $s$ and all $z$.
In principle, this approach can detect changes of the jump activity index. 
However, the effect of small jumps is dampened by the function $\varphi$, and thus the procedure might not be very sensitive to changes of $\nu_s$ around zero. 
Moreover, they require the characteristics of the process to vary smoothly in the sense that a sequence $X^n$ of stochastic processes is observed, with jump compensators $\nu^n_s = \tilde{\nu}_{\frac{s}{T}}$ for a deterministic measure-valued function $t\mapsto \tilde{\nu}_t.$
This approach is common for change-point detection, but distinguishes their framework from the statistical setting considered in the present paper.
A change-point test which is tailor-made for changes of the jump activity index $\alpha$ is presented by \cite{todorov2017testing}.
The model therein is a non-Markovian semimartingale which is a pure-jump process, and the limit distribution of the test statistic is derived only for the case of constant jump activity.

In a pure high-frequency setting of $n$ equidistant observations at frequency $1/n$, the optimal rate of convergence for estimation of a constant value $\alpha$ in the presence of a diffusion component is conjectured to be $n^{\alpha/4}$, up to logarithmic factors.
This rate has been derived by \cite{ait2012identifying} upon analyzing the Fisher information matrix. 
An early estimator due to \cite{ait2009estimating} achieved the rate $n^\frac{\alpha}{10}$, which has been improved to $n^{\frac{\alpha}{8}}$ by \cite{jing2012jump}.
For arbitrary $\epsilon>0$, there exist estimators which converge at rate $n^{\alpha/4-\epsilon}$, see \cite{reiss2013testing} for the L{\'e}vy case and \cite{bull2016near} for more general Itô semimartingales. 
In concurrent work, we show that the optimal rate $n^{\frac{\alpha}{4}}$ can be achieved for the Lévy case, up to logarithmic factors \citep{Mies2019}.
The situation is different if the diffusion component is absent, $\sigma\equiv 0$. 
In this pure-jump case, a rate of $\sqrt{n}$ can be attained \citep{todorov2015jump}. 
Except for the mentioned change-point test of \cite{todorov2017testing}, all available estimators assume $\alpha(x)\equiv \alpha$ to be constant and thus can not be applied directly to the present situation. 

Our proposed estimator for $\alpha(x)$ achieves a rate of convergence of at least $\sqrt{Tb} h^{-\alpha(x)/8}$, and may be improved by leveraging prior knowledge when choosing tuning parameters, see section \ref{sec:stable-like}. 
There is no matching benchmark for the present nonparametric situation in the literature. 
Potentially, the dependence on the bandwidth $b$ could be improved by stricter smoothness assumptions and a higher order kernel. 
When neglecting the term $\sqrt{b}$ due to smoothing and $\sqrt{T}$ due to the different sampling scheme, we find that the remaining factor $h_n^{-\alpha(x)/8}$ matches the rate of \cite{jing2012jump} for the case of constant $\alpha$ under high-frequency asymptotics. We thus conjecture that our estimator suffers from the same inefficiency, as the desired rate of convergence in the latter case is $h_n^{-\alpha/4}$. 
This is to be expected, since our estimator is essentially a localized version of the estimator of \cite{jing2012jump}.
Transferring other estimators for constant $\alpha$ to the state-dependent situation is thus of interest for future work.

\subsection*{Outline}

The remainder of this paper is structured as follows. In section \ref{sec:generator}, we present the methodology to bound the conditional moments of nonlinearly transformed increments of jump diffusion process by an analytical investigation of the Markov transition semigroup. The derived bounds on the infinitesimal generator might be of independent interest. These approximations are used to construct an asymptotically normal estimator of the drift in section \ref{sec:drift}. Inference for the small jumps, i.e.\ jump activity estimation, is treated in section \ref{sec:stable-like}. We demonstrate the applicability of the results by a simulation study in section \ref{sec:simulations}. All technical proofs are postponed to section \ref{sec:proofs}.

\subsection*{Notation}

To abbreviate some notation, we introduce $a\wedge b=\min(a,b)$ and $a\vee b=\max(a,b)$ for $a,b\in\R$. 
The space of $k$ times continuously differentiable functions $f:\R\to \R$ is denoted by $\mathcal{C}^k=\mathcal{C}^k(\R;\R)$. By $f'$, we denote the derivative of $f$, and higher order derivatives are written as $f''$ and so on, or as $f^{(k)}, k\in\N$. The uniform norm is $\|f\|_\infty = \sup_x |f(x)|$. Weighted uniform norms $\|f\|_{\infty, p}$ are defined in the sequel, as well as the shift operator $\tau_x$ and the linear operators $\mathcal{A}, \mathcal{A}^*, \mathcal{J},\mathcal{J}^*$ and the scaled fractional derivative $f^{[\alpha]}$. For random variables $A_n, B_n$, the expression $A_n=\mathcal{O}_P(B_n)$ means that $A_n/B_n$ is bounded in probability, and $A_n = o_P(B_n)$ means $A_n/B_n \pconv 0$, where $\pconv$ indicates convergence in probability. Weak convergence of distributions and random variables is denoted as $\wconv$.

\section{Conditional moments of jump diffusions}\label{sec:generator}

The estimators proposed in the sequel will be based on nonlinearly transformed increments of the semimartingale $X_t$ given by \eqref{eqn:SDEdef}. Central to our analysis are expressions for the corresponding conditional moments, i.e.\ quantities of the form $E(f(X_{t+h}-X_t)| X_t)$. This includes the rescaled increments $f(u(X_{t+h}-X_t))$ by setting $f_u(x) = f(ux)$. The expressions derived in this section allow us to build suitable nonparametric estimators in subsequent sections, as well as enabling the study of their asymptotic behavior. To this end, let $f\in\mathcal{C}^2$ be a twice continuously differentiable function. Furthermore, denote by 
\begin{align*}
	\mathcal{A} f (x) = \mu(x) f'(x) + \frac{\sigma^2(x)}{2} f''(x) + \int_{\R} \left[ f(x+z)-f(x) - f'(x)z\mathds{1}_{|z|\leq 1}\right]\rho(x,z)\, dz
\end{align*}
the formal infinitesimal generator of the Markov process $X_t$. Then It{\^o}'s formula yields for any $h>0$ and any $t\geq 0$, \begin{align*}
	f(X_{t+h})= f(X_t) + \int_0^h \mathcal{A}f(X_{t+s})\, ds + (M_{t+h}-M_t),
\end{align*}
where $M_t$ is a local martingale. If $f$ is a bounded function, and $\mathcal{A}f$ is bounded as well, then $M_t$ is in fact a martingale and integrating yields \begin{align}
	\nonumber T_hf(x) = E\left(f(X_{t+h})|X_t=x\right) &= f(x) + \int_0^h E\left(\mathcal{A}f(X_{t+s})|X_t=x\right)\,ds \\
		\label{eqn:ito-expansion}	&= f(x) + \int_0^h T_s \mathcal{A}f(x)\, ds.
\end{align}
It is convenient to formulate this identity in terms of the conditional expectation operator $T_h$, which maps continuous bounded functions $f$ to continuous bounded functions $T_hf$ as defined above. Due to the Markov property, the family $\{T_h, h>0\}$ constitutes a semigroup of contractions on this space, generated by the operator $\mathcal{A}$. For details about the semigroup approach to continuous-time Markov processes, we refer to \cite{ethier2009markov}. In our situation, this approach allows us to study the conditional moments in a purely analytical fashion. In particular, \eqref{eqn:ito-expansion} can be interpreted as a first-order expansion of the conditional expectation $T_hf(x)$. For our statistical applications, we will be able to choose $f$ sufficiently regular such that $\mathcal{A}^2f\in\mathcal{C}^2$ is bounded as well. We may then iterate the expansion \eqref{eqn:ito-expansion} to obtain \begin{align}
	\nonumber T_hf(x) &= f(x) + h \mathcal{A}f(x) + \int_0^h \int_0^s T_r \mathcal{A}^2 f(x)\, dr\, ds\\
	\label{eqn:taylor-1}&= f(x) + h \mathcal{A}f(x) + \mathcal{O}(h^2 \|\mathcal{A}^2f\|_\infty),
\end{align}
since the operator $T_r$ is a contraction w.r.t.\ the uniform norm. In the sequel, we will be interested in conditional expectations of increments of the form $f(X_{t+h}-X_t)$. This can be cast into the previous framework upon noting that $E(f(X_{t+h}-X_t)|X_t=x) = E(f(X_{t+h}-x)|X_t=x)$, which can equivalently be written as $T_h \tau_xf(x)$, where $\tau$ is the shift operator $\tau_xf(y)=f(y-x)$. The leading order term of this conditional expectation is \begin{align}
	\label{eqn:Astardef}\begin{split}\mathcal{A}^*f(x) = \mathcal{A}\tau_xf(x)
	&= \mu(x)f'(0) + \frac{\sigma^2(x)}{2} f''(0) \\
	&\qquad + \int_{\R} \left[ f(z)-f(0) -f'(0)z \mathds{1}_{|z|\leq 1}\right]\rho(x,z)\, dz.
	\end{split}
\end{align}
An appealing property of expression \eqref{eqn:Astardef} is that it depends on $x$ only via the spot characteristics of the process $X_t$, i.e.\ the quantities $\mu(x), \sigma(x)$ as well as the local jump measure.

The statistical benefit of this analytical approach, which we will leverage in the subsequent sections, is that it allows us to approximate conditional expectations and variances non-asymptotically at a known rate. We summarize this as follows.

\begin{proposition}\label{prop:cond-approx}
For any function $f \in \mathcal{C}^2(\R)$ such that $\mathcal{A}\tau_xf\in\mathcal{C}^2$ and $f(0)=0$,
	\begin{align*}
		\left|E(f(X_{t+h}-X_t)|X_t=x) - h \mathcal{A}^*f(x) \right| &\leq h^2 \|\mathcal{A}^2 \tau_xf\|_\infty	\\
		\left|\Var(f(X_{t+h}-X_t)|X_t=x) - h \mathcal{A}^* f^2(x) \right| &\leq h^2 \|\mathcal{A}^2\tau_xf^2\|_\infty + 2 h^2 |\mathcal{A}^*f(x)|^2\\
		&\qquad + 2h^4 \|\mathcal{A}^2\tau_xf\|_\infty^2.
	\end{align*}
\end{proposition}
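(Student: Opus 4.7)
The plan is to apply the iterated semigroup expansion \eqref{eqn:taylor-1} to the shifted function $g = \tau_x f$ at the point $x$, and then derive the variance bound by an elementary algebraic manipulation on top of the mean bound.

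For the mean, note that $E(f(X_{t+h}-X_t)\mid X_t = x) = T_h \tau_x f(x)$. I would invoke \eqref{eqn:taylor-1} with $\tau_x f$ in place of $f$; the hypotheses $f\in\mathcal{C}^2$ and $\mathcal{A}\tau_x f\in\mathcal{C}^2$ make the two iterations of It{\^o}'s formula underlying that expansion legitimate. Three simplifications then give the first inequality: (i) $\tau_x f(x) = f(0) = 0$ by assumption; (ii) $\mathcal{A}\tau_x f(x) = \mathcal{A}^* f(x)$ by the definition \eqref{eqn:Astardef}; and (iii) the contractivity $\|T_r\|_\infty \leq 1$ on bounded continuous functions bounds the double integral remainder $\int_0^h\!\int_0^s T_r \mathcal{A}^2 \tau_x f(x)\,dr\,ds$ by $\tfrac{h^2}{2}\|\mathcal{A}^2\tau_x f\|_\infty \leq h^2\|\mathcal{A}^2\tau_x f\|_\infty$.

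For the variance, the starting point is $\Var(f(X_{t+h}-X_t)\mid X_t=x) = T_h \tau_x(f^2)(x) - [T_h \tau_x f(x)]^2$, using the identity $\tau_x(f^2) = (\tau_x f)^2$ together with $f^2(0) = 0$. I would apply the mean bound twice, once to $f$ and once to $f^2$, producing remainders $R_1$ and $R_2$ with $|R_1|\leq h^2\|\mathcal{A}^2\tau_x f^2\|_\infty$ and $|R_2|\leq h^2\|\mathcal{A}^2\tau_x f\|_\infty$. Expanding the square $[h\mathcal{A}^* f(x) + R_2]^2$ then shows that the variance differs from $h\mathcal{A}^* f^2(x)$ by $R_1 - h^2|\mathcal{A}^* f(x)|^2 - 2h R_2 \mathcal{A}^* f(x) - R_2^2$. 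The cross term is handled by AM--GM: $2h |R_2 \mathcal{A}^* f(x)| \leq h^2 |\mathcal{A}^* f(x)|^2 + R_2^2$, which yields the three contributions on the right-hand side of the claimed bound, with the constants $2$ appearing because $|\mathcal{A}^* f(x)|^2$ and $R_2^2$ each enter twice.

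I do not anticipate a substantive obstacle. The only minor issue is that the mean bound applied to $f^2$ implicitly requires $\mathcal{A}\tau_x f^2 \in \mathcal{C}^2$ with $\mathcal{A}^2\tau_x f^2$ bounded; if the latter norm is infinite the variance bound is vacuously true, so no additional hypothesis is needed. Everything else is routine algebra plus a single application of AM--GM.
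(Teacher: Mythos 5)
Your proof is correct and follows essentially the same route as the paper: the first bound is read off from the iterated expansion \eqref{eqn:taylor-1} applied to $\tau_x f$, and the variance bound comes from applying that mean bound to both $f$ and $f^2$ and controlling the squared conditional mean, where your expand-and-AM--GM step is just an unpacking of the paper's direct use of $(a+b)^2 \leq 2a^2 + 2b^2$. Your closing remark about the implicit requirement on $\mathcal{A}\tau_x f^2$ (with the bound being vacuous otherwise) is a fair and correctly resolved observation.
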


To make the approximation of Proposition \ref{prop:cond-approx} effective, we are looking for conditions to ensure $\|\mathcal{A}^2 f\|_\infty < \infty$. 
It turns out that assumptions (J1) and (J2) are suitable for this purpose.

To conveniently formulate our results, we introduce a family of weighted supremum norms for a function $f:\R\to\R$, given by \begin{align*}
	\|f\|_{\infty, p} = \sup_x |f(x)| (|x|\vee 1)^p, \quad p\in \R.
\end{align*}
These norms are ordered such that $\|f\|_{\infty, q} \leq \|f\|_{\infty,p}$ whenever $q\leq p$. For $p=0$, we recover the usual uniform norm, and we will use the notation $\|f\|_{\infty, 0}=\|f\|_{\infty}$ interchangeably. Furthermore, the weighted norms satisfy a duality of the form \begin{align}
	\|f \cdot g\|_{\infty, p} \leq \|f\|_{\infty, p+q} \|g\|_{\infty, -q} \label{eqn:norm-duality}
\end{align}
for any two functions $f,g:\R\to\R$ and any $p,q\in\R$.
Note that an equivalent family of norms is used by \cite[Proposition 2]{Amorino2019}, for $p\leq -1$.

To treat the full generator $\mathcal{A}f$, we impose growth conditions on the drift and diffusion functions $\mu$ resp.\ $\sigma$.

\begin{itemize}
	\item[\textbf{(D)}] $\mu\in\mathcal{C}^2$, $\|\mu\|_{\infty, -1},\|\mu^{(k)}\|_{\infty, -p_D} \leq C_\mu < \infty$, $k=1,2$, for some $p_D\geq 0$. 
	\item[\textbf{(V)}] $\sigma^2\in \mathcal{C}^2$, $\|\sigma^2\|_{\infty, 0}, \|(\sigma^2)^{(k)}\|_{\infty, -p_V}\leq C_\sigma < \infty$, $k=1,2$, for some $p_V\geq 0$. 
\end{itemize}

For our statistical applications, the values of $p_D, p_V$ can be quite large as they can be compensated by choosing very rapidly decaying functions $f$, as the following Theorem \ref{thm:A2full} shows. However, we need to impose stricter zeroth order conditions on $\mu$ and $\sigma^2$ since the admissible ranges of $q$ in the technical Lemma \ref{lem:C2domain} in the appendix are restricted.

\begin{theorem}\label{thm:A2full}
	If (J1) and (J2) as well as conditions (D) and (V) hold, then there exists a constant $K=K(\alpha,\tau,C_\rho, C_\mu,C_\sigma)>0$ such that for all $f\in\mathcal{C}^4$,  
	\begin{align*}
		\|\mathcal{A}^2f\|_{\infty, 0} \leq K\left( \|f'\|_{\infty, (\tau \vee p_V \vee p_D)+1} + \|f''\|_{\infty, (\tau \vee p_V \vee p_D)+1} + \|f'''\|_{\infty, \tau \vee p_V \vee p_D} + \|f''''\|_{\infty, 0} \right).
	\end{align*}
\end{theorem}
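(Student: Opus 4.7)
The plan is to split $\mathcal{A}=\mathcal{M}+\mathcal{V}+\mathcal{J}$ with $\mathcal{M}f=\mu f'$ and $\mathcal{V}f=\tfrac12\sigma^2 f''$ (so $\mathcal{J}$ is the integro-differential jump part appearing in \eqref{eqn:Astardef}), expand $\mathcal{A}^2 f$ into the nine compositions $\mathcal{X}\mathcal{Y}f$ with $\mathcal{X},\mathcal{Y}\in\{\mathcal{M},\mathcal{V},\mathcal{J}\}$, bound each summand in $\|\cdot\|_{\infty,0}$, and collect. The weighted-norm duality \eqref{eqn:norm-duality} is the workhorse throughout, trading the polynomial growth of $\mu$, $\sigma^2$ and their derivatives against decay weights on derivatives of $f$.

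For the four local--local compositions I would simply apply the Leibniz rule, writing $\mathcal{M}\mathcal{M}f=\mu\mu' f'+\mu^2 f''$, $\mathcal{V}\mathcal{M}f=\tfrac12\sigma^2(\mu'' f'+2\mu' f''+\mu f''')$, and analogously for $\mathcal{M}\mathcal{V}f$, $\mathcal{V}\mathcal{V}f$. Every resulting monomial has the shape (coefficient)$\cdot f^{(k)}$ and is handled by a short telescope such as
\begin{align*}
\|\mu^2 f''\|_{\infty,0} &\leq \|\mu\|_{\infty,-1}^2\,\|f''\|_{\infty,2},\\
\|\sigma^2 \mu' f''\|_{\infty,0} &\leq \|\sigma^2\|_{\infty,0}\,\|\mu'\|_{\infty,-p_D}\,\|f''\|_{\infty,p_D},
\end{align*}
the coefficient norms being controlled by $C_\mu,C_\sigma$ via (D) and (V). This step absorbs powers and derivatives of $\mu,\sigma^2$ into the weight exponent $p_D\vee p_V$ on the relevant $f^{(k)}$, with an extra $+1$ on $f',f''$ whenever $\mu$ itself appears (since $\|\mu\|_{\infty,-1}$, not $\|\mu\|_{\infty,0}$, is finite).

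The jump terms $\mathcal{M}\mathcal{J}$, $\mathcal{V}\mathcal{J}$, $\mathcal{J}\mathcal{M}$, $\mathcal{J}\mathcal{V}$, $\mathcal{J}\mathcal{J}$ are the main obstacle. For $\mathcal{M}\mathcal{J}f$ and $\mathcal{V}\mathcal{J}f$ I need quantitative weighted-norm control of $(\mathcal{J}f)',(\mathcal{J}f)''$; for $\mathcal{J}\mathcal{M}f$ and $\mathcal{J}\mathcal{V}f$, I need to apply $\mathcal{J}$ to a polynomially-growing multiple of $f'$ or $f''$; and $\mathcal{J}\mathcal{J}f$ forces two $x$-differentiations of $\rho(x,z)$, which is precisely why the full $k=2$ case of (J2) is needed. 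Each of these is exactly what Lemma \ref{lem:C2domain} is designed to supply, so the plan is to invoke that lemma as a black box. The exponent $\tau$ in the theorem then enters through (J2): Taylor-expanding $f(x+z)-f(x)-f'(x)z\mathds{1}_{|z|\leq 1}$ and pairing with the two tail regimes $|z|^{-1-\alpha}\vee|z|^{-1-\tau}$ leaves integrals whose worst divergence is governed by $\tau$ (since $\tau<\alpha<2$), producing the weight exponent $\tau\vee p_V\vee p_D$ in the final bound. The hardest work is thus confined to Lemma \ref{lem:C2domain}; once it is in hand, the theorem follows by the finite bookkeeping above, with all numerical constants collapsing into a single $K=K(\alpha,\tau,C_\rho,C_\mu,C_\sigma)$.
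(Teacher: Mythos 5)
Your proposal is correct and follows essentially the same route as the paper: the paper simply packages your nine-term bookkeeping into Corollary \ref{cor:full-gen} (the case $k=0$ for the outer application of $\mathcal{A}$ and $k=1,2$ for the weighted bounds on $(\mathcal{A}f)'$, $(\mathcal{A}f)''$) and applies it in nested form to $g=\mathcal{A}f$, with all the real work done in Lemma \ref{lem:C2domain} exactly as you invoke it. Your identification of where the $+1$ (from $\|\mu\|_{\infty,-1}$), the exponent $\tau\vee p_V\vee p_D$, and the $k=2$ case of (J2) (for $\mathcal{J}\mathcal{J}f$) each enter matches the paper's argument.
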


The bound of Theorem \ref{thm:A2full} is in particular finite if $f$ is compactly supported or rapidly decaying in the Schwartz sense. Furthermore, the bound can be applied to bound $\|\mathcal{A}^2\tau_xf\|_\infty$ in Proposition \ref{prop:cond-approx} since $\|\tau_xf\|_{\infty, p} \leq 2^p(|x|\vee 1)^p \|f\|_{\infty, p}$, see Lemma \ref{lem:normsup} in the appendix. 

The formulation in terms of the weighted uniform norm, which might seem rather technical, arises because we allow the derivatives of $\mu$ and $\sigma$ to be unbounded, and $\mu$ to grow linearly. The latter property also prevents us from studying bounds of higher order, i.e.\ $\|\mathcal{A}^k f\|_{\infty, 0}$ for $k>2$. If we are willing to impose stricter boundedness conditions, we obtain the following stronger result.

\begin{theorem}\label{thm:Ak}
	Let (J1) and (J2) hold, and assume additionally that $|\frac{d^k}{dx^k} \rho(x,z)| \leq C_\rho (|z|^{-1-\alpha} \vee |z|^{-1-\tau})$ and $\|\mu^{(k)}\|_{\infty, 0}\leq C_\mu, \|(\sigma^2)^{(k)}\|_{\infty, 0}\leq C_\sigma$, for all $k=0,\ldots, 2m$. Then there exists a $K>0$ such that for all $f\in\mathcal{C}^{2k}$ \begin{align*}
		\|\mathcal{A}^k f\|_{\infty, 0} \leq K \sum_{j=1}^{2k} \|f^{(j)}\|_{\infty, 0},\qquad k=1,\ldots, m+1.
	\end{align*}
\end{theorem}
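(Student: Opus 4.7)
The plan is to prove the bound by induction on $k$, with the main work contained in a one-step lemma that bounds the derivatives of $\mathcal{A}f$ by derivatives of $f$. Concretely, for every $0 \leq j \leq 2m$ I would first establish
\[
\|(\mathcal{A}f)^{(j)}\|_\infty \leq K \sum_{i=1}^{j+2} \|f^{(i)}\|_\infty,
\]
by splitting $\mathcal{A}f$ into its drift, diffusion, and jump components. The Leibniz rule handles $\mu f'$ and $\tfrac{1}{2}\sigma^2 f''$ immediately, using the assumed uniform boundedness of $\mu^{(l)}$ and $(\sigma^2)^{(l)}$ up to order $2m$. For the jump part $C(x) = \int g_x(z)\rho(x,z)\,dz$ with $g_x(z) = f(x+z)-f(x)-f'(x)z\mathds{1}_{|z|\leq 1}$, I would differentiate under the integral and use the key identity $\partial_x^l g_x(z) = f^{(l)}(x+z) - f^{(l)}(x) - f^{(l+1)}(x)\, z\mathds{1}_{|z|\leq 1}$, which preserves the small-$z$ cancellation at every order $l$. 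Taylor remainder then yields $|\partial_x^l g_x(z)| \leq \tfrac{z^2}{2}\|f^{(l+2)}\|_\infty$ on $|z|\leq 1$, while the mean value theorem gives $|\partial_x^l g_x(z)| \leq |z|\|f^{(l+1)}\|_\infty$ on $|z|>1$. Combined with the strengthened hypothesis $|\partial_x^{j-l}\rho(x,z)| \leq C_\rho(|z|^{-1-\alpha}\vee|z|^{-1-\tau})$ and the Leibniz expansion in the integrand, the bound follows because $\alpha<2$ makes the near-zero integral convergent and $\tau>1$ makes the tail integral convergent.

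For the iteration I would strengthen the inductive statement to
\[
\|(\mathcal{A}^k f)^{(j)}\|_\infty \leq K_k \sum_{i=1}^{2k+j} \|f^{(i)}\|_\infty \qquad \text{for } 0 \leq j \leq 2(m+1-k).
\]
The base case $k=1$ is exactly the one-step lemma. For the step from $k$ to $k+1$, apply the one-step lemma with $\mathcal{A}^k f$ in place of $f$; the constraint $j+2 \leq 2(m+1-k)$, equivalently $j \leq 2(m+1-(k+1))$, is precisely what the induction preserves. Substituting the inductive hypothesis bounds on $\|(\mathcal{A}^k f)^{(i)}\|_\infty$ for $i \leq j+2$ then telescopes to the claimed estimate for $k+1$. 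Specialising to $j=0$ recovers the theorem.

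The principal obstacle is the one-step lemma: justifying termwise differentiation under the integral and combining the two-sided singularity bound on $\partial_x^{j-l}\rho$ with the order-two cancellation in $\partial_x^l g_x$ so that the jump integral contributes only $\|f^{(l+1)}\|_\infty$ and $\|f^{(l+2)}\|_\infty$ on the right-hand side. The hypothesis that \emph{every} $x$-derivative $\partial_x^l\rho$ up to order $2m$ satisfies the same (J2)-type bound is what allows the recursion to continue past $k=2$; in contrast with Theorem \ref{thm:A2full}, whose weighted norms are forced by the polynomial growth of $\mu$ and unbounded derivatives of the coefficients, the uniform bounds imposed here are exactly what make iteration in the plain uniform norm feasible. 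Once the one-step lemma is in hand, the induction itself is essentially mechanical.
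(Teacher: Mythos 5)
Your proposal is correct and follows essentially the same route as the paper: the one-step estimate $\|(\mathcal{A}f)^{(j)}\|_\infty \leq K\sum_{i=1}^{j+2}\|f^{(i)}\|_\infty$ is exactly the paper's Lemma \ref{lem:C2domain} (bound \eqref{eqn:Jkbound}, obtained by the same Leibniz expansion of the jump integral and the same order-two cancellation in $\partial_x^l g_x$) combined with the product rule for the drift and diffusion terms, and the iteration is the paper's induction, merely reorganized (you induct on the power $k$ with a strengthened statement about $\|(\mathcal{A}^kf)^{(j)}\|_\infty$ and compose as $\mathcal{A}\circ\mathcal{A}^k$, while the paper inducts on $m$ and composes as $\mathcal{A}^m\circ\mathcal{A}$). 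Your bookkeeping of the admissible range $j\le 2(m+1-k)$ is, if anything, slightly more explicit than the paper's.
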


In the statistical setting of this paper, the boundedness assumptions of Theorem \ref{thm:Ak} are too strict. In particular, we will need the Markov process $X_t$ to be ergodic, which is typically ensured by a mean-reverting drift term, e.g.\ $\mu(x)=-x$ (see \cite[Lemma 2.4]{masuda2007ergodicity}). 
This choice of $\mu$ satisfies condition (D), but it precludes the application of Theorem \ref{thm:Ak}.
In the following, we will only make use of Theorem \ref{thm:A2full}, but the stronger Theorem \ref{thm:Ak} might be of independent interest.

\section{Nonparametric drift estimation}\label{sec:drift}

As we have seen in the previous section, by transforming the increments $X_{t+h}-X_t$ of the process nonlinearly and studying $f(X_{t+h}-X_t)$, conditional variances exist and can be handled by means of Proposition \ref{prop:cond-approx}. This holds true in spite of a potentially heavy-tailed jump process. As performed in a previous study for a more restricted model \citep{mies2018tempering}, we may use this property to construct an asymptotically normal estimator of the drift function $\mu(x)$. To this end, choose a sufficiently smooth, odd function $f$ with $f(0)=0$. By assuming the jumps to be symmetric, we obtain \begin{align*}
	\int_{\R} \left[ f(z)-f(0)-f'(0)z\mathds{1}_{|z|\leq 1} \right]\, \rho(x,z)\, dz = 0.
\end{align*}
Furthermore, $f''(0)=0$ since $f$ is odd.
Then Proposition \ref{prop:cond-approx} yields $E(f(X_{t+h}-X_t)|X_t=x) \approx h\mathcal{A}^*f(x)  = h\mu(x)f'(0)$. The drift can thus be identified by estimating this conditional expectation. 

If we have at hand an equidistant sample $X_{t_i}$, $t_i=ih, i=1,\ldots, n+1$, of the process $X_t$ solving \eqref{eqn:SDEdef}, we can estimate the conditional expectation by means of a nonparametric kernel smoother. 
To keep the exposition simple, we study the Nadaraya-Watson estimator of the following form
\begin{align*}
	\hat{\mu}_n(x) &= \frac{1}{\hat{m}_n(x)}\frac{1}{n} \sum_{i=1}^n \frac{f(X_{t_{i+1}} - X_{t_i})}{hf'(0)} G_b(X_{t_i}-x), \\
	\hat{m}_n(x) &= \frac{1}{n} \sum_{i=1}^n G_b(X_{t_i}-x).
\end{align*}
Here, $G$ is a smoothing kernel, $b>0$ is a bandwidth parameter and $G_b(y)=b^{-1} G(y/b)$. 
The function $f$ is a design parameter, and we assume the following regularity. 

\begin{itemize}
\item[\textbf{(F')}] The function $f:\R\to\R$ is bounded, odd, $f\in\mathcal{C}^4$, $f(0)=0$, $f'(0)\neq 0$, and the derivatives $f^{(k)}$ decay faster than polynomially for $k=1,2,3,4$, i.e.\ $\|f^{(k)}\|_{\infty, p}<\infty$ for all $p>0$.
\end{itemize}

By choosing $f$ sufficiently regular and bounded, we are also able to handle conditional variances via Proposition \ref{prop:cond-approx}.
In particular, the variance of $\hat{\mu}_n(x)$ is finite although the increment $X_{t_{i+1}}-X_{t_i}$ might be heavy-tailed.

Furthermore, we need some technical assumptions on the kernel function $G$ and the rate $b$, as well as ergodicity of the process $X_t$.

\begin{itemize}
	\item[\textbf{(K1)}] $G:\R\to [0,\infty)$ is bounded and compactly supported, $G(y)=0$ for $|y|>1$, and $\int_{-1}^1 G(y)dy =1$.
	\item[\textbf{(K2)}] $X_t$ is stationary with density $X_t\sim m$, and geometrically $\alpha$-mixing. The initial value $X_0\sim m$ is independent of the Brownian motion $B$ and the Poisson counting measure $N$.
\end{itemize}

The geometric ergodicity of (K2) is required to ensure consistency of the kernel density estimator $\hat{m}_n(x)\to m(x)$. For sufficient conditions in terms of $\mu,\sigma$ and $c$, and for a definition of the $\alpha$-mixing coefficients, we refer to \cite{masuda2007ergodicity}. 
See also Section \ref{sec:ergodicity}.

\begin{theorem}\label{thm:drift-clt}
Let (J1), (J2), (D), (V), (K1) and (K2) hold, and $f$ satisfy (F'). If $b\to 0, h\to 0$ and $Tb\to\infty$ as $n\to\infty$, then \begin{align*}
	|\hat{\mu}_n(x)-\mu(x)| = \mathcal{O}_P\left(\frac{1}{\sqrt{Tb}}\vee b \vee h\right).
\end{align*} 
If furthermore $Tb^3\to 0, Tbh^2\to 0$, then \begin{align*}
	\sqrt{Tb} (\hat{\mu}_n(x)-\mu(x)) \wconv \mathcal{N}\left( 0, \frac{\mathcal{A}^*f^2(x)}{f'(0)^2} \frac{\int G^2(y)dy}{m(x)} \right).
\end{align*}
\end{theorem}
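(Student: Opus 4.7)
The plan is to split $\hat\mu_n(x)-\mu(x)$ into a bias term and a martingale term, and apply a martingale CLT for triangular arrays to the latter. Writing $\Delta_i X = X_{t_{i+1}}-X_{t_i}$ and $\hat N_n(x) = (nh f'(0))^{-1}\sum_i f(\Delta_i X) G_b(X_{t_i}-x)$, I use the identity
\[
\hat{\mu}_n(x) - \mu(x) = \hat{m}_n(x)^{-1}\bigl[\hat{N}_n(x) - \mu(x)\hat{m}_n(x)\bigr]
\]
and condition on $\mathcal{F}_{t_i}$ to decompose the bracket into a predictable bias part $B_n$ and a martingale-difference part $S_n$, where $\xi_i := f(\Delta_i X) - E[f(\Delta_i X)|\mathcal{F}_{t_i}]$ and
\[
S_n = \frac{1}{nhf'(0)}\sum_{i=1}^n G_b(X_{t_i}-x)\,\xi_i.
\]

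For $B_n$, Proposition 2.1 combined with the odd symmetry of $f$, the symmetry of the jump density (which kills the integral term in $\mathcal{A}^*f$), and $f''(0)=0$, gives $E[f(\Delta_i X)|X_{t_i}] = h\mu(X_{t_i})f'(0) + r_i$ with $|r_i|\leq h^2\|\mathcal{A}^2\tau_{X_{t_i}}f\|_\infty$. Applying Theorem 2.3 to $\tau_{X_{t_i}}f$ together with the shift bound $\|\tau_{X_{t_i}}f\|_{\infty,p}\leq 2^p(|X_{t_i}|\vee 1)^p\|f\|_{\infty,p}$ and the rapid-decay hypothesis (F') on $f^{(k)}$, the factor is polynomial in $|X_{t_i}|$, but on the kernel support $|X_{t_i}-x|\leq b$ it is uniformly bounded. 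Hence $h^{-1}E[f(\Delta_i X)|X_{t_i}]/f'(0) = \mu(X_{t_i}) + O(h)$ uniformly on the support of $G_b(\cdot-x)$. Ergodicity (K2) then gives $\hat{m}_n(x)\to m(x)$ and $n^{-1}\sum_i G_b(X_{t_i}-x)\mu(X_{t_i}) \to \int G_b(y-x)\mu(y)m(y)\,dy = \mu(x)m(x) + O(b)$, yielding $B_n = O_P(b\vee h)$ and hence the rate statement once $S_n = O_P(1/\sqrt{Tb})$ is established.

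For the CLT under $Tb^3\to 0$ and $Tbh^2\to 0$, the bias is $o((Tb)^{-1/2})$, so it remains to show $\sqrt{Tb}\,S_n \wconv \mathcal{N}(0,V)$ with $V = m(x)\mathcal{A}^*f^2(x)\int G^2/f'(0)^2$. I apply a martingale CLT for discrete-time arrays (e.g.\ Hall–Heyde or Jacod–Shiryaev IX.7.28) to $\zeta_{n,i} := \sqrt{Tb}\,G_b(X_{t_i}-x)\xi_i/(nhf'(0))$. The conditional-variance condition follows from Proposition 2.1, which gives $\Var[f(\Delta_i X)|\mathcal{F}_{t_i}] = h\mathcal{A}^*f^2(X_{t_i}) + O(h^2)$ (where the remainder is again controlled on the kernel support via Theorem 2.3 applied to $f^2$), so that
\[
\sum_i E[\zeta_{n,i}^2|\mathcal{F}_{t_i}] = \frac{b}{n f'(0)^2}\sum_i G_b^2(X_{t_i}-x)\mathcal{A}^*f^2(X_{t_i}) + o_P(1) \;\plim\; \frac{m(x)\mathcal{A}^*f^2(x)\int G^2}{f'(0)^2},
\]
using geometric $\alpha$-mixing from (K2) with a Davydov-type covariance inequality to justify the ergodic limit of the bandwidth-dependent functional. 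The conditional Lindeberg condition is immediate since $f$ is bounded and $G_b\leq \|G\|_\infty/b$, giving $|\zeta_{n,i}|\leq C\sqrt{Tb}/(nhb) = o(1)$ under $Tbh^2\to 0$. Dividing by $\hat m_n(x)\to m(x)$ yields the stated asymptotic variance.

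The main obstacle is the uniform control of the Taylor remainders $\|\mathcal{A}^2\tau_{X_{t_i}}f\|_\infty$ and $\|\mathcal{A}^2\tau_{X_{t_i}}f^2\|_\infty$ when $X_{t_i}$ is unbounded: this is exactly what the weighted-norm machinery of Section 2 and the rapid decay (F') are designed to handle, and it is only because the kernel localises $X_{t_i}$ to a neighbourhood of $x$ that these remainders become genuinely negligible. A secondary technical point is the replacement of sample averages of $b$-dependent functionals by their stationary expectations at rates faster than $1/\sqrt{Tb}$; this is standard for geometrically mixing processes but must be carried out carefully for both the variance sum and the Lindeberg-type tail sum.
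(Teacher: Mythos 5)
Your proposal is correct and follows essentially the same route as the paper's proof: the same martingale-plus-bias decomposition, the same use of Proposition \ref{prop:cond-approx} together with Theorem \ref{thm:A2full} and the shift bound to control the remainders on the kernel support, the same martingale CLT with Lindeberg verified via boundedness of $f$ and $G_b$, and the same Slutsky step for $\hat m_n(x)$. One trivial slip: the Lindeberg bound $|\zeta_{n,i}|\leq C\sqrt{Tb}/(nhb)=C(Tb)^{-1/2}$ tends to zero because $Tb\to\infty$, not because of $Tbh^2\to 0$.
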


Note that for our choice of $f$, i.e.\ odd such that $f(0)=0$, \begin{align*}
	\mathcal{A}^*f^2(x) &= \frac{\sigma^2(x)}{2} f'(0)^2 + \int f^2(z)\rho(x,z)\,dz \\
	&= \frac{\sigma^2(x)}{2} f'(0)^2 + \mathcal{J}^*f^2(x).
\end{align*}
The first term is due to the diffusion component, while the second term represents the variance due to the jump component. 
It is possible to eliminate the effect of the jumps asymptotically.
To this end, introduce an additional scaling factor $u$ and consider the drift estimator based on $f_u(\cdot) = f(u\,\cdot)$. 
By Theorem \ref{thm:drift-clt}, the asymptotic variance of $\hat{\mu}_n(x)$ is then determined by 
\begin{align*}
	\mathcal{A}^*f_u^2(x)/f_u'(0)^2
	&= (\sigma^2(x) (f^2)''(0)/2 + \mathcal{J}^*f^2_u(0)/u^2)/f'(0)^2 \\
	&= \sigma^2(x) + \frac{\mathcal{J}^*f^2_u(0)}{u^2f'(0)^2}.
\end{align*}
It can be shown that our assumptions on $\rho$ imply that $\mathcal{J}^*f^2_u(0)=\mathcal{O}(u^\alpha)$, such that the contribution of the jumps vanishes if $u\to\infty$. 
This leads to a reduction of the asymptotic variance. 
As Theorem \ref{thm:drift-clt} does not allow for a sequence of functions $f_{u_n}$, we slightly adopt its proof to obtain the following result.

\begin{theorem}\label{thm:drift-clt-filtered}
	Suppose that the conditions of Theorem \ref{thm:drift-clt} hold with $Tb^3\to\infty, Tbh^2\to 0$. 
	For a sequence $u=u_n\to\infty$ such that $Tbh^2u^8\to 0$, denote 
	\begin{align*}
		\hat{\mu}^*_n(x) &= \frac{1}{\hat{m}_n(x)}\frac{1}{n} \sum_{i=1}^n \frac{f_{u_n}(X_{t_{i+1}} - X_{t_i})}{hu_nf'(0)} G_b(X_{t_i}-x).
	\end{align*}
	Then 
	\begin{align*}
		\sqrt{Tb} (\hat{\mu}^*_n(x)-\mu(x)) \wconv \mathcal{N}\left( 0, \sigma^2(x) \frac{\int G^2(y)dy}{m(x)} \right).
	\end{align*}
\end{theorem}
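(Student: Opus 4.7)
The plan is to adapt the proof of Theorem~\ref{thm:drift-clt} while tracking the extra powers of $u_n$ that arise when the fixed $f$ is replaced by the sequence $f_{u_n}(\cdot)=f(u_n\,\cdot)$. The starting point is Proposition~\ref{prop:cond-approx}. Because $f$ is odd with $f(0)=0$ and $\rho$ is symmetric in $z$ under (J1), the jump integral in $\mathcal{A}^*f_{u_n}(y)$ vanishes and $(f_{u_n})''(0)=u_n^2 f''(0)=0$, so $\mathcal{A}^*f_{u_n}(y)=u_n f'(0)\mu(y)$. Dividing the one-step conditional expectation by $hu_n f'(0)$ therefore returns $\mu(y)$ up to an error bounded by $h\|\mathcal{A}^2\tau_y f_{u_n}\|_\infty/(u_n|f'(0)|)$. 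Since differentiation of $f_{u_n}$ multiplies each derivative by $u_n$, i.e.\ $\|(f_{u_n})^{(k)}\|_{\infty,p}\le u_n^k\|f^{(k)}\|_{\infty,p}$, Theorem~\ref{thm:A2full} together with the shift inequality gives $\|\mathcal{A}^2\tau_y f_{u_n}\|_\infty\lesssim u_n^4(|y|\vee 1)^p$ for an exponent $p$ determined by (D), (V), (J2). After kernel localization to $|y-x|\le b$, this contributes a bias of order $h u_n^3$ to $\hat\mu^*_n(x)$, negligible relative to $(Tb)^{-1/2}$ under the stated growth condition.

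For the variance, I apply Proposition~\ref{prop:cond-approx} to $f_{u_n}^2$. Here $(f_{u_n}^2)''(0)=2u_n^2 f'(0)^2$ and the jump contribution obeys $\mathcal{J}^*f_{u_n}^2(y)=\int f(u_n z)^2\rho(y,z)\,dz=O(u_n^\alpha)$ uniformly in $y$, by the scaling change of variables $w=u_n z$ together with the bound $\rho(y,z)\lesssim|z|^{-1-\alpha}$ from (J2) and the fast decay of $f^2$. Consequently
\begin{align*}
\mathcal{A}^*f_{u_n}^2(y)/(u_n f'(0))^2 \;=\; \sigma^2(y)+O(u_n^{\alpha-2}) \;\longrightarrow\; \sigma^2(y),
\end{align*}
and the same Nadaraya--Watson variance computation as in Theorem~\ref{thm:drift-clt} delivers the asymptotic variance $\sigma^2(x)\int G^2(y)\,dy/m(x)$, stripped of the $\mathcal{J}^*f^2/f'(0)^2$ term that was present in the unfiltered case.

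The central limit step then reuses the $\alpha$-mixing CLT used to prove Theorem~\ref{thm:drift-clt} applied to the triangular array $Y_{n,i}=f_{u_n}(X_{t_{i+1}}-X_{t_i})\,G_b(X_{t_i}-x)/(hu_n f'(0))$. The only new work is to verify that the remainders arising from the sequence of functions $f_{u_n}$ and $f_{u_n}^2$ in Proposition~\ref{prop:cond-approx} are negligible: the three variance-remainder contributions are of order $h^2 u_n^4$, $h^2 u_n^2$ and $h^4 u_n^8$, the last coming from $\|\mathcal{A}^2\tau_y f_{u_n}\|_\infty^2$. Once these are rescaled by $(hu_n f'(0))^{-2}$, integrated against $G_b^2$ (which contributes $b^{-1}$) and summed over $n=T/h$ increments, together with the squared bias $(hu_n^3)^2$, they are all dominated by the single hypothesis $Tbh^2 u_n^8\to 0$. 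The main obstacle is precisely this bookkeeping: every application of Theorem~\ref{thm:A2full} or (J2) to $f_{u_n}$ or $f_{u_n}^2$ introduces new powers of $u_n$, and one must check that the single growth restriction controls the bias, the variance correction and the Lindeberg-type remainder simultaneously. Once this accounting is carried out, both the conditional mean and conditional variance approximations lose their jump dependence in the limit, and the mixing CLT underlying Theorem~\ref{thm:drift-clt} goes through verbatim to yield the stated normal limit.
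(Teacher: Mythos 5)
Your proposal is correct and follows essentially the same route as the paper: the same bias/martingale decomposition, the same use of Proposition \ref{prop:cond-approx} and Theorem \ref{thm:A2full} with bookkeeping of the powers of $u_n$ (your bias bound $hu_n^3$ is in fact slightly sharper than the paper's $\mathcal{O}(hu_n^4)$, and both are covered by $Tbh^2u^8\to 0$), and the same key scaling fact $\mathcal{J}^*f_{u}^2(x)=\mathcal{O}(u^{\alpha})$, which is exactly the paper's Lemma \ref{lem:drift-scale}. One small correction: the limit theorem invoked is the martingale (triangular-array) CLT with a Lindeberg condition, not an $\alpha$-mixing CLT --- mixing is used only for the consistency of $\hat{m}_n(x)$ --- but since you verify precisely the conditional-variance convergence and Lindeberg condition, this is a naming slip rather than a gap.
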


Note that the sequence $u_n$ may tend to infinity arbitrarily slowly, such that it can always be chosen in accordance with Theorem \ref{thm:drift-clt-filtered}, e.g.\ as $u=(Tbh^2)^{1/16}$.
Hence, we find that the rate of convergence and the asymptotic variance of the estimator $\hat{\mu}_n^*(x)$ is the same as for the Gaussian case without jumps, see \cite[Thm.\ 3]{bandi2003fully}.
The same holds true for the thresholded drift estimator of \cite{Mancini2011}, which however requires the jumps to be of finite activity.

The introduction of a scaling sequence $u_n\to\infty$ to reduce the asymptotic variance can be regarded as a smooth analogue of filtering i.e.\ eliminating the jumps. 
This jump-filtering has recently been applied for parametric estimation of the drift functional by \cite{gloter2018jump}, leading to an efficient estimator.
They consider the jump term as a nuisance, and assume $\sigma(x)$ to be known. 
Therein, the authors introduce a truncation sequence $v_n\gg \sqrt{h}$ and consider the truncated increments $(X_{t+h}-X_t)\mathds{1}_{|X_{t+h}-X_t|\leq v_n}$.
To achieve efficiency in this setting, the authors introduce a truncation sequence $v_n\gg \sqrt{h}$ and consider the truncated increments $(X_{t+h}-X_t)\mathds{1}_{|X_{t+h}-X_t|\leq v_n}$.
If we denote $g(y)=y\mathds{1}_{|y|\leq 1}$, the latter truncated increment corresponds to $g_{u_n}(X_{t+h}-X_t)$ with $u_n=1/v_n$.
Note that this truncation is not covered by our analysis since we require $g$ to be smooth.
In contrast to our model, \cite{gloter2018jump} require the jumps to be of finite variation, while allowing them to be non-symmetric. 

A central contribution of \cite{gloter2018jump} is to relax the assumptions on the sampling scheme.
In particular, they require that $nh^p\to 0$ for some $p>1$, and larger values of $p$ represent less restrictive conditions.
The value of $p$ depends on the jump measure, and is at best $p=3-\epsilon$ for some small $\epsilon$ in the case where the jump density $\rho(x,z)$ is bounded in $z$.
If the jumps are of infinite activity, $p$ is smaller.
For our filtered estimator $\hat{\mu}_n^*(x)$, we require $Tbh^2\to 0$.
Neglecting the nonparametric bandwidth, this roughly corresponds to $Th^2=nh^3\to 0$ and is thus less restrictive than the conditions of \cite{gloter2018jump}.
This improvement is only possible because we restrict the jump measure to be symmetric. 

Another advantage of the jump filtered estimator $\hat{\mu}_n^*(x)$ over $\hat{\mu}_n(x)$ is that the asymptotic variance of the former has a much simpler form.
To use Theorem \ref{thm:drift-clt-filtered} in practice, we only need a consistent estimator of the asymptotic variance, i.e., of the spot volatility $\sigma(x)$.
We propose the estimator 
\begin{align}
	\hat{\sigma}_n^2(x) = \frac{1}{\hat{m}_n(x)}\frac{1}{n} \sum_{i=1}^n \frac{f_{u_n}(X_{t_{i+1}} - X_{t_i})}{hu_n^2 f''(0)/2} G_b(X_{t_i}-x),
\end{align}
for a function $f$ which satisfies condition (F'') below.

\begin{itemize}
\item[\textbf{(F'')}] The function $f:\R\to\R$ is bounded, $f\in\mathcal{C}^4$, $f(0)=0$, $f'(0)= 0$, $f''(0)\neq 0$, and the derivatives $f^{(k)}$ decay faster than polynomially for $k=1,2,3,4$, i.e.\ $\|f^{(k)}\|_{\infty, p}<\infty$ for all $p>0$.
\end{itemize}

\begin{theorem}\label{thm:vola-consistent}
	Let (J1), (J2), (D), (V), (K1) and (K2) hold, and $f$ satisfy (F'). If $b\to 0, h\to 0$, $Tb\to\infty$, and $u=u_n\to\infty$ as $n\to\infty$, then
	\begin{align*}
		\hat{\sigma}^2_n(x) \pconv \sigma^2(x).
	\end{align*}
\end{theorem}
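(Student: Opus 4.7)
The plan is to decompose $\hat{\sigma}^2_n(x) = N_n(x)/\hat{m}_n(x)$, where $N_n(x) = \frac{1}{n}\sum_{i=1}^n \xi_i$ with $\xi_i = \frac{f_{u_n}(X_{t_{i+1}}-X_{t_i})}{hu_n^2 f''(0)/2}\, G_b(X_{t_i}-x)$. By (K1), (K2) and standard kernel density estimation for geometrically $\alpha$-mixing stationary sequences, $\hat{m}_n(x)\pconv m(x)$, so by Slutsky it suffices to show $N_n(x)\pconv \sigma^2(x)m(x)$; I prove this by separately bounding bias and variance of $N_n$. (Since the estimator requires $f''(0)\neq 0$, I take the regularity assumption on $f$ to be (F''), which is what the statement presumably intends.)

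For the bias, I apply Proposition \ref{prop:cond-approx} to the rescaled function $f_{u_n}$. Under (F''), $f_{u_n}(0) = f_{u_n}'(0) = 0$ and $f_{u_n}''(0) = u_n^2 f''(0)$, so
$$ \mathcal{A}^* f_{u_n}(y) = \tfrac{1}{2}\sigma^2(y)\, u_n^2 f''(0) + \mathcal{J}^*f_{u_n}(y), \qquad \mathcal{J}^*g(y) = \int g(z)\rho(y,z)\,dz. $$
The jump contribution I control by the change of variables $w=u_n z$, which gives $\mathcal{J}^*f_{u_n}(y) = u_n^{-1}\int f(w)\rho(y,w/u_n)\,dw$; splitting the integration domain at $|w|=u_n$ to invoke the two branches of (J2), together with $|f(w)|\leq C w^2$ near zero (from $f(0)=f'(0)=0$) and the polynomial decay of $f$ at infinity granted by (F''), yields the uniform bound $\mathcal{J}^*f_{u_n}(y) = O(u_n^\alpha)$. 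Consequently $\mathcal{A}^* f_{u_n}(y)/(u_n^2 f''(0)/2) = \sigma^2(y) + O(u_n^{\alpha-2})$. The remainder in Proposition \ref{prop:cond-approx}, bounded via Theorem \ref{thm:A2full} together with $\|(f_{u_n})^{(j)}\|_{\infty,p} = O(u_n^j)$, contributes a bias of order $hu_n^2$ after division by $hu_n^2 f''(0)/2$; as in Theorem \ref{thm:drift-clt-filtered} this requires the implicit tuning $hu_n^2\to 0$. Combined with the kernel smoothing limit $\int\sigma^2(y)m(y)G_b(y-x)\,dy\to\sigma^2(x)m(x)$ as $b\to 0$, this yields $E[N_n(x)]\to\sigma^2(x)m(x)$.

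For the variance, I apply Proposition \ref{prop:cond-approx} to $g=f_{u_n}^2$: since $g(0)=g'(0)=g''(0)=0$, the same scaling argument gives $\mathcal{A}^* f_{u_n}^2(y) = \mathcal{J}^*f_{u_n}^2(y) = O(u_n^\alpha)$ uniformly in $y$, and hence $E[f_{u_n}^2(X_{t+h}-X_t)\mid X_t=y] = O(hu_n^\alpha)$. This gives $E[\xi_i^2] = O(1/(hbu_n^{4-\alpha}))$ via $\|G_b\|_2^2=O(1/b)$, so the diagonal part of $\Var(N_n(x))$ is $O(1/(Tbu_n^{4-\alpha}))\to 0$ using $\alpha<2$ and $Tb,\,u_n\to\infty$. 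The off-diagonal covariances are controlled by Davydov's inequality together with the geometric $\alpha$-mixing from (K2), so $\Var(N_n(x))\to 0$.

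The main obstacle I anticipate is the uniform-in-$y$ scaling estimate $\mathcal{J}^*f_{u_n}(y) = O(u_n^\alpha)$: the splitting of the rescaled integral must carefully invoke the $|z|^{-1-\alpha}$ branch of (J2) on $|z|\leq 1$ and the $|z|^{-1-\tau}$ branch on $|z|>1$ after the substitution, and the double zero of $f$ at the origin is precisely what ensures integrability near $w=0$. A secondary technicality is absorbing the $(|y|\vee 1)^p$ weight factors that arise when applying Theorem \ref{thm:A2full}, which I handle using the compact support of $G_b$ localising $y$ to the neighbourhood $|y-x|\leq b$ of $x$.
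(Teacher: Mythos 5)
Your overall architecture (Slutsky on $\hat m_n$, then analysis of the numerator) and your entire bias analysis coincide with the paper's proof: the paper likewise decomposes $\hat\sigma^2_n(x)-\sigma^2(x)$ into a smoothing bias $B_n=\mathcal{O}_P(b)$, an approximation bias $A_n=\mathcal{O}_P(hu_n^2)+\mathcal{O}_P(u_n^{\alpha-2})$ obtained from Proposition \ref{prop:cond-approx}, Theorem \ref{thm:A2full} and the scaling bound $\sup_y|\mathcal{J}^*f_{u_n}(y)|=\mathcal{O}(u_n^\alpha)$ (proved exactly by your change of variables, as in Lemma \ref{lem:drift-scale}), and a fluctuation term. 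You are also right on two editorial points: the theorem should read (F'') rather than (F'), and the $\mathcal{O}_P(hu_n^2)$ term is only negligible under the implicit tuning $hu_n^2\to 0$, which the paper's hypotheses do not state explicitly either.

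The genuine gap is in your treatment of the stochastic fluctuation. You propose to bound $\Var(N_n)$ of the \emph{uncentered} sum and to control the off-diagonal covariances $\Cov(\xi_i,\xi_j)$ by Davydov's inequality. This does not go through under the stated hypotheses: the mixing coefficient enters at lag $|i-j|h$ with $h\to 0$, so the roughly $n/h$ pairs with $|i-j|\lesssim 1/h$ see no decay from mixing and must be bounded by Cauchy--Schwarz, giving a contribution of order $\frac{1}{n^2}\cdot\frac{n}{h}\cdot E[\xi_0^2]=\mathcal{O}\bigl((Thb\,u_n^{4-\alpha})^{-1}\bigr)$; this requires $nh^2b\to\infty$, which is not implied by $Tb=nhb\to\infty$. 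Moreover $\|\xi_i\|_{2+\delta}$ blows up as $h\to 0$, so the Davydov bound for the remaining lags is also not free. The paper sidesteps all of this by first centering each summand at its conditional mean given $\F_{t_i}$: the centered parts form a martingale-difference array whose cross-covariances vanish exactly, so only the diagonal sum $\frac{1}{n^2}\sum_i\Var(\xi_i\mid\F_{t_i})=\mathcal{O}_P\bigl((Tb\,u_n^{4-\alpha})^{-1}\bigr)$ survives, while the conditional means $E[\xi_i\mid\F_{t_i}]$ are uniformly $\mathcal{O}(G_b(X_{t_i}-x))$ and are absorbed into the (pathwise) bias terms via Lemma \ref{lem:kde-consistency}. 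Replacing your Davydov step by this conditional-centering decomposition repairs the argument and reduces it to the paper's proof.
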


We note that a central limit theorem for $\hat{\sigma}_n^2$ in the present situation, although desirable, is hard to obtain because we allow for jumps of infinite variation. 
For the related problem of estimating the integrated volatility of a semimartingale, the activity of the jumps determines the attainable rate of convergence \citep{jacod2014remark}, and the estimators are typically dominated by a bias term in the infinite variation case.
To obtain a fast rate of convergence and a corresponding central limit theorem, additional assumptions may be imposed, see \cite{jacod2014efficient}.
Although the additional assumptions are very similar to the ones imposed in section \ref{sec:stable-like} below, a full nonparametric treatment of the volatility $\sigma(x)$ in a Markovian setting is out of scope of the present paper.

\section{Stable-like processes}\label{sec:stable-like}

We now proceed to apply the results of section \ref{sec:generator} to a more specialized class of processes. So far, assumption (J2) requires the jump activity index to be smaller than $\alpha<2$. We are interested in processes whose small jumps behave in a certain sense almost identical to those of an $\alpha(x)$-stable L{\'e}vy process. This stable-like behavior is formalized as follows.

\begin{itemize}
	\item[\textbf{(SL1)}] The spot jump intensity measure admits a density $\rho(x,z)$ of the form \begin{align*}
		\rho(x,z)&=\frac{r(x)}{|z|^{1+\alpha(x)}}(1+g(x,z)),\quad |z|\leq 1,
	\end{align*} 
	for a function $g\geq -1$ which is symmetric in $z$, differentiable in $x$, and $|\frac{d^k}{dx^k}g(x,z)| \leq C_g|z|^{\delta(x)}$, $C_g>0$, $0<\delta(x)\leq \alpha(x) <2$, $k=0,1,2$. For $|z|>1$, $|\frac{d^k}{dx^k}\rho(x,z)|\leq C_\rho |z|^{-1-\tau}$, $\tau>1$.
	
	\item[\textbf{(SL2)}] The functions $x\mapsto r(x)$ and $x\mapsto \alpha(x)$ are $\mathcal{C}^2$ and bounded such that $\|r\|_{\infty}$, $\|r'\|_\infty$, $\|r''\|_\infty< \infty$ and $\|\alpha\|_\infty<2, \|\alpha'\|_\infty, \|\alpha''\|_\infty <\infty$. The function $x\mapsto \delta(x)>0$ is continuously differentiable on $\R$.
\end{itemize}

When referring to these assumptions, we will speak of a stable-like process. It can be checked that (J1),(SL1) and (SL2) together imply condition (J2) (see Lemma \ref{lem:SL-T-J}), such that the results of section \ref{sec:generator} apply.
Assumption (SL1) is very similar to the locally stable pure-jump processes studied by \cite{masuda2016non}, where $\alpha(x)$ is constant. These Lévy driven SDEs are contained in our framework, as the following argument shows.
An example with more complex jump dynamics is presented in section \ref{sec:simulations} below.

\begin{example}
	Let $Z_t$ be a symmetric pure-jump L{\'e}vy process with intensity measure $\rho(z)dz$, the density of which is of the form $\rho(z)=(1+g(z))|z|^{-1-\alpha}$ for some $\alpha\in(0,2)$, and $g(x,z)=g(z)$ satisfying (SL1) with $\delta(x)=\delta, r(x)=1$ constant. If we consider the Markov process $dX_t = l(X_{t-}) dZ_t$ for a smooth function $l$ such that $\|l\|_\infty, \|l'\|_\infty, \|l''\|_\infty < \infty$ and $l(x)\geq l_0>0$, the corresponding jump intensity measure has the density $\rho(x,z)=\rho(z/l(x))/l(x)$, which can be written as $\rho(x,z)=l(x)^\alpha |z|^{-1-\alpha}(1+g(z/l(x)))$. Since $l(x)$ is bounded from below, the function $x\mapsto l(x)^\alpha$ is as smooth as required by (SL2). To ensure the smoothness of $g(x,z)=g(z/l(x))$ as in (SL1), we need an additional requirement. A sufficient condition is $|g(z)| + |zg'(z)| + |z^2g''(z)| \leq C_g |z|^{\delta}$ for $|z|\leq 1$ and $|g(z)| + |zg'(z)| + |z^2g''(z)| \leq C_g |z|^{\alpha-\tau}$ for $|z|> 1$.
\end{example}

The similarity of a stable-like process to an $\alpha$-stable L{\'e}vy process is underscored by an investigation of the jump part of the generator term $\mathcal{A}^*f(x)$, which is given by \begin{align*}
	\mathcal{J}^*f(x) = \int_{\R} \left[f(z)-f(0)-f'(0)z\mathds{1}_{|z|\leq 1}\right] \rho(x,z)dz.
\end{align*}
If $\rho(x,z)=|z|^{-1-\alpha(x)}$, we define the expression \begin{align}
	\label{eqn:fracderiv}f^{[\alpha(x)]}(y) = \int_{\R} \frac{f(y+z)-f(y)-f'(y)z\mathds{1}_{|z|\leq 1}}{|z|^{1+\alpha(x)}}dz = \frac{1}{B_{\alpha(x)}} f^{(\alpha(x))}(y).
\end{align}
The term $f^{[\alpha(x)]}(y)$ is finite for $\alpha(x)\in (0,2)$, since a Taylor expansion of the integrand yields $|f^{[\alpha(x)]}(y)| \leq 2\|f\|_{\infty}/\alpha(x) + 2\|f''\|_\infty/(2-\alpha(x))$ for any $f\in \mathcal{C}^2$.

For rescaled functions $f_u(x)=f(ux)$, formula \eqref{eqn:fracderiv} yields the scaling behavior $f^{[\alpha]}_u(0)=u^\alpha f^{[\alpha]}(0)$. As $u\to\infty$, the same behavior holds true asymptotically for $\mathcal{J}^*f_u(x)$ for stable-like processes.
 
\begin{lemma}\label{lem:locstable-generator}
	If (J1) and (SL1) hold, then for $u\geq 1$, \begin{align*}
		\left|\mathcal{J}^*f_u(x) -  u^{\alpha(x)} r(x) f^{[\alpha(x)]}(0)\right|\leq K_x \tilde{C}_\rho  \left( \|f''\|_{\infty, 0} + \|f\|_{\infty, 0} \right)u^{\alpha(x)-\delta(x)},
	\end{align*}
	where $K_x = \frac{(r(x)+1)}{(2-\alpha(x))(\alpha(x)-\delta(x))}$.
\end{lemma}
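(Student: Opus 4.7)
The plan is to reduce the difference to a single integral and then split the range of $z$ into three regions. First I would establish the rescaling identity $u^{\alpha(x)} f^{[\alpha(x)]}(0) = f_u^{[\alpha(x)]}(0)$ via the substitution $w = uz$ in the definition \eqref{eqn:fracderiv}. The only subtlety is that the truncation changes from $\{|w|\le 1\}$ to $\{|w|\le u\}$, but the discrepancy integrates to zero since $w/|w|^{1+\alpha(x)}$ is odd. Setting $\psi(z) = f_u(z) - f_u(0) - f_u'(0) z \1_{|z|\le 1}$, the quantity of interest becomes
\[
\mathcal{J}^*f_u(x) - r(x) u^{\alpha(x)} f^{[\alpha(x)]}(0) = \int_{\R} \psi(z)\bigl[\rho(x,z) - r(x) |z|^{-1-\alpha(x)}\bigr]\,dz.
\]

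By the symmetry of $\rho(x,\cdot)$ under (J1) and of the kernel $|z|^{-1-\alpha(x)}$, only the even part $\psi_e(z) := \tfrac{1}{2}[f_u(z)+f_u(-z)] - f_u(0)$ contributes, which conveniently removes the awkward truncation term in $\psi$. A second-order Taylor expansion then yields the two-sided bound $|\psi_e(z)| \le \min\bigl(\tfrac{1}{2} (uz)^2 \|f''\|_\infty,\, 2\|f\|_\infty\bigr)$. I would then split the range of integration into $\{|z|\le 1/u\}$, $\{1/u < |z| \le 1\}$, and $\{|z|>1\}$. On $\{|z|\le 1\}$, (SL1) gives $\rho(x,z) - r(x)|z|^{-1-\alpha(x)} = r(x) |z|^{-1-\alpha(x)} g(x,z)$ with $|g(x,z)|\le C_g |z|^{\delta(x)}$; for $|z|\le 1/u$ the Taylor bound on $\psi_e$ combined with the convergent integral $\int_0^{1/u} z^{\delta(x)+1-\alpha(x)}\,dz$ contributes, after the $u^2$ prefactor, a term of order $r(x) C_g \|f''\|_\infty u^{\alpha(x)-\delta(x)}/(\delta(x)+2-\alpha(x))$; for $1/u<|z|\le 1$ the uniform bound on $\psi_e$ together with $\int_{1/u}^1 z^{\delta(x)-1-\alpha(x)}\,dz$ yields, under $\delta(x)<\alpha(x)$, a term of order $r(x) C_g \|f\|_\infty u^{\alpha(x)-\delta(x)}/(\alpha(x)-\delta(x))$. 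On $\{|z|>1\}$, the uniform bound on $\psi_e$ combined with the tail control $|\rho(x,z)|\le C_\rho |z|^{-1-\tau}$ and the integrable $r(x)|z|^{-1-\alpha(x)}$ gives a constant contribution, which is dominated by $u^{\alpha(x)-\delta(x)}$ because $u\ge 1$.

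Summing the three pieces, using $\delta(x)\ge 0$ to bound $(\delta(x)+2-\alpha(x))^{-1} \le (2-\alpha(x))^{-1}$, and absorbing all numerical constants and $r(x)\le \|r\|_\infty$ into a single $\tilde{C}_\rho$, yields the claimed bound with leading constant of order $(r(x)+1)/[(2-\alpha(x))(\alpha(x)-\delta(x))]$. The main obstacle is purely bookkeeping: matching the three regional constants to the target form $K_x$ and using $u\ge 1$ to absorb the $|z|>1$ tail (which lacks an ideal $u^{\alpha(x)-\delta(x)}$ factor) into the main contributions. No delicate singular-integral analysis is required beyond the symmetry-plus-Taylor reduction and the careful choice of the cutoff $|z|\sim 1/u$, which balances the Taylor bound against the uniform bound exactly at the level producing the $u^{\alpha(x)-\delta(x)}$ scaling.
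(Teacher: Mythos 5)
Your proposal is correct and follows essentially the same route as the paper: the rescaling identity $f_u^{[\alpha(x)]}(0)=u^{\alpha(x)}f^{[\alpha(x)]}(0)$, the reduction to $\int\psi(z)[\rho(x,z)-r(x)|z|^{-1-\alpha(x)}]\,dz$, and a Taylor-versus-uniform split of the integral with cutoff at $|z|\sim 1/u$ (the paper performs the substitution $z\mapsto uz$ first, so its split at $|z|\le 1$ versus $1<|z|\le u$ is your split in rescaled variables). Your symmetrization of $\psi$ to eliminate the truncation term is a mildly cleaner variant of the paper's shift of the indicator threshold, which likewise rests on the symmetry of the jump density.
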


Together with Proposition \ref{prop:cond-approx} and Theorem \ref{thm:A2full}, we obtain an approximation of the conditional expectation and variance of the increments $f_u(X_{t+h}-X_t)$, for the following class of functions.

\begin{itemize}
	\item[\textbf{(F)}] The function $f:\R\to\R$ is bounded, $f\in\mathcal{C}^4$, $0=f(0)=f'(0)=f''(0)$ and the derivatives $f^{(k)}$ decay faster than polynomially for $k=1,2,3,4$.
\end{itemize}

\begin{theorem}\label{thm:cond-approx-SL}
Let (J1), (D), (V), (SL1) and (SL2) hold, and let $u\geq 1$. There exists a constant $\tilde{K}_x$ and for any function $f$ satisfying (F), a constant $\tilde{K}_f<\infty$, such that
	\begin{align*}
		\left|E(f_u(X_{t+h}-X_t)|X_t=x) - h u^{\alpha(x)}r(x)f^{[\alpha(x)]}(0) \right| &\leq \tilde{K}_f\tilde{K}_x  \left( h^2u^4  + hu^{\alpha(x)-\delta(x)} \right),\\
		\left|\Var(f_u(X_{t+h}-X_t)|X_t=x) - h u^{\alpha(x)} r(x) (f^2)^{[\alpha(x)]}(0) \right| &\leq \tilde{K}_f\tilde{K}_x  \left( h^2u^4 + hu^{\alpha(x)-\delta(x)} +  h^4u^8 \right).
	\end{align*}
	The constant $\tilde{K}_x$ satisfies $\tilde{K}_x= \frac{(|x|\vee 1)^{2q}}{\alpha(x)^2(\alpha(x)-\delta(x))^2}$ for $q=(\tau\vee p_D \vee p_V)+1$.
\end{theorem}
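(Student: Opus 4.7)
The strategy is to feed the rescaled function $f_u$ (and, for the variance, $f_u^2$) into Proposition \ref{prop:cond-approx}, identify the leading-order piece of $\mathcal{A}^* f_u(x)$ via Lemma \ref{lem:locstable-generator}, and control the second-order remainders through Theorem \ref{thm:A2full}. Condition (F) is tailored so that the drift and diffusion parts of $\mathcal{A}^* f_u(x)$ drop out: from $f(0) = f'(0) = f''(0) = 0$ and $f_u^{(k)}(0) = u^k f^{(k)}(0)$ we get $f_u'(0) = f_u''(0) = 0$, and the Leibniz rule further gives $(f_u^2)'(0) = (f_u^2)''(0) = 0$. Consequently, \eqref{eqn:Astardef} collapses to $\mathcal{A}^* f_u(x) = \mathcal{J}^* f_u(x)$ and $\mathcal{A}^* f_u^2(x) = \mathcal{J}^* f_u^2(x)$, isolating the stable-like jump contribution.

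Lemma \ref{lem:locstable-generator} then replaces $\mathcal{J}^* f_u(x)$ by $u^{\alpha(x)} r(x) f^{[\alpha(x)]}(0)$, up to an error of the form $K_x(\|f\|_\infty + \|f''\|_\infty) u^{\alpha(x)-\delta(x)}$ with $K_x = (r(x)+1)/[(2-\alpha(x))(\alpha(x)-\delta(x))]$; the same estimate applies to $f^2$. For the second-order remainders I would invoke Theorem \ref{thm:A2full} for $\tau_x f_u$ and $\tau_x f_u^2$. Using the pointwise identity $(\tau_x f_u)^{(k)}(y) = u^k f^{(k)}(u(y-x))$, together with the scaling bound $\|f_u^{(k)}\|_{\infty, p} \leq u^k \|f^{(k)}\|_{\infty, p}$ (valid for $u \geq 1$ because $(|y/u|\vee 1)^p \leq (|y|\vee 1)^p$) and the shift bound $\|\tau_x g\|_{\infty, p} \leq 2^p (|x|\vee 1)^p \|g\|_{\infty, p}$ from Lemma \ref{lem:normsup}, the top-order contribution comes from the fourth derivative and yields
\[
\|\mathcal{A}^2 \tau_x f_u\|_{\infty, 0} \;\leq\; C_f \, u^4 \, (|x|\vee 1)^q, \qquad q = (\tau \vee p_V \vee p_D) + 1,
\]
and analogously $\|\mathcal{A}^2 \tau_x f_u^2\|_{\infty, 0} \leq C_{f^2} u^4 (|x|\vee 1)^q$, with constants $C_f, C_{f^2}$ finite thanks to the rapid polynomial decay of $f^{(k)}$ imposed by (F) (which transfers to $(f^2)^{(k)}$ by the Leibniz rule).

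The triangle inequality then delivers the two claims. For the conditional expectation, Proposition \ref{prop:cond-approx} contributes $h^2\|\mathcal{A}^2 \tau_x f_u\|_\infty \lesssim h^2 u^4 (|x|\vee 1)^q$, and Lemma \ref{lem:locstable-generator} contributes $h\cdot K_x \cdot u^{\alpha(x)-\delta(x)}$; together, and after absorbing $(|x|\vee 1)^q \leq (|x|\vee 1)^{2q}$, these fit into $\tilde{K}_f\tilde{K}_x(h^2 u^4 + h u^{\alpha(x)-\delta(x)})$. For the variance, Proposition \ref{prop:cond-approx} additionally produces $2 h^2 |\mathcal{A}^* f_u(x)|^2$ and $2 h^4 \|\mathcal{A}^2 \tau_x f_u\|_\infty^2$: the first is bounded by $h^2 u^{2\alpha(x)} \leq h^2 u^4$ via Lemma \ref{lem:locstable-generator}, while the second gives $h^4 u^8 (|x|\vee 1)^{2q}$, which is precisely where the squared exponent in $\tilde{K}_x$ originates. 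The main obstacle is not any single estimate but the bookkeeping: aligning all the $u$-powers, the $|x|$-weights, and the $\alpha(x), \delta(x)$-dependent denominators so that every error term fits inside a single constant of the form $\tilde{K}_f\tilde{K}_x$, with $\tilde{K}_x$ of the claimed shape; in particular, the squared weight $(|x|\vee 1)^{2q}$ is forced only by the $h^4 u^8$-term in the variance expansion.
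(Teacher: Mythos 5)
Your proposal is correct and follows essentially the same route as the paper: condition (F) kills the drift and diffusion terms so that $\mathcal{A}^*f_u=\mathcal{J}^*f_u$, Lemma \ref{lem:locstable-generator} supplies the leading term and the $hu^{\alpha(x)-\delta(x)}$ bias, Theorem \ref{thm:A2full} with the scaling/shift bounds of Lemma \ref{lem:normsup} gives $\|\mathcal{A}^2\tau_xf_u\|_\infty\lesssim u^4(|x|\vee 1)^q$, and the extra variance terms $h^2|\mathcal{J}^*f_u(x)|^2\lesssim h^2u^{2\alpha(x)}\leq h^2u^4$ and $h^4u^8(|x|\vee 1)^{2q}$ are handled exactly as in the paper. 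Your observation that the exponent $2q$ in $\tilde{K}_x$ is forced by the $h^4u^8$ term matches the paper's bookkeeping.
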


We will now construct an estimator for the state-dependent jump activity index $\alpha(x)$ of a stable-like process. From Theorem \ref{thm:cond-approx-SL}, we know that $\alpha(x)$ can be identified via the scaling behavior of nonlinearly transformed increments $f_u(X_{t+h}-X_t)$. As a first step, we study the following statistic, which is inspired by the Nadaraya-Watson estimator, 

\begin{align*}
	\hat{R}_n(x) &= \frac{1}{\hat{m}_n(x)}\frac{1}{n} \sum_{i=1}^n \frac{f(u_n(X_{t_{i+1}} - X_{t_i}))}{hu^{\alpha(x)}} G_b(X_{t_i}-x) \\
	\hat{m}_n(x) &= \frac{1}{n} \sum_{i=1}^n G_b(X_{t_i}-x).
\end{align*}

It turns out that $\hat{R}_n(x)$ converges to $r(x)f^{[\alpha(x)]}(0)$ in probability. Now we construct a second statistic based on the function $f_\gamma(y)=f(\gamma y)$ for some $\gamma\neq 1$, given by 
\begin{align*}
	\hat{R}_n(x,\gamma) &= \frac{1}{\hat{m}_n(x)}\frac{1}{n} \sum_{i=1}^n \frac{f_\gamma(u_n(X_{t_{i+1}} - X_{t_i}))}{hu^{\alpha(x)}} G_b(X_{t_i}-x).
\end{align*}
Then $\hat{R}_n(x,\gamma)$ will converge to $r(x)f_\gamma^{[\alpha(x)]}(0) = \gamma^{\alpha(x)}r(x)f^{[\alpha(x)]}(0)$. The ratio of these two statistics is thus approximately $\gamma^{\alpha(x)}$. 
Note also that by taking the ratio $\hat{R}_n(x,\gamma)/\hat{R}_n(x)$, the unknown factor $u^{\alpha(x)}$ cancels, making the ratio a feasible estimator.
Taking the logarithm, we estimate $\alpha(x)$ via 

\begin{align*}
	\hat{\alpha}_n(x) =  \hat{\alpha}_n(x,\gamma) &= -\frac{1}{\log{\gamma}}\log \frac{\hat{R}_n(x)}{\hat{R}_n(x,\gamma)}.
\end{align*}

By using this estimator as a plug-in, we may also consistently estimate $r(x)$ via 
\begin{align*}
	\hat{R}_n^*(x) = \hat{R}_n^*(x,\gamma) = \frac{1}{n f^{[\hat{\alpha}_n(x,\gamma)]}(0)} \sum_{i=1}^n \frac{f(u_n(X_{t_{i+1}}-X_{t_i}))}{hu^{\hat{\alpha}_n(x,\gamma)}} G_b(X_{t_i}-x).
\end{align*}

If we impose the following conditions on $b$, $u$ and the sampling scheme, both estimators $\hat{\alpha}_n(x)$ and $\hat{R}^*_n(x)$ are consistent and asymptotically normal.

\begin{itemize}
	\item[\textbf{(K3)}] $h=h_n\to 0$, $u=u_n\to \infty$ and $b=b_n\to 0$ such that $Tb \to \infty$, and for a sequence $\psi_n\leq\sqrt{Tbu^{\alpha(x)}}$, $\psi_n\to\infty$, \begin{align}
		\label{eqn:rates-Tb}\psi_nhu^{4-\alpha(x)}\to 0,\qquad \psi_n u^{-\delta(x)} \to 0 \qquad \psi_nb \log u \to 0 .
	\end{align}
\end{itemize}

The rate constraints (K3) will yield consistency with rate $\psi_n$ for $\psi_n\leq\sqrt{Tbu^{\alpha(x)}}$, and asymptotic normality if (K3) holds for $\psi_n=\sqrt{Tbu^{\alpha(x)}}$. In the latter case, \eqref{eqn:rates-Tb} reads as \begin{align}
	\label{eqn:rate-clt}Tbh^2u^{8-\alpha(x)}\to 0,\qquad Tb u^{\alpha(x)-2\delta(x)} \to 0 \qquad Tb^3 u^{\alpha(x)} (\log u)^2 \to 0.
\end{align} 
The conditions \eqref{eqn:rates-Tb} control, respectively, the bias due to approximating the conditional expectation by the generator, the bias incurred by approximating the generator only via the small jumps, and the continuity bias due to the kernel smoothing. 
In the simpler case where $X_t$ is a Lévy process and $\alpha$ is constant, only the second bias term would be present.
For the stronger conditions \eqref{eqn:rate-clt} to be feasible, we need $\delta(x)>\frac{\alpha}{2}$. 
Otherwise, the bias of neglecting the lower order behavior of small jumps will dominate the error of estimation. 
This requirement is in line with results on the asymptotic normality for estimators of a constant jump activity index, e.g.\ \cite[Assumption 1(v)]{bull2016near}.

\begin{theorem}\label{thm:alpha-clt}
	Let (J1), (D), (V), (SL1), (SL2) and (K1)-(K3) hold, with $\psi_n=\sqrt{Tbu^{\alpha(x)}}$. Then for any $f$ satisfying (F), and any $\gamma\neq 1$,
	\begin{align*}
			\sqrt{Tb u^{\alpha(x)}} \begin{pmatrix}
			 \hat{\alpha}_n(x,\gamma) - \alpha(x,\gamma)\\ 
			 \frac{\hat{R}_n^*(x,\gamma) -r(x)}{r(x)\log u} 
			\end{pmatrix} \wconv  W \cdot \begin{pmatrix}	1 \\ 1\end{pmatrix},
	\end{align*}
	for a Gaussian random variable $W\sim \mathcal{N}(0, s^2_\gamma(x))$, with asymptotic variance 
	\begin{align*}
			s^2_\gamma(x) = \frac{\int G^2(y)dy}{r(x)m(x) f^{[\alpha(x)]}(0)^2(\log\gamma)^2} \int \frac{(f(z)-\gamma^{-\alpha(x)} f(\gamma z))^2}{|z|^{1+\alpha(x)}}dz.
	\end{align*}
	If instead (K3) holds with $\psi_n=o(\sqrt{Tbu^{\alpha(x)}})$, then $\psi_n|\hat{\alpha}_n(x,\gamma)-\alpha(x)|\pconv 0$. If in the latter case $\psi_n\geq \log u_n$, then $\frac{\psi_n}{\log u_n}|\hat{R}_n^*(x,\gamma)-r(x)| \pconv 0$.
\end{theorem}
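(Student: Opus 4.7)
The plan is to reduce the joint weak convergence to a bivariate CLT for $(\hat R_n(x), \hat R_n(x,\gamma))$ and then transfer it to $\hat\alpha_n$ via the delta method and to $\hat R_n^*$ via a logarithmic linearization. As a first step, for each auxiliary scale $\gamma'\in\{1,\gamma\}$ I would decompose
\begin{align*}
\hat R_n(x,\gamma') - (\gamma')^{\alpha(x)} R_0 = B_n^{(1)}(\gamma') + B_n^{(2)}(\gamma') + \tilde R_n(\gamma'),
\end{align*}
where $R_0 := r(x) f^{[\alpha(x)]}(0)$, $\tilde R_n(\gamma')$ is the centered (martingale-type) remainder of the Nadaraya--Watson sum, $B_n^{(1)}$ absorbs the error of replacing $E[f_{\gamma'}(u_n\Delta_i X) \mid X_{t_i}=y]$ by its leading-order generator approximation $hu^{\alpha(y)} r(y) f_{\gamma'}^{[\alpha(y)]}(0)$ from Theorem \ref{thm:cond-approx-SL}, and $B_n^{(2)}$ is the smoothing bias produced by Taylor-expanding $r(y), \alpha(y)$ and the factor $u^{\alpha(y) - \alpha(x)}$ over the support of $G_b$. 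Theorem \ref{thm:cond-approx-SL} gives $|B_n^{(1)}| = O(hu^{4-\alpha(x)} + u^{-\delta(x)})$, and the $\mathcal{C}^2$-smoothness from (SL2) yields $|B_n^{(2)}| = O(b\log u)$; the three rate conditions in \eqref{eqn:rates-Tb} with $\psi_n = \sqrt{Tbu^{\alpha(x)}}$ are precisely what forces $\psi_n(B_n^{(1)}+B_n^{(2)})\to 0$.

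The next step is a joint CLT for the centered remainders $(\tilde R_n(1), \tilde R_n(\gamma))$. I would compute the conditional second moments using Theorem \ref{thm:cond-approx-SL} together with the analogous expansion
\begin{align*}
\Cov\bigl(f(u_n\Delta X), f_\gamma(u_n\Delta X) \bigm| X_t = y\bigr) = hu^{\alpha(y)} r(y) (ff_\gamma)^{[\alpha(y)]}(0) + O(h^2 u^4 + hu^{\alpha(y)-\delta(y)}),
\end{align*}
which follows by the same Itô-type argument applied to the product $f\cdot f_\gamma$. Localizing by $G_b$ around $x$ and invoking a CLT for geometrically $\alpha$-mixing stationary sequences (available under (K2)) then yields
\begin{align*}
\sqrt{Tbu^{\alpha(x)}} \bigl( \hat R_n(x) - R_0,\; \hat R_n(x,\gamma) - \gamma^{\alpha(x)} R_0 \bigr)^\top \wconv \mathcal{N}(0,\tilde\Sigma),
\end{align*}
with $\tilde\Sigma_{11} = r(x) (f^2)^{[\alpha(x)]}(0) \int G^2 / m(x)$, $\tilde\Sigma_{22} = \gamma^{\alpha(x)} \tilde\Sigma_{11}$, and $\tilde\Sigma_{12} = r(x) (ff_\gamma)^{[\alpha(x)]}(0) \int G^2 / m(x)$.

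Step three is the delta method. Writing $\hat\alpha_n(x,\gamma) - \alpha(x) = (\log\gamma)^{-1}[\log(\hat R_n(x,\gamma)/\gamma^{\alpha(x)} R_0) - \log(\hat R_n(x)/R_0)]$ and linearizing both logarithms yields a Gaussian limit $W := (\log\gamma)^{-1}(Z_2/(\gamma^{\alpha(x)} R_0) - Z_1/R_0)$ with $(Z_1,Z_2)\sim\mathcal{N}(0,\tilde\Sigma)$. Expanding $(f(z) - \gamma^{-\alpha(x)} f(\gamma z))^2$ and integrating against $|z|^{-1-\alpha(x)}$ identifies $\Var(W)$ with the claimed $s_\gamma^2(x)$. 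For $\hat R_n^*(x,\gamma)$ I would use the identity $\hat R_n^*(x,\gamma) = u^{\alpha(x)-\hat\alpha_n(x,\gamma)} \hat R_n(x) / f^{[\hat\alpha_n(x,\gamma)]}(0)$ (after inserting the $1/\hat m_n(x)$ normalization consistent with $\hat R_n$), which gives
\begin{align*}
\log \hat R_n^*(x,\gamma) - \log r(x) = (\alpha(x) - \hat\alpha_n)\log u + [\log \hat R_n(x) - \log R_0] - [\log f^{[\hat\alpha_n]}(0) - \log f^{[\alpha(x)]}(0)].
\end{align*}
Under (K3) the last two bracketed terms are $O_P(1/\sqrt{Tbu^{\alpha(x)}}) = o_P(\log u/\sqrt{Tbu^{\alpha(x)}})$, so the $(\alpha(x)-\hat\alpha_n)\log u$ term is dominant. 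Dividing by $r(x)\log u$ makes $(\hat R_n^*(x,\gamma) - r(x))/(r(x)\log u)$ asymptotically proportional to the same Gaussian that governs $\hat\alpha_n(x,\gamma) - \alpha(x)$, explaining the rank-one structure $W\cdot(1,1)^\top$ of the bivariate limit. The weaker consistency statements under $\psi_n = o(\sqrt{Tbu^{\alpha(x)}})$ do not need the CLT: the bias bounds give $B_n^{(1)} + B_n^{(2)} = o(1/\psi_n)$ from (K3), while the variance bound from step two gives $\tilde R_n = O_P(1/\sqrt{Tbu^{\alpha(x)}}) = o_P(1/\psi_n)$.

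The hardest step will be the joint CLT in step two: one has to check a Lindeberg condition for the triangular array despite the diverging scale $u_n\to\infty$ and shrinking bandwidth $b\to 0$, handle the cross-terms by a block or covariance-inequality argument exploiting the geometric mixing from (K2), and verify that the conditional-moment expansions of Theorem \ref{thm:cond-approx-SL} are uniform over the shrinking neighborhood of $x$ selected by $G_b$, with error terms that do not get amplified by the powers of $u_n$ appearing in the normalization.
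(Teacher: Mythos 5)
Your proposal is correct and follows the paper's overall architecture (generator-based bias bounds from Theorem \ref{thm:cond-approx-SL}, the rate conditions (K3) killing the two approximation biases and the smoothing bias, a limit theorem for the centered sum, the delta method for the log-ratio, and dominance of the $(\hat\alpha_n-\alpha(x))\log u$ term in the analysis of $\hat R_n^*$), but the central limit step is organized differently. You prove a genuine bivariate CLT for $(\hat R_n(x),\hat R_n(x,\gamma))$, computing the cross-covariance $(ff_\gamma)^{[\alpha(x)]}(0)$ by applying the conditional-moment expansion to the product $f\cdot f_\gamma$ (which satisfies (F) whenever $f$ does), and then push this through a two-dimensional delta method; the resulting quadratic form $(f^2)^{[\alpha(x)]}(0)-2\gamma^{-\alpha(x)}(ff_\gamma)^{[\alpha(x)]}(0)+\gamma^{-2\alpha(x)}(f_\gamma^2)^{[\alpha(x)]}(0)$ indeed collapses to $\int (f(z)-\gamma^{-\alpha(x)}f(\gamma z))^2|z|^{-1-\alpha(x)}dz$, recovering $s^2_\gamma(x)$. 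The paper instead linearizes the two logarithms first and observes that the leading term of $-\log(\gamma)(\hat\alpha_n(x)-\alpha(x))$ is itself a single Nadaraya--Watson statistic of the same form with design function $\tilde f = (f-\gamma^{-\alpha(x)}f_\gamma)/(r(x)f^{[\alpha(x)]}(0))$, so the univariate CLT (Theorem \ref{thm:kernel-clt}) applies verbatim and the variance appears directly in the stated form, with no cross-covariance computation. Both routes are valid; yours is the more systematic one, the paper's is shorter and reuses its univariate lemma. One caution on your ``hardest step'': the joint convergence should be obtained via the Cram\'er--Wold device combined with the martingale CLT for the array of conditionally centered increments (consistent with your own description of the remainder as martingale-type), not via a CLT for stationary mixing sequences. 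The summands form a triangular array of bounded martingale differences with $u_n\to\infty$ and $b\to 0$, for which Lindeberg's condition is immediate from $|\epsilon_i^n|\leq \|f\|_\infty (Tbu^{\alpha(x)})^{-1/2}$; the mixing property (K2) is only needed for the law-of-large-numbers statements $\hat m_n(x)\to m(x)$ and $n^{-1}\sum_i G_b^2(X_{t_i}-x)\to m(x)\int G^2$, and a blocking or covariance-inequality argument for the cross-terms is unnecessary.
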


Note that the asymptotic covariance of Theorem \ref{thm:alpha-clt} is degenerate, as the error due to estimation of $\alpha(x)$ is dominant.
Furthermore, the asymptotic variance depends on the underlying process only via the values $\alpha(x)$ and $r(x)$, such that a plug-in estimator allows for feasible inference.

The asymptotic variance $s_\gamma(x)^2$, up to the factor $m(x)$ due to local smoothing, is identical to the asymptotic variance obtained by \cite{jing2012jump} where $X$ is a general semimartingale and the jump activity index is constant, see also \cite[Ch.\ 11]{ait2014high}. For a fast rate of convergence, it is crucial to choose $u$ as large as possible. In light of the constraint $Tbh^2u^{8-\alpha(x)}\to 0$, we need at least $u_n=o(h^{-\frac{2}{8-\alpha}})$. One option is to choose $u_n=h_n^{-1/4}$, and $b_n$ sufficiently small, leading to a rate of convergence of $\sqrt{Tb_n}h_n^{-\alpha(x)/8}$. 
This situation is analogous to the estimation of a constant jump activity index, where established estimators converge at rate $u_n^{\frac{\alpha}{2}}$ for an analogous scaling sequence $u_n$, see \cite{ait2009estimating,jing2012jump,bull2016near}.
Note that \cite{jing2012jump} require $u_n = o(h^{-\frac{1}{2+\alpha}})$, which is stricter than our requirement if we let $\alpha$ vary in $(1,2)$. Faster rates can be achieved by considering multiple time scales, as done by \cite{bull2016near}, to allow for $u_n=o(h^{-\frac{1}{2}-\epsilon})$ for arbitrary $\epsilon>0$. We choose not to pursue the multi-scale methodology in this paper.
Of course, our rates are not directly comparable to the previous ones, since we are working in a sampling situation where $T\to\infty$, and we need to average locally since $\alpha(x)$ is assumed to be non-constant. Thus, the rate of convergence is additionally affected by the smoothness of the signal, which could in principle be exploited by higher order kernels.

When applying these estimators to finite samples, the choice of the right scaling factor $u$ is rather intricate. To demonstrate this, neglect the factors $T$ and $b$ for a moment. If $u$ is too large, the approximation of the conditional expectation by means of the generator leads to a bias term of order $hu^{4-\alpha(x)}$, and the proportionality factor might be large as it depends on the first four derivatives of $f$. On the other hand, choosing $u$ too small increases the sampling noise, which is of order $u^{-\alpha(x)/2}$. This in turn incurs an additional bias term of order $u^{-\alpha(x)}$, which occurs due to the application of the delta method applied to the logarithm. Thus, the wrong choice of the scaling factor $u$ may lead to a bias in both cases, if $u$ is too large or too small. This is different from e.g.\ the choice of a bandwidth in standard nonparametric regression, where a smaller bandwidth only increases the variance of the estimator. We note that this phenomenon also occurs for other estimators of the jump activity index which are based on the same approach.

\section{Simulation study}\label{sec:simulations}

As an example, we study the model \begin{align*}
	dX_t &= -X_t dt + dB_t + dJ_t + dL_t,
\end{align*}
where $B_t$ is a standard Brownian motion and $L_t$ is a compound Poisson process with intensity $1$ and jump sizes according to a Student's t distribution with parameter $\tau=1.2$ and density which scales like $|x|^{-1-\tau}$ as $|x|\to\infty$. 
The jump process $J_t$ is of the form \eqref{eqn:SDEdef}, with spot jump intensity $\nu_s(dz)=|z|^{-1-\alpha(X_{s-})}dz$ for $\alpha(x)=1.9-\frac{\arctan(x)^2}{\pi^2}\in[1.65,1.9]$. 
Indeed, such a process can be constructed by choosing $\nu(dz)=|z|^{-\alpha(0)}$ and \begin{align*}
	c(x,z)=\text{sign}(z) |z|^\frac{\alpha(0)}{\alpha(x)} \left( \frac{\alpha(0)}{\alpha(x)} \right)^\frac{1}{\alpha(x)}.
\end{align*}
In particular, the Lipschitz continuity of $\alpha(x)$ implies $|c(x,z)-c(y,z)| \leq C|x-y| |z|\log|z|$ for $|z|\leq 1$. Thus, existence of $X_t$ for any initial condition can be ensured by \cite[Thm.\ 6.2.9]{applebaum2009levy}. 
The relevant local quantities for our example are \begin{align*}
	\mu(x)&=-x,&\qquad \sigma(x)=1,\\
	\alpha(x)&=1.9-\frac{\arctan(x)^2}{\pi^2},&\qquad r(x)=1.
\end{align*}

The paths of this process are simulated by means of an Euler scheme.
As a sampling scheme, we choose the time horizon $T=10$ and a sampling frequency of $h=10^{-6}$, while the Euler scheme for simulation uses the finer mesh size $h/4$. In order to reach the stationary distribution, the process is simulated for a burn-in period of $T=5$. We fix the bandwidth at $b=0.5$ and choose $u=(Tbh^2)^{-0.07}$. To estimate the drift, we use the odd design function $f(x)=\int_{0}^{x} \exp(-y^2)dy$ which satisfies (F'). For the jump activity estimator, we use 
\begin{align}
	f(x) &= \begin{cases}
		\exp\left( -\frac{1}{|x|-0.1} \right),& |x|\geq 0.1\\
		0,& |x|<0.1.
	\end{cases} \label{eqn:f-JA}
\end{align}
Figure \ref{fig:MC-alpha} shows the behavior of the estimator $\hat{\alpha}_n(x,\gamma)$ for $\gamma=2$, based on $10^4$ Monte Carlo samples. The dashed lines represent pointwise $25\%$, $50\%$ and $75\%$ quantiles. We find that the asymptotic standard deviation derived in Theorem \ref{thm:alpha-clt} yields confidence sets which match the simulated quantiles reasonably well. However, even for the large sample size of roughly $10^7$ observations, the confidence bands are rather wide. To reduce the standard deviation, we could increase the scaling parameter $u$ of our estimator, at the price of increasing the bias. The results for $u=(Tbh^2)^{-0.08}$ are shown in the right panel of Figure \ref{fig:MC-alpha}, displaying smaller confidence bands but a visible bias. This shows the sensitivity of the estimate with respect to the tuning parameter $u$, as already discussed in section \ref{sec:stable-like}.

\begin{figure}[h]
	\centering
	\includegraphics[width=0.48\textwidth]{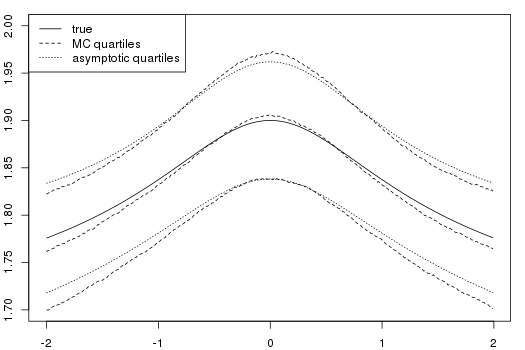}\includegraphics[width=0.48\textwidth]{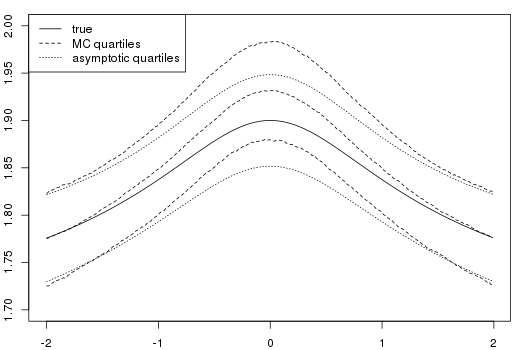}
	\caption{Behavior of $\hat{\alpha}_n(x, 2)$ in comparison with the true value $\alpha(x)$, for the sampling scheme $T=10, h=10^{-6}$. Quartiles are based on $10^4$ Monte Carlo samples. Left: $u=(Tbh^2)^{-0.07}\approx 6.2$, right: $u=(Tbh^2)^{-0.08}\approx 8.0$.}
	\label{fig:MC-alpha}
\end{figure}

A second tuning parameter of our estimator is the factor $\gamma\neq 1$, which affects the asymptotic variance.
The relevant variance factor is given by 
\begin{align*}
	s^2(\gamma,\alpha,f) = \frac{1}{(\log \gamma)^2} \int \frac{(f(z)-\gamma^{-\alpha}f(\gamma z))^2}{|z|^{1+\alpha}}\, dz.
\end{align*} 
In Figure \ref{fig:sgamma}, we depict the value of $s^2(\gamma, \alpha,f)$ as a function of $\alpha$ and $\gamma$, for $f$ as specified in \eqref{eqn:f-JA} below.
For the depicted range of values, increasing $\gamma$ decreases the asymptotic variance. 
To assess the finite sample effect of $\gamma$, we repeat the estimation of $\alpha(x)$ with $\gamma=5$, see Figure \ref{fig:MC-alpha-gammalarge}.
We find that, for the same value $u=(Tbh^2)^{-0.07}$, the estimator incurs a severe bias. 
To reduce the bias, we need to choose a smaller value $u$, which in turn increases the variance of the estimator. 
In the right plot of Figure \ref{fig:MC-alpha-gammalarge}, we show the performance of the estimator for the smaller $u=(Tbh^2)^{-0.05}$, which is very close to the left plot in Figure \ref{fig:MC-alpha}.
This demonstrates that the choice of $\gamma$ should not be based on the value of $s^2(\gamma, \alpha, f)$ alone.

\begin{figure}[tb]
\centering
\includegraphics[width=0.8\textwidth]{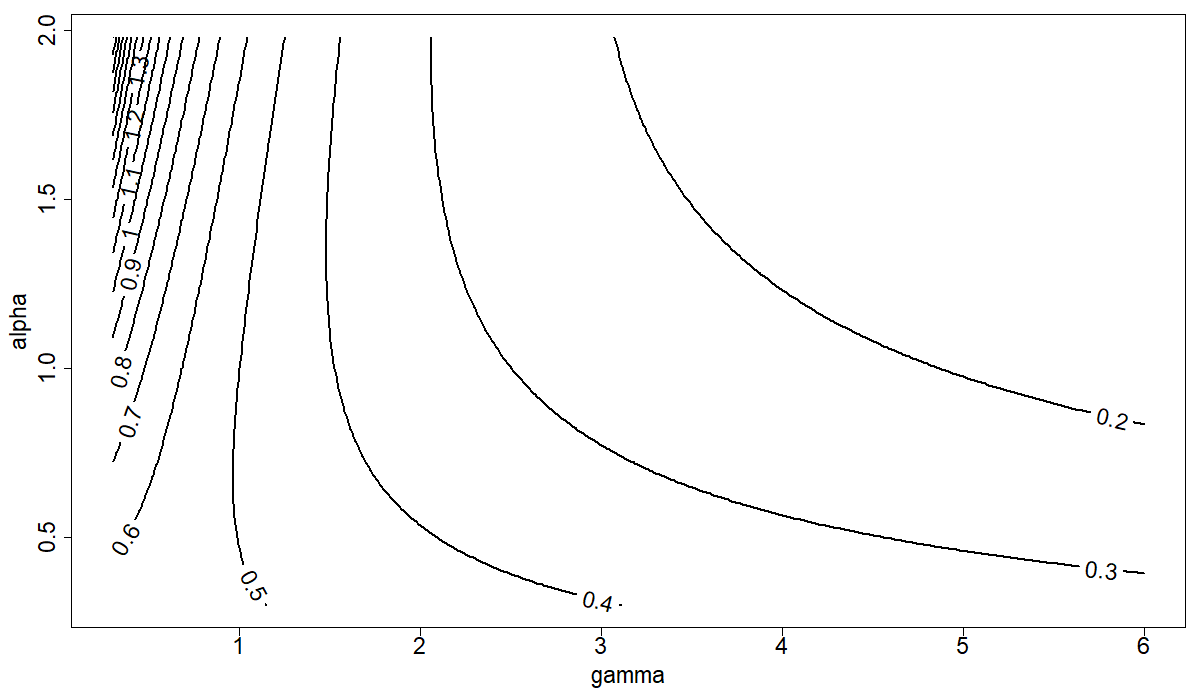}
\caption{Contour plot of the asymptotic variance $s^2(\gamma,\alpha, f)$, as a function of $\gamma$ and $\alpha$, for $f$ as in \eqref{eqn:f-JA}.}
\label{fig:sgamma}
\end{figure}

\begin{figure}[h]
	\centering
	\includegraphics[width=0.48\textwidth]{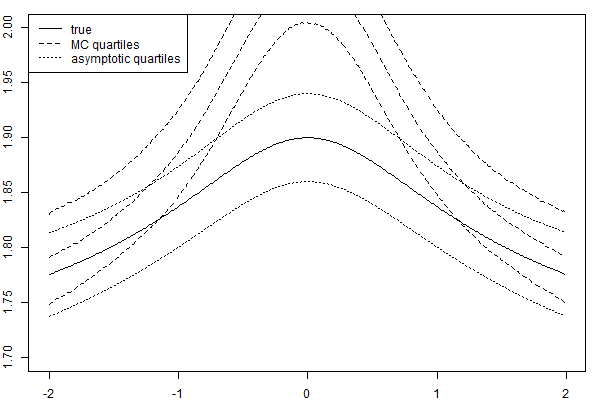}\includegraphics[width=0.48\textwidth]{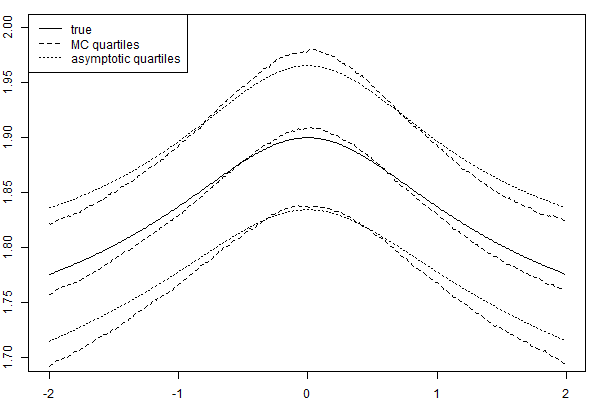}
	\caption{Behavior of $\hat{\alpha}_n(x, 5)$ in comparison with the true value $\alpha(x)$, for the sampling scheme $T=10, h=10^{-6}$. Quartiles are based on $10^4$ Monte Carlo samples. Left: $u=(Tbh^2)^{-0.07}\approx 6.2$, right: $u=(Tbh^2)^{-0.05}\approx 3.7$.}
	\label{fig:MC-alpha-gammalarge}
\end{figure}

In the same sampling scheme, the Monte Carlo results for the drift estimator $\hat{\mu}_n(x)$, i.e.\ with scaling factor $u=1$, are depicted in Figure \ref{fig:MC-mu} (left panel). It is found that the asymptotic distribution approximately matches the empirical performance. However, the estimator has a high variance in this sampling situation. Since the variance of the drift estimator depends on the time horizon $T$, we repeat the simulation for the sampling scheme $T=100, h=10^{-4}$ (Figure \ref{fig:MC-mu}, right panel). Though the total number of observations in this sampling scheme is smaller, the drift estimator is found to be more accurate.  

The variance of the drift estimator can also be decreased by employing the jump-filtered estimator $\hat{\mu}_n^*(x)$. 
In Figure \ref{fig:MC-mu-filtered}, we present the simulation results for $\hat{\mu}_n^*(x)$ with scaling factors $u=10$ and $u=100$. 
Compared to the case $u=1$ in Figure \ref{fig:MC-mu}, as expected, we find that the confidence bands are tighter.
Furthermore, the match with the asymptotic distribution is slightly better.
However, using the larger scaling factor $u=100$ induces a visible bias. 
It would thus be of interest to develop a data-driven procedure for the choice of $u$, which is however out of the scope of this article.

\begin{figure}[h]
	\centering
	\includegraphics[width=0.48\textwidth]{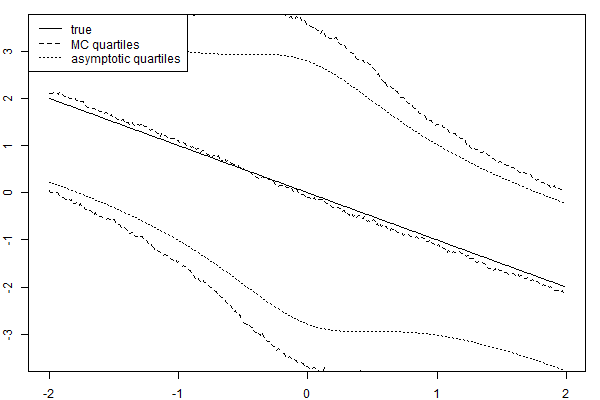}\includegraphics[width=0.48\textwidth]{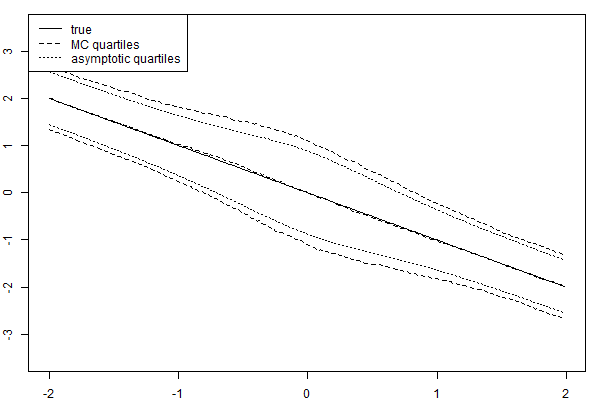}
	\caption{Behavior of $\hat{\mu}_n(x)$ in comparison with the true value $\mu(x)$, for the sampling schemes $T=10, h=10^{-6}$ (left) and $T=100, h=10^{-4}$ (right). Quartiles are based on $10^4$ Monte Carlo samples.}
	\label{fig:MC-mu}
\end{figure}

\begin{figure}[h]
	\centering
	\includegraphics[width=0.48\textwidth]{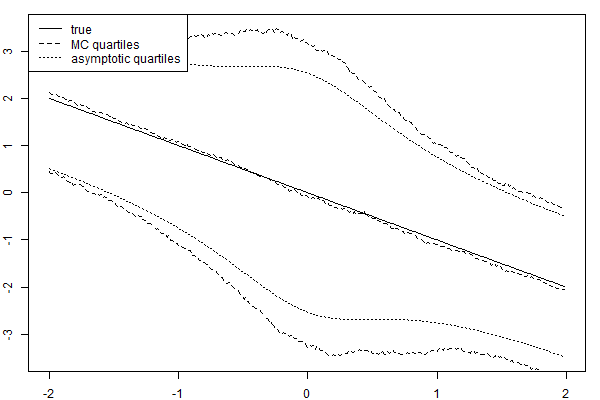}\includegraphics[width=0.48\textwidth]{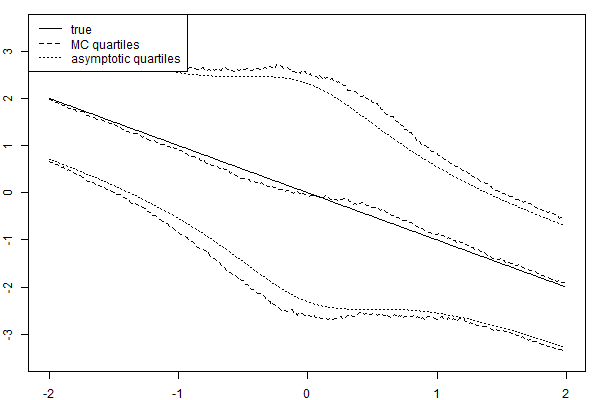}
	\caption{Behavior of the jump-filtered estimator $\hat{\mu}_n^*(x)$ in comparison with the true value $\mu(x)$, for the sampling scheme $T=10, h=10^{-6}$. The scaling factors for the jump filtering are $u=10$ (left) and $u=100$ (right). Quartiles are based on $10^4$ Monte Carlo samples.}
	\label{fig:MC-mu-filtered}
\end{figure}

\section{Proofs}\label{sec:proofs}

\begin{proof}[Proof of Proposition \ref{prop:cond-approx}]
	The bound on the conditional expectation is an immediate consequence of \eqref{eqn:taylor-1}. The conditional variance may be handled by treating $E(f^2(X_{t+h}-X_t)|X_t=x)$ analogously, and noting that \begin{align*}
		E(f(X_{t+h}-X_t)|X_t=x)^2 & \leq (h|\mathcal{A} \tau_xf(x)| + h^2\|\mathcal{A}^2\tau_xf\|_{\infty})^2 \\
		&\leq 2 h^2 |\mathcal{A}^*f(x)| + 2 h^4\|\mathcal{A}^2\tau_xf\|_{\infty}^2.
	\end{align*}
\end{proof}

\subsection{Analytical bounds on the Markov generator}

We will make use of the following properties of the family of norms $\|\cdot\|_{\infty, p}$. The proof of the following Lemma also contains the most technical part of the proof of Theorem \ref{thm:A2full}. Part 3 is not used in the following proofs, but it might still be of interest.

\begin{lemma}\label{lem:normsup}
	Let $f:\R\to\R$.
	\begin{enumerate}
		\item For any $a>0$, let $\kappa_a f(x)=\sup_{y: |y-x|\leq a} |f(y)|$. Then for all $p\geq 0$,
		\begin{align*}
		\|\kappa_af\|_{\infty, p} \leq 2^p (a\vee 1)^p \|f\|_{\infty, p}.
		\end{align*}
		\item For any $p>1$ and $q\in [0,p]$,  \begin{align}
		\label{eqn:intbound}\left\| x\mapsto \int_{-\infty}^{\infty} f(x+z) (|z| \vee 1)^{-q} dz\right\|_{\infty, q} \leq K_{p}  \|f\|_{\infty, p},
		\end{align}
		where $K_p = \frac{2^{p+4} p}{p-1}$.
		\item If $f$ is differentiable, then for any $q>0,\eta>1$ such that $p<q\leq \eta$, \begin{align}
				\label{eqn:intboundderiv}\left\| x\mapsto \int_{-\infty}^{\infty} \left[f(x+z)-f(x)\right] (|z| \vee 1)^{-1-q} dz\right\|_{\infty, p} \leq K^*_{q,p,\eta} \|f'\|_{\infty,\eta},
			\end{align}
			where $K^*_{q,p,\eta}=\left( 1+\frac{1}{q-p} \right) K_{\eta} $
		\item Denote $\tau_rf(x) = f(x-r)$ and $\rho_rf(x)=f(rx)$. Then for $p\geq 0$, we have $\|\tau_r f\|_{\infty, p} \leq 2^p (|r|\vee 1)^p \|f\|_{\infty, p}$ for $r\in\R\setminus\{0\}$, and $\|\rho_r f\|_{\infty, p} \leq (|r|\wedge 1)^{-p}\|f\|_{\infty, p}$.
	\end{enumerate}
\end{lemma}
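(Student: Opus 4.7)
The plan is to treat the four assertions separately. Parts 1 and 4 follow at once from the definition of the weighted norm combined with elementary triangle-inequality estimates, while Parts 2 and 3 both require splitting the integration domain according to the relative sizes of $|x|$ and $|z|$.

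For Part 1, I observe that for any $y$ with $|y-x|\leq a$ one has $|x|\vee 1 \leq |y|+a+1 \leq (1+a)(|y|\vee 1)$, and $1+a \leq 2(a\vee 1)$ then delivers the stated constant immediately after substituting into the supremum defining $\|\kappa_a f\|_{\infty, p}$. Part 4 is handled analogously by substitution: for $\tau_r f$, set $y = x-r$ and use $(|x|\vee 1) \leq 2(|r|\vee 1)(|y|\vee 1)$; for $\rho_r f$, set $y = rx$ and use $(|x|\vee 1) \leq (|y|\vee 1)/(|r|\wedge 1)$.

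For Part 2, the pointwise bound $|f(x+z)| \leq \|f\|_{\infty,p}(|x+z|\vee 1)^{-p}$ reduces the task to estimating the kernel integral $(|x|\vee 1)^q \int (|x+z|\vee 1)^{-p}(|z|\vee 1)^{-q}dz$. I would split this integral at $|z| = |x|/2$. On $\{|z| \leq |x|/2\}$, the inequality $|x+z| \geq |x|/2$ allows $(|x+z|\vee 1)^{-p} \leq 2^p(|x|\vee 1)^{-p}$ to be pulled out, and the truncated integral $\int_{|z|\leq |x|/2}(|z|\vee 1)^{-q}dz$ grows at most like $(|x|\vee 1)^{(1-q)_+}$ (up to a logarithm at $q=1$), combining with $(|x|\vee 1)^{q-p}$ and the assumption $p > 1$, $q \leq p$ to give a bounded contribution. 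On $\{|z| > |x|/2\}$ one instead pulls out $(|z|\vee 1)^{-q} \leq 2^q(|x|\vee 1)^{-q}$ and the substitution $u = x+z$ reduces the remainder to $\int(|u|\vee 1)^{-p}du = 2p/(p-1)$. Summing the two contributions and tracking powers of $2$ yields the constant $K_p = 2^{p+4}p/(p-1)$.

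Part 3 is the most delicate, and is the main technical obstacle. I would split at $|z| = |x|/2$ once more, but now exploit the cancellation in $f(x+z) - f(x)$. On the inner region, the fundamental theorem of calculus gives $|f(x+z)-f(x)| \leq \|f'\|_{\infty,\eta}\int_{[x,x+z]}(|u|\vee 1)^{-\eta}du$, and since the segment $[x, x+z]$ lies at distance at least $|x|/2$ from the origin, this is bounded by $|z|\cdot 2^\eta(|x|\vee 1)^{-\eta}\|f'\|_{\infty,\eta}$; pairing with the truncated integral of $|z|(|z|\vee 1)^{-1-q}$ produces a contribution whose $x$-exponent $p - \eta + (1-q)_+$ is non-positive by the hypotheses $p < q$ and $\eta \geq q$. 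On the outer region, the uniform bound $|f(x+z)-f(x)| \leq 2\eta/(\eta-1)\cdot\|f'\|_{\infty,\eta}$ (obtained by integrating $|f'|$ over all of $\R$) combined with the tail estimate $\int_{|z|>|x|/2}|z|^{-1-q}dz \lesssim (|x|\vee 1)^{-q}/q$ yields a contribution of order $(|x|\vee 1)^{p-q}$, bounded precisely because $p < q$. I expect the factor $1/(q-p)$ appearing in the stated constant $K^*_{q,p,\eta} = (1+1/(q-p))K_\eta$ to emerge from tracking how this tail constant deteriorates as $p \uparrow q$; the remaining work is book-keeping of powers of $2$ and of the constants arising in each one-dimensional integral.
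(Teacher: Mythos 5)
Your proposal is correct in substance, but it takes a genuinely different route from the paper in Parts 2 and 3. For Part 2 the paper first reduces to $x\geq 2$ and then cuts the integral into five explicit pieces ($[-1,1]$, $[1,\infty)$, $[-x+1,-1]$, $[-x-1,-x+1]$, $(-\infty,-x-1]$), estimating each one; your single split at $|z|=|x|/2$ is more symmetric and arguably cleaner, since the two regimes "$|x+z|\gtrsim|x|$" and "$|z|\gtrsim|x|$" are exactly the dichotomy the five-piece decomposition is implicitly exploiting. For Part 3 the divergence is more substantial: the paper does \emph{not} redo a splitting argument but instead writes $f(x+z)-f(x)=\int_0^z f'(x+r)\,dr$, applies Fubini, integrates out $z$ to produce the factor $\bigl(1+\tfrac{1}{q-p}\bigr)$ via $\int_r^\infty(|z|\vee 1)^{-1-q+p}dz$, and then invokes \eqref{eqn:intbound} applied to $|f'|$ with exponent pair $(p,\eta)$ — which is where $K_\eta$ comes from. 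Your self-contained split-and-estimate argument for Part 3 is valid (your exponent checks go through: when $q<1$ you additionally need $\eta>1$ to make $p-\eta+1-q<p-q<0$, which is part of the hypotheses), but it will not reproduce the stated constants $K_p=2^{p+4}p/(p-1)$ and $K^*_{q,p,\eta}=(1+\tfrac{1}{q-p})K_\eta$; in particular your decomposition has no natural source of a $1/(q-p)$ blow-up, whereas the paper's Fubini step forces one. Since every downstream use of this lemma only needs \emph{some} finite constant with the stated parameter dependence, this is cosmetic, but you should say so explicitly rather than claim the stated constants will "emerge." One bookkeeping point to fix: your bound $\int_{|z|\leq|x|/2}(|z|\vee 1)^{-q}dz\lesssim(|x|\vee 1)^{(1-q)_+}$ "up to a logarithm at $q=1$" hides a prefactor $1/|1-q|$ that degenerates as $q\to 1$; to obtain a constant uniform in $q\in[0,p]$, estimate instead $|z|^{-q}\leq|z|^{-p}(|x|/2)^{p-q}$ on the truncated range (giving $\int_1^{|x|/2}|z|^{-q}dz\leq(|x|/2)^{p-q}/(p-1)$), which makes the inner contribution manifestly bounded by a constant depending on $p$ alone. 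Parts 1 and 4 coincide with the paper's proof.
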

\begin{proof}[Proof of Lemma \ref{lem:normsup}]
	\underline{Claim 1:}
	Denote $a^*=a\vee 1$. For any $x\in\R$,
	\begin{align*}
	\sup_{y:|y-x|\leq a} |f(y)| \left( |x|\vee 1 \right)^p & \leq \sup_{y:|y-x|\leq a} |f(y)| \left( (|y|+a)\vee 1 \right)^p\\
	&\leq \sup_y |f(y)| \left( (|y|+a^*)\vee 2a^* \right)^p \\
	&\leq \sup_y |f(y)| \left( 2|y|\vee 2a^* \right)^p \\
	&\leq (2a^*)^p \sup_y |f(y)| \left( |y|\vee 1 \right)^p.
	\end{align*}
	In the last step, we used that $a^*\geq 1$.
	
	\underline{Claim 2:}
	For the second claim, as the definition of $\|\cdot \|_{\infty, q}$ is symmetric in $x$, we assume $x>0$. 
	Furthermore, assume without loss of generality $x\geq 2$, since the decay claimed by \eqref{eqn:intbound} is vacuous for small $x$. 
	Indeed, 
	\begin{align*}
		\sup_{x\in[-2,2]} \left|\int f(x+z) (|z|\vee 1)^{-q}\, dz \right| (|x|\vee 1)^q
		&\leq 2^q \|f\|_{\infty, p} \int (|x+z| \vee 1)^{-p}\, dz \\
		&= 2^q \|f\|_{\infty, p} \int (|z| \vee 1)^{-p}\, dz \\
		&= 2^{q+1} \|f\|_{\infty, p} \frac{p}{p-1} \leq K_p \|f\|_{\infty, p}.
	\end{align*}
	
	For $x\geq 2$, we decompose the integral as \begin{align}
		&\quad \left| \int f(x+z) (|z|\vee 1)^{-q} dz \right|\nonumber\\
		&\leq \int_{-1}^1 |f(x+z)| (|z|\vee 1)^{-q} dz \nonumber\\
		&\qquad  + \int_{1}^\infty \ldots dz  + \int_{-x+1}^{-1} \ldots dz + \int_{-x-1}^{-x+1}\ldots dz +\int_{-\infty}^{-x-1}\ldots dz \nonumber\\
		&=I_1 + I_2 + I_3+I_4+I_5 \label{eqn:normsup-decomp}
	\end{align}
	The first term may be bounded as $I_1 \leq \sup_{y:|x-y|\leq 1} |f(y)| \leq 2^q \|f\|_{\infty, q} (|x|\vee 1)^{-q}$, by virtue of Claim 1.
	For the second term may be bounded as 
	\begin{align*}
		I_2 
		& \leq \int_1^\infty |z|^{-q} |x+z|^{-p} \|f\|_{\infty, p}\, dz\\
		&\leq \|f\|_{\infty, p}\int_1^\infty |z|^{-q} |x+z|^{-p+q} (|x|\vee 1)^{-q}\, dz\\ 
		&\leq \|f\|_{\infty, p}(|x|\vee 1)^{-q} \int_1^\infty |z|^{-p}\, dz \\
		&= \|f\|_{\infty, p}\frac{(|x|\vee 1)^{-q}}{p-1},
	\end{align*}
	since $|x+z|\geq |z|$ in this situation.
	Moreover, since $x\geq 2$,
	\begin{align*}
		I_4
		&\leq \|f\|_{\infty, 0}\,\int_{-x-1}^{-x+1} |z|^{-q}\, dz\\
		&\leq \|f\|_{\infty, 0}\,\int_{-x-1}^{-x+1} (|x|/2)^{-q}\, dz\\
		&\leq \|f\|_{\infty, 0}\,2^{q+1} (|x|\vee 1)^{-q}.
	\end{align*}
	By substitution, and since $p>1$, the integral $I_5$ may be bounded as 
	\begin{align*}
		I_5
		&\leq \int_{-\infty}^{-x-1} |z|^{-q} |x+z|^{-p}\|f\|_{\infty, p}\, dz \\
		&=\|f\|_{\infty, p}|x|^{1-p-q} \int_{-\infty}^{-1-\frac{1}{x}} |z|^{-q} |z+1|^{-p} dz \\ 
		& \leq \|f\|_{\infty, p}|x|^{1-p-q}\int_{-\infty}^{-\frac{1}{x}} |z|^{-p} dz 
		 = \|f\|_{\infty, p}\frac{|x|^{-q}}{p-1} .
	\end{align*}
	Finally, since $q<p$,
	\begin{align*}
		I_3
		&\leq \int_{-x+1}^{-1} |z|^{-q} |x+z|^{-p} \|f\|_{\infty, p}\, dz\\
		&=\|f\|_{\infty, p}|x|^{1-p-q}\int_{-1+\frac{1}{x}}^{-\frac{1}{x}} |z|^{-q} \left|z+1\right|^{-p} dz \\
		&\leq \|f\|_{\infty, p}|x|^{1-p-q} \left(2^q\int_{-1+\frac{1}{x}}^{-\frac{1}{2}} |z+1|^{-p} dz + 2^p\int_{-\frac{1}{2}}^{-\frac{1}{x}} |z|^{-q} dz\right) \\
		&\leq \|f\|_{\infty, p}|x|^{1-p-q} 2^p \left(\int_{\frac{1}{x}}^{\infty} |z|^{-p}\, dz + \int_{-\infty}^{-\frac{1}{x}} |z|^{-p}\, dz\right)\\
		&= \|f\|_{\infty, p}\frac{2^{p+1}}{p-1} |x|^{-q}.
	\end{align*}
	Plugging these estimates into \eqref{eqn:normsup-decomp}, we find that \begin{align*}
		&\quad \left|\int f(x+z) (|z|\vee 1)^{-q} dz\right| \\
		&\leq 2^{q+1} \|f\|_{\infty, q} (|x|\vee 1)^{-q} + \|f\|_{\infty, p} \frac{(|x|\vee 1)^{-q}}{p-1}+ \|f\|_{\infty, 0} 2^{q+1} (|x|\vee 1)^{-q} \\
		&\qquad  + \|f\|_{\infty, p} |x|^{-q} \frac{2^{p+1}}{p-1} + \|f\|_{\infty, p} \frac{|x|^{-q}}{p-1}\\
		&\leq  \|f\|_{\infty, p} (|x|\vee 1)^{-q} \frac{2^{p+4}}{p-1}.
	\end{align*}
	Note that we may replace $|x|$ by $(|x|\vee 1)$ since we assumed without loss of generality that $x>2$.
	This completes the proof of the second statement.
	
	\underline{Claim 3:}
	The third inequality can be handled by Fubini's theorem via\begin{align*}
		\left|\int_0^\infty \left[f(x+z)-f(x)\right] \left( |z|\vee 1 \right)^{-1-q}\, dz \right| 
		&\leq \int_0^\infty \int_0^z |f'(x+r)| \left( |z|\vee 1 \right)^{-1-q} dr\, dz \\
		&=\int_0^\infty \int_r^\infty |f'(x+r)| \left( |z|\vee 1 \right)^{-1-q}  dz\, dr \\
		&\leq \int_0^\infty |f'(x+r)| \left( |r| \vee 1 \right)^{-p}\int_r^\infty \left( |z|\vee 1 \right)^{-1-q+p} dz\, dr\\
		&\leq \left( 1+\frac{1}{q-p} \right) \int_0^\infty |f'(x+r)| \left( |r|\vee 1 \right)^{-p}\,dr ,
	\end{align*}
	since $q>p$ in this situation.
	By symmetry in $z$, this bound may be extended to 
	\begin{align*}
		\left|\int_{-\infty}^\infty \left[f(x+z)-f(x)\right] \left( |z|\vee 1 \right)^{-1-q}\, dz \right| 
		& \leq  \left( 1+\frac{1}{q-p} \right) \int_{-\infty}^\infty |f'(x+r)| \left( |r|\vee 1 \right)^{-p}\,dr.
	\end{align*} 
	Together with \eqref{eqn:intbound}, we conclude that \eqref{eqn:intboundderiv} holds.

	\underline{Claim 4:}
	The fourth claim of the Lemma can be checked as \begin{align*}
		\sup_x |f(x-r)| \left( |x|\vee 1 \right)^p &= \sup_x |f(x)| \left( |x+r|\vee 1 \right)^p \\
		&\leq \sup_x |f(x)| \left( (|x|\vee 1) + (|r|\vee 1) \right)^p\\
		&\leq (|r|\vee 1)^p \sup_x |f(x)|  \left( (|x|\vee 1) + 1 \right)^p \\
		&\leq  2^p(|r|\vee 1)^p \sup_x |f(x)|  \left( |x|\vee 1 \right)^p.
	\end{align*}
	Furthermore, $\sup_x |f(rx)| \left( |x| \vee 1 \right)^p \leq \sup_x |f(rx)| \left( \frac{|rx| \vee 1}{|r|\wedge 1} \right)^p=(|r|\wedge 1)^{-p} \|f\|_{\infty, p}$.
\end{proof}

To simplify notation, we write the generator of $X_t$ as 
\begin{align*}
	\mathcal{A} f(x) = \mu(x) f'(x) + \frac{\sigma^2(x)}{2} f''(x) + \mathcal{J}f(x),
\end{align*}
where $\mathcal{J}$ is generator corresponding to the jump part, i.e.\ \begin{align*}
	\mathcal{J}f (x) 
	&= \int_\R f(x+c(x,z)) - f(x) - c(x,z) f'(x) \mathds{1}_{|c(x,z)|\leq 1} \nu(dz) \\
	&= \int_\R \left[f(x+z) - f(x) - z f'(x) \mathds{1}_{|z|\leq 1}\right] \rho(x,z)\, dz.
\end{align*}

In order to derive boundedness properties of $\mathcal{A}f$, We consider the jump operator $\mathcal{J}$ first. The principal technical result which enables us to derive bounds on $\|\mathcal{A} f\|_{\infty,0}$ is the following lemma. Although we will only need derivatives up to the second order for our statistical purposes, we formulate the result slightly more generally. This allows us to derive the stronger Theorem \ref{thm:Ak} by imposing stricter assumptions. To this end, we introduce the following stricter versions of assumption (J2), for any integer $m\geq 2$. 

\begin{itemize}
	\item[\textbf{(J2-m)}] The jump density $\rho(x,z)$ satisfies $|\frac{d^k}{dx^k}\rho(x,z)| \leq C_\rho \left( |z|^{-1-\alpha} \vee |z|^{-1-\tau} \right)$ for some $1<\tau<\alpha<2$ and $k=0,1,\ldots, m$.
\end{itemize}

In particular, (J2-2) is equivalent to (J2).

\begin{lemma}\label{lem:C2domain}
	Under assumptions (J1) and (J2-m), for any $q<\tau$, there exists a constant $K$ such that for any function $f$, and any $k\in\N, k\leq m$,
	\begin{align}
		\label{eqn:J0bound}\left\|\mathcal{J} f \right\|_{\infty, 0} 	&\leq  K \left(\|f''\|_{\infty, 0}+  \|f'\|_{\infty, 0}  \right) \\
		\label{eqn:J1bound}\left\|\mathcal{J} f \right\|_{\infty, q} 	&\leq  K \left(\|f''\|_{\infty, q}+  \|f'\|_{\infty, \tau}  \right),\\
		\label{eqn:Jkbound} \left\| (\mathcal{J} f)^{(k)} \right\|_{\infty, 0} &\leq K \sum_{j=1}^{k+2} \|f^{(j)}\|_{\infty, 0},\\
		\label{eqn:J2bound} \left\| (\mathcal{J} f)^{(k)} \right\|_{\infty, q} &\leq K (\|f^{(k+2)}\|_{\infty, q} +  \sum_{j=1}^{k+1} \|f^{(j)}\|_{\infty, \tau}).
	\end{align}
	If either \eqref{eqn:Jkbound} or \eqref{eqn:J2bound} is finite, then $x\mapsto (\mathcal{J}f)^{(k)}(x)$ is continuous and hence $\mathcal{J}f \in \mathcal{C}^k$.
	The constant $K$ depends on the values of $\alpha, C_\rho,\tau,$ and $q$. 
\end{lemma}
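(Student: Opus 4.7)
My plan is to split $\mathcal{J}f(x)=I_1(x)+I_2(x)$ into the small-jump part $I_1$ over $|z|\leq 1$ (retaining the compensating term $-zf'(x)$) and the large-jump part $I_2$ over $|z|>1$, bound each with the tools supplied by Lemma~\ref{lem:normsup}, and then extend to derivatives of order $k\leq m$ by differentiating under the integral via Leibniz.

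For $I_1$, a second-order Taylor expansion at $x$ yields $|f(x+z)-f(x)-zf'(x)|\leq \tfrac12 z^2\,\kappa_1 f''(x)$. Combined with $|\rho(x,z)|\leq C_\rho|z|^{-1-\alpha}$ and the integrability $\int_{|z|\leq 1}z^2|z|^{-1-\alpha}\,dz<\infty$ (which uses $\alpha<2$), this yields $|I_1(x)|\leq C\,\kappa_1 f''(x)$. Lemma~\ref{lem:normsup}(1) then absorbs the weight to give $(|x|\vee 1)^q|I_1(x)|\leq C\cdot 2^q\|f''\|_{\infty,q}$, producing the $\|f''\|$-term in both (1) and (2). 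For $I_2$, I would write $f(x+z)-f(x)=\int_0^z f'(x+s)\,ds$, apply Fubini with $|\rho(x,z)|\leq C_\rho|z|^{-1-\tau}$ on $|z|>1$, and reduce matters to $\int_{\mathbb{R}}|f'(x+s)|(|s|\vee 1)^{-\tau}\,ds$. The unweighted bound (1) then follows from the crude estimate $|f(x+z)-f(x)|\leq |z|\,\|f'\|_\infty$ together with $\int_{|z|>1}|z|^{-\tau}dz<\infty$ (which uses $\tau>1$), while the weighted bound (2) is the direct content of Lemma~\ref{lem:normsup}(2) applied with $p=\tau$.

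For (3) and (4), I would justify interchanging differentiation and integration by the pointwise bounds already established, giving via Leibniz
$$(\mathcal{J}f)^{(k)}(x)=\sum_{j=0}^{k}\binom{k}{j}\int\bigl[f^{(j)}(x+z)-f^{(j)}(x)-zf^{(j+1)}(x)\mathds{1}_{|z|\leq 1}\bigr]\,\partial_x^{k-j}\rho(x,z)\,dz.$$
Assumption (J2-m) supplies the same kernel bound $C_\rho(|z|^{-1-\alpha}\vee|z|^{-1-\tau})$ on each $\partial_x^{k-j}\rho$ for $k-j\leq m$, so every summand is a jump operator applied to $f^{(j)}$ with a similarly decaying kernel, and (1)/(2) apply verbatim; this requires $f\in\mathcal{C}^{k+2}$. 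Summing the resulting per-$j$ estimates $K(\|f^{(j+2)}\|_{\infty,q}+\|f^{(j+1)}\|_{\infty,\tau})$ over $j=0,\ldots,k$ and using monotonicity $\|\cdot\|_{\infty,q}\leq\|\cdot\|_{\infty,\tau}$ for $q\leq\tau$ to collapse intermediate small-jump terms yields exactly (3) and (4). Continuity of $(\mathcal{J}f)^{(k)}$ then follows from dominated convergence applied to the same Leibniz expansion.

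The main technical obstacle is the weighted bound on $I_2$: the integrand is expressed through translates $f(x+z)$ with arbitrarily large $z$, so one must trade such shifts against a polynomial weight in $x$. This is precisely what Lemma~\ref{lem:normsup}(2)-(3) is built to do, leveraging $\tau>1$ so that the decay $|z|^{-1-\tau}$ can absorb the shifted polynomial weight; the strict inequality $p<q$ required by part~(3) of that lemma is what dictates the restriction $q<\tau$ in the statement.
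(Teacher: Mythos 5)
Your proposal is correct and follows essentially the same route as the paper's proof: the same small-jump/large-jump split with a second-order Taylor bound and Lemma \ref{lem:normsup}(1) for $|z|\leq 1$, the Fubini reduction to $\int |f'(x+r)|(|r|\vee 1)^{-p}\,dr$ handled by Lemma \ref{lem:normsup}(2)--(3) for $|z|>1$, the identical Leibniz expansion combined with (J2-m) for the derivatives, and dominated convergence for continuity. No gaps.
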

\begin{proof}[Proof of Lemma \ref{lem:C2domain}]
	We may assume that all occurring norms $\|f^{(j)}\|_{\infty, q}$ are finite, since otherwise there is nothing to show.
	Using a Taylor expansion of $f$ and the assumptions imposed upon $\rho$, we decompose the integral as
	\begin{align}
		&\quad \nonumber\left|\mathcal{J} f(x) \right|	\\
		\nonumber&\leq  \int_{[-1,1]^c} \left| f(x+z)-f(x) \right|\rho(x,z) dz + \int_{-1}^{1} |z|^2 \rho(x,z)dz \sup_{|y-x|\leq 1} |f''(y)| \\
		\label{eqn:J1decomp}& \leq C_\rho\int |f(x+z)-f(x)| \left( |z| \vee 1 \right)^{-1-\tau} dz + C_\rho \int_{-1}^1 |z|^{1-\alpha}dz\sup_{|y-x|\leq 1} |f''(y)|  \\
		\nonumber&\leq C_\rho K^*_{\tau,q,\tau} \|f'\|_{\infty, \tau} \left(|x|\vee 1 \right)^{-q} + 2^qC_\rho \frac{2}{2-\alpha} \|f''\|_{\infty, q} \left(|x|\vee 1 \right)^{-q}.
	\end{align}
	The latter inequality holds by virtue of Lemma \ref{lem:normsup}(3) since $\tau>1$. 
	This establishes \eqref{eqn:J1bound}. 
	For $q=0$, we may also obtain the bound 
	\begin{align*}
		\int_{[-1,1]^c} |f(x+z)-f(x)| \rho(x,z) dz &\leq 2 C_\rho\|f'\|_{\infty, 0} \int_{1}^\infty |z|^{-\tau}dz < \infty,
	\end{align*}
	proving \eqref{eqn:J0bound}.
	
	The first derivative can be written as
	\begin{align}
		\begin{split}
		\frac{d}{dx} \mathcal{J}f(x) &= \int \left[f'(x+z) - f'(x) - z f''(x) \mathds{1}_{|z|\leq 1} \right] \rho(x,z)dz\\
		&\qquad + \int \left[f(x+z) - f(x) - z f'(x) \mathds{1}_{|z|\leq 1} \right] \frac{d}{dx} \rho(x,z) dz
		\label{eqn:jumpderiv-1}\end{split} \\
		&= I_1(x) + I_2(x).\nonumber
	\end{align}
	Since (J2-m) requires the same bound on $|\rho(x,z)|$ and $|\frac{d}{dx} \rho(x,z)|$, we may apply the same reasoning as in the derivation of \eqref{eqn:J1bound} to bound the integrands of $I_1$ and $I_2$, i.e.\ \begin{align*}
		I_1(x) 
		&\leq K (|x|\vee 1)^{-q}( \|f''\|_{\infty, \tau} + \|f'''\|_{\infty, q}),\\
		I_2(x)
		&\leq K (|x|\vee 1)^{-q}( \|f'\|_{\infty, \tau} + \|f''\|_{\infty, q})
	\end{align*} 
	To justify the exchange of integration and differentiation in \eqref{eqn:jumpderiv-1}, an integrable majorant can be obtained by using a rougher bound of the integrand, such as \begin{align*}
		|I_1(x)| & \leq C_\rho\int_{[-1,1]^c} |f'(x+z)-f'(x)| |z|^{-1-\tau} + C_\rho \sup_x |f'''(x)|\int_{-1}^{1} |z|^{1-\alpha} dz   \\ 
		& \leq C_\rho \int \|f''\|_{\infty, 0} |z| \left( |z| \vee 1 \right)^{-1-\tau}dz + C_\rho  \|f'''\|_{\infty, 0} \int_{-1}^1 |z|^{1-\alpha} dz < \infty.
	\end{align*}
	The integrand of $I_2(x)$ can be bounded analogously.
	
	In the same spirit, we can find an integrable majorant for the higher order derivatives, which are given by
	\begin{align}
			\label{eqn:int-k-deriv}\left|\frac{d^k}{dx^k} \mathcal{J}f(x)\right| &= \left|\sum_{j=0}^k \binom{k}{j} \int \left[ f^{(j)}(x+z) - f^{(j)}(x) - zf^{(j+1)}(x) \mathds{1}_{|z|\leq 1} \right] \frac{d^{k-j}}{dx^{k-j}}\rho(x,z)dz \right| \\
			\nonumber&\leq (|x|\vee 1)^{-q} \sum_{j=0}^k \binom{k}{j} C_\rho \left[ K^*_{\tau,q,\eta} \|f^{(j+1)}\|_{\infty, \eta} + 2^q \|f^{(j+2)}\|_{\infty, q} \right].
	\end{align}
	Since $\|f\|_{\infty, q} \leq \|f\|_{\infty, \eta}$ for any function $f$ as $q\leq \eta$, we conclude that \eqref{eqn:J2bound} holds. Furthermore, since the integrand in \eqref{eqn:int-k-deriv} is continuous in $x$ and has a majorant, the integral is also continuous in $x$. 
	
	By applying the same technique as in \eqref{eqn:J0bound} to the summands in \eqref{eqn:int-k-deriv}, we also obtain the bound \eqref{eqn:Jkbound}.
\end{proof}
	
The full generator $\mathcal{A}f = \mu f' + \frac{\sigma^2}{2} f'' + \mathcal{J}f$ can be bounded similarly by making use of Lemma \ref{lem:C2domain}. 
The formulation of the following corollary yields bounds on higher order derivatives if assumptions stronger than (D) and (V) are satisfied. 
Note that for $m=2$, the additional condition is redundant.

\begin{cor}\label{cor:full-gen}
Let assumptions (J1), (J2-m), (D) and (V) hold. Moreover, suppose that $\mu,\sigma^2\in\mathcal{C}^m$ such that $\|(\sigma^2)^{(k)}\|_{\infty, -p_V} \leq C_\sigma, \|\mu^{(k)}\|_{\infty, -p_D} \leq C_\mu$ for all $k=1,\ldots, m$. Then for any $0\leq q<\tau$, there exists a constant $K$ such that for any function $f:\R\to\R$ which is $k+2$ times differentiable,
\begin{align}
	\label{eqn:fullgen-infty-0}\|\mathcal{A}f\|_{\infty, 0} & \leq K \left( \|f'\|_{\infty,1} + \|f''\|_{\infty, 0} \right)\\
	\label{eqn:fullgen-deriv}\|(\mathcal{A}f)^{(k)}\|_{\infty, q} & \leq K \left( \|f^{(k+2)}\|_{\infty, q} + \sum_{j=1}^{k+1} \|f^{(j)}\|_{\infty, \chi(q)} \right),
\end{align}
for $\chi(q)=q+\max(\tau, p_D, p_V)$.
The constant $K$ depends on the values of $\alpha, C_\rho,C_\mu,C_\sigma,\tau$ and $q$.
\end{cor}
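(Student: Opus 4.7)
The plan is to bound each of the three pieces of $\mathcal{A}f = \mu f' + \tfrac{1}{2}\sigma^2 f'' + \mathcal{J}f$ separately, with the jump term handled directly by Lemma \ref{lem:C2domain} and the drift and diffusion terms reduced to products that can be estimated via the norm duality \eqref{eqn:norm-duality}. The two inequalities \eqref{eqn:fullgen-infty-0} and \eqref{eqn:fullgen-deriv} differ only in that the latter requires Leibniz' rule and a bookkeeping of which weight each factor contributes.

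For \eqref{eqn:fullgen-infty-0}, I would write $\|\mu f'\|_{\infty,0} \leq \|\mu\|_{\infty,-1}\|f'\|_{\infty,1}$ using duality with $p=0,q=-1$, and $\|\sigma^2 f''\|_{\infty,0}\leq \|\sigma^2\|_{\infty,0}\|f''\|_{\infty,0}$. Both norms on $\mu$ and $\sigma^2$ are controlled by (D), (V). Combining these with \eqref{eqn:J0bound} from Lemma \ref{lem:C2domain} yields the claim.

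For \eqref{eqn:fullgen-deriv}, I would expand $(\mu f')^{(k)}$ and $(\sigma^2 f'')^{(k)}$ by Leibniz' rule into $\sum_{j} \binom{k}{j}\mu^{(j)} f^{(k-j+1)}$ and $\sum_{j}\binom{k}{j}(\sigma^2)^{(j)}f^{(k-j+2)}$. For each product I invoke \eqref{eqn:norm-duality} to shift the weight onto the $f$-factor. The $j=0$ term for the drift gives $\|\mu\|_{\infty,-1}\|f^{(k+1)}\|_{\infty,q+1}$, the $j\geq 1$ drift terms give $\|\mu^{(j)}\|_{\infty,-p_D}\|f^{(k-j+1)}\|_{\infty,q+p_D}$, and analogously for $\sigma^2$ with $p_V$. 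Since $\tau>1$ and $\chi(q)=q+\max(\tau,p_D,p_V)$, each of the weights $q+1$, $q+p_D$, $q+p_V$ is bounded by $\chi(q)$, so the $f$-norms are dominated by $\|f^{(j)}\|_{\infty,\chi(q)}$ for the appropriate $j \leq k+1$. The $j=0$ diffusion term yields $\|\sigma^2\|_{\infty,0}\|f^{(k+2)}\|_{\infty,q}$, which matches the leading term in the claim. Finally \eqref{eqn:J2bound} of Lemma \ref{lem:C2domain} supplies $\|(\mathcal{J}f)^{(k)}\|_{\infty,q}\leq K(\|f^{(k+2)}\|_{\infty,q}+\sum_{j=1}^{k+1}\|f^{(j)}\|_{\infty,\tau})$, and $\|\cdot\|_{\infty,\tau}\leq \|\cdot\|_{\infty,\chi(q)}$ since $\tau\leq \chi(q)$. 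Summing the three contributions with a combined constant $K$ depending on $\alpha,C_\rho,C_\mu,C_\sigma,\tau,q$ yields \eqref{eqn:fullgen-deriv}.

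There is no real obstacle here; the main subtlety is purely notational, namely verifying that the definition $\chi(q)=q+\max(\tau,p_D,p_V)$ simultaneously absorbs the weights $q+1$ (from multiplying by $\mu$, which grows linearly), $q+p_D$ and $q+p_V$ (from the $j\geq 1$ Leibniz terms under conditions (D), (V)), and $\tau$ (from the jump operator via Lemma \ref{lem:C2domain}). The requirement $q<\tau$ is inherited from Lemma \ref{lem:C2domain} and is the only restriction on $q$.
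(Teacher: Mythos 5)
Your proposal is correct and follows essentially the same route as the paper: decompose $\mathcal{A}f$ into drift, diffusion, and jump parts, apply the duality \eqref{eqn:norm-duality} together with Leibniz' rule to the first two, and invoke Lemma \ref{lem:C2domain} for $\mathcal{J}f$, absorbing the weights $q+1$, $q+p_D$, $q+p_V$, $\tau$ into $\chi(q)$. The only cosmetic difference is that the paper applies Lemma \ref{lem:C2domain} with $\eta=q+\tau$ where you cite \eqref{eqn:J2bound} verbatim and then enlarge $\tau$ to $\chi(q)$; both suffice.
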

\begin{proof}[Proof of Corollary \ref{cor:full-gen}]
Inequality \eqref{eqn:fullgen-infty-0} is a consequence of Lemma \ref{lem:C2domain} and the duality \eqref{eqn:norm-duality}, since
\begin{align*}
	\|\mathcal{A}f\|_{\infty, 0} & \leq \|\mu f'\|_{\infty,0} + \frac{1}{2} \|\sigma^2 f''\|_{\infty,0} + \|\mathcal{J}f\|_{\infty, 0} \\ 
	&\leq C_\mu \|f'\|_{\infty, 1} + C_\sigma \|f''\|_{\infty, 0} + K(\|f''\|_{\infty, 0} + \|f'\|_{\infty,0}).
\end{align*}
To treat the derivatives, we write
\begin{align*}
	\frac{d^k}{dx^k} \mathcal{A}f(x) &= \frac{d^k}{dx^k} \left[ \mu(x)f'(x) + \frac{\sigma^2(x)}{2}f''(x) + \mathcal{J}f(x) \right] \\
	&= (\mathcal{J}f)^{(k)}(x) + \mu(x)f^{(k+1)}(x) + \frac{\sigma^2(x)}{2} f^{(k+2)}(x) \\
	&\qquad +\sum_{j=1}^k \binom{k}{j} \mu^{(j)}(x) f^{(k-j+1)}(x) + \frac{1}{2}\sum_{j=1}^k \binom{k}{j} (\sigma^2)^{(j)}(x) f^{(k-j+2)}(x).
\end{align*}
By bounding each term individually via the duality \eqref{eqn:norm-duality}, and applying Lemma \ref{lem:C2domain} with $\eta=q+\tau$, this yields \begin{align*}
	\|(\mathcal{A}f)^{(k)}\|_{\infty, q} & \leq K \left( \|f^{(k+2)}\|_{\infty, q} + \sum_{j=1}^{k+1} \|f^{(j)}\|_{\infty,q+\tau} + \|f^{(k+1)}\|_{\infty, q+1}  \right) \\
	&\qquad + K \left( \sum_{j=1}^k \|f^{(k-j+1)}\|_{\infty, q+p_D} + \sum_{j=1}^k \|f^{(k-j+2)}\|_{\infty, q+p_V}\right)\\
	&\leq K \left( \|f^{(k+2)}\|_{\infty, q} + \sum_{j=1}^{k+1} \|f^{(j)}\|_{\infty, \chi(q)} \right),
\end{align*}
where we let $K$ vary from line to line.
\end{proof}

By applying \eqref{eqn:fullgen-infty-0} to $\mathcal{A}f$, we are now able to complete the proof of Theorem \ref{thm:A2full}.

\begin{proof}[Proof of Theorem \ref{thm:A2full}]
	Let $K$ denote a generic constant, not depending on $f$, which may change from line to line.
	By applying \eqref{eqn:fullgen-infty-0} and \eqref{eqn:fullgen-deriv}, we obtain
	\begin{align*}
		\|\mathcal{A}^2 f\|_{\infty, 0} & \leq K \left( \|(\mathcal{A}f)'\|_{\infty, 1} + \|(\mathcal{A}f)''\|_{\infty, 0} \right) \\
		&\leq K \left( \|f'''\|_{\infty,1} + \|f''\|_{\infty,\chi(1)} + \|f'\|_{\infty,\chi(1)}  \right) \\
		&\qquad + K \left( \|f''''\|_{\infty,0} + \|f'''\|_{\infty,\chi(0)}+ \|f''\|_{\infty,\chi(0)} + \|f'\|_{\infty,\chi(0)}  \right) \\
		&\leq K \left( \|f''''\|_{\infty, 0} + \|f'''\|_{\infty,\chi(0)} + \|f''\|_{\infty,\chi(1)} +\|f'\|_{\infty,\chi(1)} \right)
	\end{align*}
\end{proof}

A very similar reasoning establishes Theorem \ref{thm:Ak}.

\begin{proof}[Proof of Theorem \ref{thm:Ak}]
Let $K$ denote a generic constant, not depending on $x$ or $f$, which may change from line to line. 
The stronger assumptions of Theorem \ref{thm:Ak} imply that condition (J2-2m) holds. 
By Lemma \ref{lem:C2domain}, for any $r\leq 2m$, \begin{align*}
	|(\mathcal{A} f)^{(r)}(x)| &\leq |(\mathcal{J}f)^{(r)}(x)| + \sum_{j=0}^r \binom{r}{j}\left|\mu^{(j)}(x) f^{(r-j+1)}(x)\right|+ \frac{1}{2} \sum_{j=0}^r \binom{r}{j}\left|(\sigma^2)^{(j)}(x) f^{(r-j+2)}(x)\right| \\
	&\leq \|(\mathcal{J}f)^{(r)}\|_{\infty, 0} + 2^r(C_\mu + C_\sigma) \sum_{j=1}^{r+2} \|f^{(j)}\|_{\infty, 0} \\
	&\leq K \sum_{j=1}^{r+2} \|f^{(j)}\|_{\infty, 0}.
\end{align*}
Choosing $r=0$, this proves the theorem for $m=0$. Now suppose that the claim of the theorem holds for a fixed value $m$, and that conditions of the theorem hold for all $k\leq 2(m+1)$. Then, \begin{align*}
	\|\mathcal{A}^{m+1} f\|_{\infty, 0} = \|\mathcal{A}^{m} (\mathcal{A} f)\|_{\infty,0} &\leq K \sum_{j=1}^{2m} \| (\mathcal{A} f)^{(j)} \|_{\infty, 0} \\
	&\leq K \sum_{j=1}^{2m} \sum_{l=1}^{j+2} \|f^{(l)}\|_{\infty, 0} \qquad \leq K \sum_{j=1}^{2(m+1)} \|f^{(j)}\|_{\infty, 0}.
\end{align*}
By induction, this completes the proof.
\end{proof}

The following simple extension proves useful to control the bias in kernel estimation.

\begin{lemma}\label{lem:genstar-smooth}
Under assumptions (J1), (J2), there exists a constant $K_\rho>0$ such that for all $f\in \mathcal{C}^2$ and $k=0,1,2$, \begin{align*}
	\left|\frac{d^k}{dx^k}\mathcal{J}^*f(x) \right| \leq K_\rho \left( \|f\|_{\infty,0} + \|f''\|_{\infty,0} \right).
\end{align*}
\end{lemma}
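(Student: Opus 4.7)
The plan is to exploit the fact that the $x$-dependence of $\mathcal{J}^*f(x)$ enters \emph{only} through the jump density $\rho(x,z)$: the factor $[f(z)-f(0)-f'(0)z\mathds{1}_{|z|\leq 1}]$ in the integrand does not involve $x$ at all. Hence, formally,
\begin{align*}
\frac{d^k}{dx^k}\mathcal{J}^*f(x) = \int_\R \big[f(z)-f(0)-f'(0)z\mathds{1}_{|z|\leq 1}\big]\,\frac{d^k}{dx^k}\rho(x,z)\,dz,
\end{align*}
for $k=0,1,2$. The differentiation under the integral is legitimized by producing a uniform integrable majorant, which is the same expression I construct below to prove the bound.

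The next step is to control the integrand by splitting at $|z|=1$. On $|z|\leq 1$, a second-order Taylor expansion of $f$ at $0$ yields
\begin{align*}
\big|f(z)-f(0)-f'(0)z\big|\leq \tfrac{1}{2}\|f''\|_\infty z^2,
\end{align*}
while for $|z|>1$ the trivial estimate $|f(z)-f(0)|\leq 2\|f\|_\infty$ applies. Assumption (J2) gives
\begin{align*}
\Big|\tfrac{d^k}{dx^k}\rho(x,z)\Big|\leq C_\rho|z|^{-1-\alpha}\ \text{on}\ |z|\leq 1, \qquad \Big|\tfrac{d^k}{dx^k}\rho(x,z)\Big|\leq C_\rho|z|^{-1-\tau}\ \text{on}\ |z|> 1,
\end{align*}
uniformly in $x$ and in $k\in\{0,1,2\}$. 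Combining these bounds gives
\begin{align*}
\Big|\tfrac{d^k}{dx^k}\mathcal{J}^*f(x)\Big|
&\leq \tfrac{1}{2}\|f''\|_\infty C_\rho\int_{-1}^1 |z|^{1-\alpha}\,dz + 2\|f\|_\infty C_\rho\int_{|z|>1} |z|^{-1-\tau}\,dz \\
&= \frac{C_\rho}{2-\alpha}\|f''\|_\infty + \frac{4C_\rho}{\tau}\|f\|_\infty,
\end{align*}
which is finite since $\alpha<2$ and $\tau>1$. Taking $K_\rho = C_\rho\bigl(\tfrac{1}{2-\alpha}+\tfrac{4}{\tau}\bigr)$ establishes the claim, with a constant that depends neither on $x$ nor on $k$.

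There is no real obstacle here. The argument is structurally parallel to (but strictly simpler than) the proof of Lemma \ref{lem:C2domain}, because differentiating $\mathcal{J}^*f$ in $x$ does \emph{not} introduce additional derivatives of $f$ via a shift $\tau_x$: the regularity cost of each $x$-derivative is paid entirely by assumption (J2) on $\rho$. Consequently the right-hand side involves only $\|f\|_\infty$ and $\|f''\|_\infty$, independent of $k\leq 2$.
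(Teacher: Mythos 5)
Your proof is correct and follows essentially the same route as the paper's: differentiate under the integral (justified by the same majorant), use the uniform bound from (J2) on $\frac{d^k}{dx^k}\rho(x,z)$, and control the integrand by a second-order Taylor bound near zero and boundedness of $f$ away from zero. The explicit constant $K_\rho = C_\rho\bigl(\tfrac{1}{2-\alpha}+\tfrac{4}{\tau}\bigr)$ is a welcome addition but nothing is substantively different.
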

\begin{proof}[Proof of Lemma \ref{lem:genstar-smooth}]
We have
\begin{align*}
	\left|\frac{d^k}{dx^k}\mathcal{J}^*f(x)\right|&= \left| \int \left[f(z)-f(0)-zf'(0)\mathds{1}_{|z|\leq 1}\right] \frac{d^k}{dx^k} \rho(x,z)dz\right| \\
	&\leq C_\rho \int |f(z)-f(0)-zf'(0)\mathds{1}_{|z|\leq 1}| \left( |z|^{-1-\alpha}\vee |z|^{-1-\tau} \right)dz \\
	&\leq K_\rho \left(\|f''\|_{\infty,0} + \|f\|_{\infty,0} \right)
\end{align*}
for a constant $K_\rho$ depending on $\rho$. This upper bound also justifies the exchange of differentiation and integration.
\end{proof}

\subsection{Stable-like processes}

\begin{lemma}\label{lem:SL-T-J}
(J1), (SL1) and (SL2) imply (J2).
\end{lemma}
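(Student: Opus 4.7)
My plan is to verify directly the pointwise bound $|\partial_x^k \rho(x,z)| \leq C'_\rho (|z|^{-1-\alpha^*} \vee |z|^{-1-\tau^*})$ for appropriate constants $1 < \tau^* < \alpha^* < 2$ and $k = 0, 1, 2$. Since (SL2) guarantees $\|\alpha\|_\infty < 2$, I can fix $\alpha^* \in (\|\alpha\|_\infty, 2)$ and set $\epsilon := \alpha^* - \|\alpha\|_\infty > 0$. For the tail exponent I take $\tau^* := \min(\tau, (1+\alpha^*)/2)$, which satisfies $1 < \tau^* \leq \tau$ and $\tau^* < \alpha^*$. The region $|z| > 1$ is then handled immediately: (SL1) gives $|\partial_x^k \rho(x,z)| \leq C_\rho |z|^{-1-\tau} \leq C_\rho |z|^{-1-\tau^*}$, which is the desired tail bound.

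The real work is for $|z| \leq 1$, where $\rho(x,z) = r(x) |z|^{-1-\alpha(x)} (1 + g(x,z))$. Writing $\phi(x;z) = |z|^{-1-\alpha(x)} = \exp(-(1+\alpha(x)) \log|z|)$, a direct computation gives $\partial_x \phi = -\alpha'(x) \log|z| \cdot \phi$ and $\partial_x^2 \phi = [-\alpha''(x) \log|z| + \alpha'(x)^2 (\log|z|)^2]\, \phi$, i.e.\ $\phi$ times a polynomial in $\log|z|$ of degree at most $k$ whose coefficients are uniformly bounded by (SL2). Applying Leibniz to the factorization $\rho = r \cdot \phi \cdot (1+g)$, each resulting summand in $\partial_x^k \rho(x,z)$ is bounded by a universal constant times $|z|^{-1-\alpha(x)} |\log|z||^j (1 + C_g |z|^{\delta(x)})$ with $j \leq k \leq 2$, using the uniform bounds on $r, r', r''$ from (SL2) and on $g, \partial_x g, \partial_x^2 g$ from (SL1).

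The decisive reduction is then
\begin{equation*}
|\log|z||^j\, |z|^{-1-\alpha(x)} = |\log|z||^j\, |z|^{\alpha^* - \alpha(x)} \cdot |z|^{-1-\alpha^*} \leq |\log|z||^j\, |z|^{\epsilon} \cdot |z|^{-1-\alpha^*},
\end{equation*}
and $\sup_{0 < |z| \leq 1} |\log|z||^j |z|^\epsilon < \infty$ for each $j \in \{0,1,2\}$. Since $|z|^{\delta(x)} \leq 1$ on $|z| \leq 1$, absorbing all the resulting constants into a single $C'_\rho$ yields the desired bound and completes the verification of (J2). The only real obstacle is controlling the log-powers generated by differentiating $|z|^{-1-\alpha(x)}$ in $x$ near $z = 0$; this is exactly where the strict inequality $\|\alpha\|_\infty < 2$ assumed in (SL2) enters, providing the uniform buffer $\epsilon > 0$ that makes polynomial decay of $|z|^{\epsilon}$ dominate the logarithmic growth.
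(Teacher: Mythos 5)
Your proof is correct and follows essentially the same route as the paper: differentiate the stable-like density in $x$, observe that the only new feature is the logarithmic factors coming from $\partial_x |z|^{-1-\alpha(x)}$, and absorb $|\log|z||^j$ into a slightly larger exponent $\alpha^*$ using the uniform gap $\|\alpha\|_\infty<2$ (the paper makes the specific choice $\gamma=(2+\overline\alpha)/2$ where you keep a generic $\alpha^*$). The only cosmetic caveat is that you should choose $\alpha^*>1$ as well, so that $\tau^*=\min(\tau,(1+\alpha^*)/2)>1$ as (J2) requires.
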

\begin{proof}[Proof of Lemma \ref{lem:SL-T-J}]
	Assume (J1), (SL1) and (SL2). The condition (J2) for $|z|> 1$ clearly holds. For $|z|\leq 1$, we also have $\rho(x,z) \leq  \|r\|_\infty(1 + C_g) |z|^{-1-\alpha}$ for $\overline{\alpha}=\sup_x  \alpha(x)$. Furthermore, for $|z|\leq 1$, \begin{align*}
		\left|\frac{d}{dx} \rho(x,z)\right| &\leq \frac{\left|\frac{d}{dx}(r(x) + r(x)g(x,z))\right|}{|z|^{1+\alpha(x)}} + \frac{|r(x)+r(x)g(x,z)|}{|z|^{1+\alpha(x)}} |\ln|z|| \alpha'(x)\\
		&\leq C \frac{1}{|z|^{-1-\overline{\alpha}}} \ln|z|,
	\end{align*}
	for some finite constant $C$ depending on $C_g, \|r'\|_\infty, \|r\|_\infty, \|\alpha'\|_\infty$. Since $\overline{\alpha}<2$ by (SL2), $|z|^{-1-\overline{\alpha}} |\ln|z|| \leq C |z|^{-1-\gamma}$ for $\gamma=\frac{2+\overline{\alpha}}{2}<2$ and some finite factor $C$, for all $|z|\leq 1$. Analogously, we obtain a bound on $\frac{d^2}{dx^k} \rho(x,z)$, such that (J2) holds. 
\end{proof}

\begin{proof}[Proof of Lemma \ref{lem:locstable-generator}]
Without loss of generality, assume $\delta(x)<\alpha(x)$. Then
\begin{align}
	\nonumber &\quad\left|\mathcal{J}^*f_u(x) -  u^{\alpha(x)} r(x) f^{[\alpha(x)]}(0)\right|\\
	\nonumber &=\left|\mathcal{J}^*f_u(x) -   r(x) f_u^{[\alpha(x)]}(0)\right|\\
	\begin{split} &=\Big|\int_{-1}^1 \left[ f(uz)-f(0)-f'(0)uz\mathds{1}_{|uz|\leq 1} \right]\frac{r(x)g(x,z)}{|z|^{1+\alpha(x)}}dz \\
	&\qquad+ \int_{[-1,1]^c} \left[ f(uz)-f(0) \right] \left[\rho(x,z) - \frac{r(x)}{|z|^{1+\alpha(x)}} \right]dz \Big|. \end{split}\label{eqn:1}
\end{align}
	Note that we changed the threshold of the indicator function, which is possible because $g$ is assumed to be symmetric. 
	Then
\begin{align*}
	(\ref*{eqn:1})
	&\leq  r(x) C_g \int_{-1}^{1}  \frac{\left|f(uz)-f(0) - uz f'(0)\mathds{1}_{u|z|\leq 1} \right|}{|z|^{1+\alpha(x)-\delta(x)}}dz \\
	&\qquad + C_\rho \int_{[-1,1]^c}  \frac{\left|f(uz)-f(0)\right|}{|z|^{1+\tau}}dz + r(x) \int_{[-1,1]^c}  \frac{\left|f(uz)-f(0)\right|}{|z|^{1+\alpha(x)}}dz\\
	&\leq C_g r(x) u^{\alpha(x)-\delta(x)} \int_{-u}^{u} \frac{\left| f(z)-f(0) -z f'(0) \mathds{1}_{|z|\leq 1} \right|}{|z|^{1+\alpha(x)-\delta(x)}}dz + 4 \|f\|_{\infty,0} \left( \frac{C_\rho}{\tau} + \frac{r(x)}{\alpha(x)} \right) \\
	&\leq C_g r(x)u^{\alpha(x)-\delta(x)} \left(\|f''\|_{\infty, 0} \int_{-1}^{1} |z|^{1+\delta(x) -\alpha(x)}dz + 2\|f\|_{\infty, 0}\int_1^{u}|z|^{-1+\delta(x)-\alpha(x)}\,dz \right) \\
	&\qquad  + 4 \|f\|_{\infty,0} \left( \frac{C_\rho}{\tau} + \frac{r(x)}{\alpha(x)} \right) \\
	&\leq C_g r(x)u^{\alpha(x)-\delta(x)}\left( \frac{2\|f''\|_{\infty, 0}}{2-\alpha(x)} + \frac{2\|f\|_{\infty,0}}{\alpha(x)-\delta(x)} \right) +  4 \|f\|_{\infty,0} \left( \frac{C_\rho}{\tau} + \frac{r(x)}{\alpha(x)-\delta(x)} \right).
\end{align*}
Since the terms $\tau,2-\alpha(x),\alpha(x)-\delta(x)<2$ are all bounded, and $\tau>1$, we can bound the latter expression by \begin{align*}
	\tilde{C}_\rho (\|f''\|_\infty + \|f\|_\infty) \frac{r(x)+1}{(2-\alpha(x)) (\alpha(x)-\delta(x))} u^{\alpha(x)-\delta(x)},
\end{align*}
as claimed in the Lemma.
\end{proof}

\begin{proof}[Proof of Theorem \ref{thm:cond-approx-SL}]
	Note that $\mathcal{A}^*f_u(x) = \mathcal{J}^*f_u(x)$ by our choice of $f$ satisfying (F). 
	Then Proposition \ref{prop:cond-approx} and Lemma \ref{lem:locstable-generator} yield
	\begin{align*}
		&\quad \left| E(f_u(X_{t+h}-X_t))|X_t=x) - hu^{\alpha(x)}r(x) f^{[\alpha(x)]}(0)  \right| \\
		&\leq  \left| E(f_u(X_{t+h}-X_t))|X_t=x) - h\mathcal{A}^*f_u(x)  \right| \,+\,  \left| h\mathcal{A}^*f_u(x) - hu^{\alpha(x)}r(x) f^{[\alpha(x)]}(0)  \right| \\
		&\leq h^2\|\mathcal{A}^2\tau_xf_u\|_{\infty} + K_x \tilde{C}_\rho u^{\alpha(x)-\delta(x)}(\|f''\|_{\infty} + \|f\|_{\infty}).
	\end{align*}
	Furthermore, 
	\begin{align*}
		&\quad \left| \Var(f_u(X_{t+h}-X_t)|X_t=x) - hu^{\alpha(x)}r(x) (f^2)^{[\alpha(x)]}(0) \right| \\
		&\leq h^2 \|\mathcal{A}^2\tau_xf_u^2\|_{\infty} + 2h^2 |\mathcal{A}^*f_u(x)|^2 + 2h^4 \|\mathcal{A}^2\tau_xf_u\|_{\infty}\\
		&\qquad+ K_x \tilde{C}_\rho u^{\alpha(x)-\delta(x)}(\|(f^2)''\|_{\infty} + \|f^2\|_{\infty})
	\end{align*}
	It thus suffices to bound the terms $\|\mathcal{A}^2\tau_x f_u\|_\infty, \|\mathcal{A}^2\tau_x f_u^2\|_\infty$ and $|\mathcal{A}^*f_u(x)| = |\mathcal{J}^*f_u(x)|$.
	
	By Theorem \ref{thm:A2full}, we have for $q=(\tau\vee p_V \vee p_D)+1$, and $u\geq 1$, \begin{align}
		\nonumber\|\mathcal{A}^2\tau_x f_u\|_\infty 	&\leq K \left( \|\tau_xf_u'\|_{\infty,q} + \|\tau_xf_u''\|_{\infty, q}+ \|\tau_xf_u'''\|_{\infty,q-1} + \|\tau_xf_u''''\|_{\infty, 0}\right) \\
		\label{eqn:cond-approx-A2}& \leq u^4 K \left( \|f'\|_{\infty,q} + \|f''\|_{\infty, q}+ \|f'''\|_{\infty,q-1} + \|f''''\|_{\infty, 0}\right) (|x|\vee 1)^{q} 2^{q},
	\end{align}
	where we applied Lemma \ref{lem:normsup} in the last step. 
	The term $u^4$ occurs since $\|f_u''''\|_{\infty, 0} = u^4\|f\|_{\infty, 0}$. 
	The same bound holds for $\|\mathcal{A}^2\tau_x f_u^2\|_\infty$. Moreover, \begin{align}
		\label{eqn:cond-approx-J}|\mathcal{J}^*f_u(x)| 
		&\leq u^{\alpha(x)}r(x) f^{[\alpha(x)]}(0) + |\mathcal{J}^*f_u(x) - u^{\alpha(x)}r(x) f^{[\alpha(x)]}(0)|\\
		\nonumber &\leq u^{\alpha(x)}r(x)f^{[\alpha(x)]}(0) + K_x \tilde{C}_\rho(\|f''\|_\infty + \|f\|_\infty) u^{\alpha(x)-\delta(x)}\\
		\nonumber&\leq \tilde{K}_x\tilde{K}_f u^{\alpha(x)}.
	\end{align}
	To see that the bound of \eqref{eqn:cond-approx-J} factorizes as $\tilde{K}_x \tilde{K}_f$, we recall that $|f^{[\alpha(x)]}(0)|\leq 2\|f\|_\infty/(2-\alpha(x)) + \|f''\|_\infty /\alpha(x)$, as stated prior to Lemma \ref{lem:locstable-generator}. The constant $K_x$ appearing in \eqref{eqn:cond-approx-J}, which origins from Lemma \ref{lem:locstable-generator} can be simplified, since under (SL2), $\alpha$ is bounded away from two and $r(x)$ is upper bounded. Therefore, $\tilde{K}_x$ above is of the form $\tilde{K}_x \propto \frac{1}{\alpha(x)(\alpha(x)-\delta(x))}$.
	
	Due to \eqref{eqn:cond-approx-A2} and \eqref{eqn:cond-approx-J}, the upper bound in Proposition \ref{prop:cond-approx} is of the order $\mathcal{O}(h^2u^4 + h^4u^8 + h^2u^{2\alpha(x)})$. Together with the approximation error of Lemma \ref{lem:locstable-generator}, this yields the desired result.
\end{proof}

\subsection{Nonparametric inference}

\subsubsection{Ergodicity}\label{sec:ergodicity}

In this paragraph, we briefly discuss how the results of \cite{masuda2007ergodicity} may be applied in our situation to derive the geometric mixing property.
Theorem 2.2 and Lemma 2.4 therein ensure that $X_t$ given by \eqref{eqn:SDEdef} is geometrically mixing if the following assumptions hold:
\begin{enumerate}[(i)]
	\item The functions $\mu, \sigma$ are Lipschitz continuous, and the jump transformation $c(x,z)$ satisfies, for all $x_1,x_2, z_1, z_2 \in\R$, and some constant $C$, \begin{align*}
		c(x_1,0)=0,\quad |c(x_1, z_1) - c(x_2, z_1)| \leq C \,|z_1| \,|x_1-x_2|, \\ |c(x_1,z_1) - c(x_1, z_2)| \leq C|z_1-z_2|. 
	\end{align*}
	\item For some $h>0$, the transition probability $P(X_h \in dz|X_0=x)$ admits a Lebesgue density $p_h(x,z)$ which is bounded on compact sets in $x$.
	\item For every $x\in\R$ and every open set $U\subset \R$, there exists a $z\in\text{supp}(\nu)$ such that $c(x,z)\in U$.
	\item For some $q\in(0,2)$, it holds that $|\sigma(x)|\leq |x|^{1-\frac{q}{2}}$, and $\int_{|z|\geq 1} |z|^q \nu(dz) <\infty$.
	\item For some $C\in(0,\infty)$, $x\mu(x) \leq -C|x|^2$. 
\end{enumerate}

Condition (v) is crucial, as it requires the drift to induce a mean-reversion. 
The remaining conditions specify various forms of regularity. 
As the focus of the present paper is a flexible modeling of the jump part, we demonstrate that the ergodicity result of \cite{masuda2007ergodicity} also allows for a state-dependent jump activity index $\alpha(x)$.

To this end, let $\nu$ be given by $\nu(dz) = \alpha_0 |z|^{-1-\alpha_0}\, dz$ for some $\alpha_0\in(0,2)$.
Now let $c(x,z) = \text{sign}(z) \left(|z|^{\frac{\alpha}{\alpha(x)}} \wedge |z|\right)$, for a Lipschitz continuous function $x\mapsto\alpha(x)\in(0,\alpha_0)$. 
Then (i) and (iii) above are clearly satisfied, such that geometric ergodicity holds if $\sigma(x)$ and $\mu(x)$ are specified suitably.
Moreover, to compute the jump density $\rho(x,z)$, note that for $a>0$,
\begin{align*}
	\int_{c(x,z)>a} \nu(dz) = \nu\left(( a^\frac{\alpha(x)}{\alpha_0} \vee a,\infty)\right) = (a^\frac{\alpha(x)}{\alpha_0} \vee a)^{-\alpha_0} = a^{-\alpha(x)} \vee a^{-\alpha_0}.
\end{align*}
The analogous equation holds for the negative part, $a<0$, such that 
\begin{align*}
	\rho(x,z) = \begin{cases}
		\alpha(x) |z|^{-1-\alpha(x)},& |z|\leq 1 \\
		\alpha_0 |z|^{-1-\alpha_0},& |z|>1.
	\end{cases}
\end{align*}
Thus, this choice of $c(x,z)$ complies with our setup, i.e.\ (J1) and (J2).
This simple example demonstrates that the result of \cite{masuda2007ergodicity} allows for a rather flexible state-dependence of the jump intensity.

\subsubsection{Drift estimation}

The denominator $\hat{m}_n(x)$ can be shown to be consistent, using a concentration inequality for sums of dependent random variables \citep[Lemma 1.3]{bosq2012nonparametric}. In particular, under the mixing and stationarity assumptions of (K2), \cite{long2013nadaraya} use this method to establish the following result (Lemma 4.1 therein). Though they only claim convergence in probability, their proof also yields almost sure convergence by an application of the Borel-Cantelli lemma.

\begin{lemma}[\cite{long2013nadaraya}]\label{lem:kde-consistency}
	Let $G$ be a kernel satisfying (K1). If (K2) holds, and $Tb\to\infty$, then
	\begin{align*}
		\hat{m}_n(x) \overset{a.s.}{\longrightarrow} m(x) \int G(y)dy,\quad \text{as }n\to\infty.
	\end{align*}
\end{lemma}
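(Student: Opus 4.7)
The plan is to split the error into a deterministic bias term and a stochastic fluctuation term:
\begin{align*}
\hat{m}_n(x) - m(x)\int G(y)\,dy = \bigl(\hat{m}_n(x) - E[\hat{m}_n(x)]\bigr) + \bigl(E[\hat{m}_n(x)] - m(x)\int G(y)\,dy\bigr),
\end{align*}
and to handle each piece separately. For the bias, stationarity under (K2) gives
\begin{align*}
E[\hat{m}_n(x)] = E[G_b(X_{t_1}-x)] = \int G_b(y-x)\,m(y)\,dy = \int G(u)\,m(x+bu)\,du,
\end{align*}
and since $G$ is compactly supported (K1) and $m$ is the stationary density (continuous at $x$ for a geometrically mixing jump diffusion of this form), dominated convergence yields $E[\hat{m}_n(x)] \to m(x)\int G(y)\,dy$ as $b\to 0$.

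For the stochastic term, the main task is to show $\hat{m}_n(x) - E[\hat{m}_n(x)] \to 0$ almost surely. A direct variance bound gives $\mathrm{Var}(G_b(X_{t_1}-x)) = \mathcal{O}(b^{-1})$, but summing $n$ such variances would only yield $\mathcal{O}(1/(nb))$, which ignores the strong correlation between nearby observations $X_{t_i}, X_{t_{i+1}}$ induced by the small step $h$. The right scaling is $1/(Tb)$, and to obtain it one must exploit the geometric $\alpha$-mixing assumed in (K2). The natural tool is a Bernstein-type exponential inequality for $\alpha$-mixing sequences (such as Bosq's Lemma 1.3), applied to the bounded, mean-zero random variables $Y_i = G_b(X_{t_i}-x) - E[G_b(X_{t_i}-x)]$, which satisfy $\|Y_i\|_\infty = \mathcal{O}(1/b)$ and $\mathrm{Var}(Y_i) = \mathcal{O}(1/b)$.

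The implementation proceeds by a blocking argument: partition $\{1,\dots,n\}$ into $2k$ alternating big and small blocks of lengths $p$ and $q$ respectively, with $p,q \to \infty$ and $pq = o(n)$. Using geometric mixing, the sums over the big blocks can be coupled (up to an exponentially small error in $q$) to independent copies, and Bernstein's inequality then gives, for any $\eps>0$,
\begin{align*}
P\bigl(|\hat{m}_n(x) - E[\hat{m}_n(x)]| > \eps\bigr) \leq 2\exp\bigl(-c\,\eps^2 T b\bigr) + k\,\beta(q),
\end{align*}
for some $c>0$, with $\beta(q)$ decaying geometrically. Choosing $q = q_n = C\log n$ for $C$ sufficiently large makes the mixing remainder summable, and the condition $Tb\to\infty$ makes the exponential part summable; the Borel--Cantelli lemma then delivers almost sure convergence.

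The main obstacle is the careful tuning of the block lengths $p,q$ in the Bernstein--blocking inequality so that the effective variance comes out as $1/(Tb)$ rather than $1/(nb)$, and so that both the Gaussian tail and the mixing remainder are simultaneously summable in $n$. Once this rate is in hand, the rest is routine.
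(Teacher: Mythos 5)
Your proposal follows essentially the same route as the paper, which does not give an independent proof but defers to Lemma 4.1 of \cite{long2013nadaraya}: a standard bias computation for the kernel term plus Bosq's exponential inequality for geometrically $\alpha$-mixing sequences (the blocking/Bernstein argument you describe), upgraded to almost sure convergence by Borel--Cantelli exactly as the paper remarks. The only point to watch is that summability of the exponential tail really requires $Tb$ to grow at least logarithmically in $n$, a caveat that applies equally to the paper's own citation-based argument.
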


We now proceed to establish consistency and asymptotic normality of the nonparametric drift estimator.

\begin{proof}[Proof of Theorem \ref{thm:drift-clt}]
We decompose the error of estimation into a martingale term $M_n$ and a bias term $A_n+B_n$ as \begin{align*}
	\hat{\mu}_n(x)-\mu(x) &= \frac{1}{\hat{m}_n(x)}\frac{1}{n} \sum_{i=1}^n \frac{f(X_{t_{i+1}} - X_{t_i})-E\left(f(X_{t_{i+1}} - X_{t_i})|X_{t_i}\right)}{hf'(0)} G_b(X_{t_i}-x)\\
	&\qquad + \frac{1}{\hat{m}_n(x)}\frac{1}{n} \sum_{i=1}^n \left\{\frac{E\left(f(X_{t_{i+1}} - X_{t_i})|X_{t_i}\right)}{hf'(0)} - \mu(X_{t_i})\right\} G_b(X_{t_i}-x) \\
	&\qquad + \frac{1}{\hat{m}_n(x)}\frac{1}{n} \sum_{i=1}^n \left\{\mu(X_{t_i}) - \mu(x)\right\} G_b(X_{t_i}-x) \\
	&=\frac{M_n}{\hat{m}_n(x)} + A_n + B_n.
\end{align*}
Since $\mu$ is continuously differentiable by assumption (D) and $G$ is compactly supported, we have $|B_n|\leq b \tilde{C}_{\mu,x}$. Moreover, Proposition \ref{prop:cond-approx} and Theorem \ref{thm:A2full} guarantee that \begin{align*}
	|A_n| & \leq h \sup_{|y-x|\leq b}\|\mathcal{A}^2\tau_y f\|_{\infty,0} f'(0)^{-1} \\
	&\leq hK\sum_{k=1}^4 \|\tau_x f^{(k)}\|_{\infty, (\tau\vee p_V \vee p_D)+1} 
	\quad \leq h C_f ((|x|+b) \vee 1)^{(\tau\vee p_V \vee p_D)+1}.
\end{align*}
The factor $C_f$ is finite since all derivatives of $f$ decay rapidly. Since $x$ is fixed, we obtain $A_n=\mathcal{O}(h)$.

The term $M_n$ is a martingale, and we need to control its conditional variance in order to obtain a central limit theorem. Denote $G_b^2(y)=G^2(y/b)/b$. We are led to study the term \begin{align}
	\label{eqn:driftclt-var}&\quad \frac{1}{n^2} \sum_{i=1}^n \Var\left(\frac{f(X_{t_{i+1}}-X_{t_i})}{hf'(0)}G_b(X_{t_i}-x)|X_{t_i}\right) \\
	\nonumber&= \frac{1}{b\,n^2h^2f'(0)^2} \sum_{i=1}^n \Var(f(X_{t_{i+1}}-X_{t_i})|X_{t_i})G^2_b(X_{t_i}-x)\\
	\nonumber&=\frac{1}{f'(0)^2} \frac{1}{Tb} \frac{1}{n} \sum_{i=1}^n (\mathcal{A}^*f^2(x) + a_{n,i} + a_{n,i}') G_b^2(X_{t_i}-x),
\end{align}
for $|a_{n,i}|\leq h \|\mathcal{A}^2\tau_{X_{t_i}}f^2\|_\infty + 2 h |\mathcal{A}^*f(X_{t_i})|^2 + 2 h^3 \|\mathcal{A}^2\tau_{X_{t_i}} f\|_{\infty}^2$ by Proposition \ref{prop:cond-approx}, and an additional smoothing error $|a_{n,i}'| = |\mathcal{A}^*f^2(X_{t_i}) -\mathcal{A}^*f^2(x)|$. By Theorem \ref{thm:A2full}, $|a_{n,i}|\leq h C_f$ for $|X_{t_i}-x|\leq b$. Moreover, by Lemma \ref{lem:kde-consistency}, $\frac{1}{n}\sum_{i=1}^n G^2_b(X_{t_i}-x) \to m(x) \int_{-1}^1 G^2(y)dy$ in probability under the assumptions of Theorem \ref{thm:drift-clt}. Then \begin{align*}
	&\quad \frac{1}{f'(0)^2} \frac{1}{Tb} \frac{1}{n} \sum_{i=1}^n (\mathcal{A}^*f^2(x) + a_{n,i} + a_{n,i}') G_b^2(X_{t_i}-x) \\
	&= \frac{\mathcal{A}^*f^2(x)}{Tb} \left(\frac{m(x) \int G^2}{f'(0)^2}+o_P(1) \right) + \mathcal{O}_P\left(\frac{h}{Tb}\right) + \mathcal{O}_P\left( \frac{1}{Tb} \sup_{|y-x|\leq b} \left|\mathcal{A}^*f(y)-\mathcal{A}^*f(x)\right|\right).
\end{align*}
But $\mathcal{A}^*f^2(x)=\mu(x)(f^2)'(0) + \sigma^2(x)(f^2)''(0)/2 + \mathcal{J}^*f^2(x)$ is continuously differentiable by assumptions (D), (V) and Lemma \ref{lem:genstar-smooth}, such that \begin{align*}
	 \mathcal{O}_P\left( \frac{1}{Tb} \sup_{|y-x|\leq b}\left|\mathcal{A}^*f(y)-\mathcal{A}^*f(x)\right|\right)
	 \leq \mathcal{O}_P(b/Tb)
	 = \mathcal{O}_P(1/T).
\end{align*}
The conditional variance expression \eqref{eqn:driftclt-var} is thus of the form \begin{align*}
	\eqref{eqn:driftclt-var} = \frac{m(x)\mathcal{A}^*f^2(x)}{Tb} \frac{m(x)\int G^2}{f'(0)^2}\left(1+o_P(1)\right).
\end{align*}
Hence, if $b\to 0, h\to 0, Tb\to\infty$, we have 
\begin{align*}
	\hat{\mu}_n(x)-\mu(x) = \frac{M_n}{\hat{m}_n(x)} + A_n + B_n = \mathcal{O} \left(\frac{1}{\sqrt{Tb}}\right) + \mathcal{O}(h) + \mathcal{O}(b) \pconv 0,
\end{align*}
and thus consistency. If $b,h=o\left((Tb)^{-1/2}\right)$, then the martingale part $M_n$ dominates and we can apply a standard central limit theorem for martingales to obtain asymptotic normality (e.g.\ \cite[Thm.\ 7.7.3]{durrett2010probability}). In particular, Lindeberg's condition holds because the increments of $M_n$ are bounded as \begin{align*}
	\left|\frac{f(X_{t_{i+1}}-X_{t_i}) - E\left( f(X_{t_{i+1}}-X_{t_i})|X_{t_i} \right)}{nhf'(0)} G_b(X_{t_i}-x)\right| \leq \frac{1}{Tb}\|f\|_\infty \|G\|_\infty \to 0.
\end{align*}
Hence, the martingale difference terms are bounded. 
The factor $\hat{m}_n(x)$ can be handled by Slutsky's Lemma.
\end{proof}

For the jump-filtered drift estimator, we require the following result.

\begin{lemma}\label{lem:drift-scale}
	Suppose (J1) and (J2) hold, and let $f$ satisfy (F').
	Then, as $u\to\infty$,
	\begin{align*}
		\frac{\mathcal{A}^*f^2_{u}(x)}{u^2} \to \sigma^2(x) f'(0)^2.
	\end{align*}
\end{lemma}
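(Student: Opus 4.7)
The plan is to decompose $\mathcal{A}^*f^2_u(x)$ into its drift, diffusion, and jump contributions, observe that the drift part vanishes and the diffusion part is already equal to the desired limit times $u^2$, and then show that the remaining jump part is of smaller order than $u^2$.

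First I would record the relevant values of $f_u^2$ and its first two derivatives at $0$. Since $f(0)=0$ and $f'(0)\neq 0$ by (F'), differentiating $y\mapsto f(uy)^2$ yields $f_u^2(0)=0$, $(f_u^2)'(0)=2f(0)f'(0)u=0$, and $(f_u^2)''(0)=2f'(0)^2 u^2$. Plugging these into the definition \eqref{eqn:Astardef} of $\mathcal{A}^*$ gives
\begin{align*}
\mathcal{A}^*f_u^2(x) \;=\; \sigma^2(x)\,f'(0)^2\,u^2 \;+\; \mathcal{J}^*f_u^2(x),
\qquad
\mathcal{J}^*f_u^2(x) \;=\; \int_{\R} f(uz)^2\,\rho(x,z)\,dz,
\end{align*}
where the drift term drops out and the compensator correction in the jump integral vanishes because $(f_u^2)'(0)=0$. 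Dividing by $u^2$, the claim reduces to $\mathcal{J}^*f_u^2(x)/u^2\to 0$.

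Next I would bound $\mathcal{J}^*f_u^2(x)$ by splitting the integration domain at $|z|=1/u$ and $|z|=1$, using (J2) on $\rho(x,z)$ together with the two elementary bounds $|f(uz)|\leq \|f\|_{\infty}$ and $|f(uz)|\leq \|f'\|_{\infty}\,u|z|$ (the latter follows from $f(0)=0$ via the mean value theorem; $\|f'\|_\infty<\infty$ by (F')). On $\{|z|\leq 1/u\}$ I would use the Lipschitz bound together with $\rho(x,z)\leq C_\rho|z|^{-1-\alpha}$, giving a contribution of order $u^2\int_0^{1/u} z^{1-\alpha}dz = O(u^{\alpha})$ since $\alpha<2$. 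On $\{1/u<|z|\leq 1\}$ I would use the uniform bound on $f$ together with the same tail of $\rho$, giving $O(\int_{1/u}^{1}|z|^{-1-\alpha}dz)=O(u^{\alpha})$. On $\{|z|>1\}$ the bound $\rho(x,z)\leq C_\rho|z|^{-1-\tau}$ with $\tau>1$ makes $\int_{|z|>1}\rho(x,z)\,dz$ finite, so this piece is $O(1)$. Summing, $\mathcal{J}^*f_u^2(x) = O(u^{\alpha})+O(1)$, and since $\alpha<2$ we obtain $\mathcal{J}^*f_u^2(x)/u^2\to 0$, completing the argument.

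There is no real obstacle: everything reduces to elementary integral estimates built on (J2) and the mild regularity of $f$. The one point that requires a bit of care is the choice of splitting at $|z|=1/u$, which is needed to get the sharp exponent $u^{\alpha}$ instead of the useless $u^2$ one would obtain by using the Lipschitz bound alone on all of $\{|z|\leq 1\}$; this is exactly the same mechanism as in Lemma \ref{lem:locstable-generator}, though here the stable-like structure is not assumed and one only needs the much cruder bound on $\rho$ provided by (J2).
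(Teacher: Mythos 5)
Your argument is correct and follows essentially the same route as the paper: the same reduction to $\mathcal{J}^*f_u^2(x)/u^2\to 0$ via $(f_u^2)'(0)=0$ and $(f_u^2)''(0)=2f'(0)^2u^2$, and the same three-way split of the jump integral at $|z|=1/u$ and $|z|=1$ yielding $\mathcal{O}(u^{\alpha})+\mathcal{O}(1)$. The only cosmetic difference is that you bound the innermost region via the Lipschitz estimate $|f(uz)|\leq\|f'\|_\infty u|z|$, while the paper uses the second-order Taylor bound $\|(f^2)''\|_\infty u^2z^2$ on $g=f^2$; both give the same $\mathcal{O}(u^{\alpha})$ contribution.
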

\begin{proof}[Proof of Lemma \ref{lem:drift-scale}]
	We have $\mathcal{A}^*f^2_u(x) = \sigma^2(x)u^2(f^2)''(0)/2 + \mathcal{J}^*f^2_u(x)$.
	Since $(f^2)''(0)=2f'(0)^2$ for our choice of $f$, it suffices to show that $\mathcal{J}^*f^2_u(x)/u^2\to 0$.
	For $g=f^2$, we write
	\begin{align*}
		\mathcal{J}^*g_u(x) 
		&= \int \left[g(uz)-g(0)-uzg'(0)\mathds{1}_{|z|\leq 1}\right] \rho(x,z)\, dz \\
		&= \int \left[g(uz)-g(0)-uzg'(0)\mathds{1}_{|z|\leq 1/u}\right] \rho(x,z)\, dz \\
		&\leq 2C_\rho\,\|g\|_{\infty} \int_{[-1,1]^c} |z|^{-1-\tau}\, dz + 4C_\rho \|g\|_\infty \int_{1/u}^{1} |z|^{-1-\alpha}\, dz \\
		&\qquad + \int_{-\frac{1}{u}}^{\frac{1}{u}} \left[g(uz)-g(0)-uzg'(0)\right]\rho(x,z)\,dz \\
		&\leq 4C_\rho \frac{\|g\|_\infty}{\tau} + 4C_\rho \|g\|_{\infty} \frac{u^\alpha}{\alpha} + \frac{1}{u}\int_{-1}^{1} \left[g(z)-g(0)-zg'(0)\right]\rho(x,z/u)\, dz.
	\end{align*}
	The last integral can be bounded as 
	\begin{align*}
		\frac{1}{u}\int_{-1}^{1} \left[g(z)-g(0)-zg'(0)\right]\rho(x,z/u)\, dz 
		&\leq \frac{C_\rho \|g''\|_\infty}{u} \int_{-1}^{1} |z|^2 |z/u|^{-1-\alpha}\, dz \\
		&\leq u^\alpha\frac{C_\rho \|g''\|_\infty}{2-\alpha}.
	\end{align*}
	That is, $\mathcal{J}^*g_u(x) = \mathcal{O}(u^\alpha)$, and $\alpha<2$ implies that $\mathcal{J}^*f^2_u(x)/u^2\to 0$ as $u\to\infty$.
	This completes the proof.
\end{proof}

\begin{proof}[Proof of Theorem \ref{thm:drift-clt-filtered}]
	Just as in the proof of Theorem \ref{thm:drift-clt} for the non-filtered estimator, we decompose $\hat{\mu}_n^*(x)-\mu(x)=M_n^*/\hat{m}_n(x) + A^*_n + B_n$ by replacing $f$ by $f_u$.
	Then $|B_n|=\mathcal{O}(b)$ as before, whereas 
	\begin{align*}
		|A_n^*| & \leq h K \sum_{k=1}^4\|\tau_x f_{u_n}^{(k)}\|_{(\tau\vee p_V\vee p_D)+1} = \mathcal{O}(hu^4).
	\end{align*}
	The last step uses that $f_u^{(k)}(x) = u^k f^{(k)}(x)$, and we bound the weighted norm via Lemma \ref{lem:normsup}.
	
	The martingale term $M_n^*$ can be studied via 
	\begin{align*}
		&\quad \frac{1}{n^2} \sum_{i=1}^n \Var\left(\frac{f_u(X_{t_{i+1}}-X_{t_i})}{hf_u'(0)} G_b(X_{t_i}-x) \Big|X_{t_i} \right) \\
		&= \frac{1}{Tb}\frac{1}{n} \frac{1}{u_n^2 f'(0)^2} \sum_{i=1}^n (\mathcal{A}^*f^2_u(x) + a_{n_i} + a'_{n,i}) G_b(X_{t_i}-x),
	\end{align*}
	where 
	\begin{align*}
		|a_{n,i}| 
		& \leq h\|\mathcal{A}^2\tau_{X_{t_i}}f^2_u\|_{\infty} + 2h |\mathcal{A}^*f(X_{t_i})|^2 + 2h^3\|\mathcal{A}^2\tau_{X_{t_i}}f\|^2_\infty, \\
		|a'_{n,i}| 
		& = |\mathcal{A}^*f_u(X_{t_i}) - \mathcal{A}^*f_u(x)|.
	\end{align*}
	From Theorem \ref{thm:A2full}, we know that $|a_{n,i}| = \mathcal{O}(hu^4)$ for $|X_{t_i}-x|\leq b$.
	Furthermore, $\frac{d}{dx} \mathcal{A}^*f^2_u(x) = \mathcal{O}(u^2)$, such that $|a'_{n,i}|=\mathcal{O}(bu^2)$ for $|X_{t_i}-x|\leq b$.
	Moreover, $\frac{1}{n} \sum_{i=1}^n G_b^2(X_{t_{i+1}}-X_{t_i}) \to m(x)\int G(y) dy$ in probability.
	Hence, 
	\begin{align*}
		 &\quad \frac{1}{Tb}\frac{1}{n} \frac{1}{u_n^2 f'(0)^2} \sum_{i=1}^n (\mathcal{A}^*f^2_u(x) + a_{n_i} + a'_{n,i}) G_b(X_{t_i}-x) \\
		 &= \frac{\mathcal{A}^*f_u^2(x)}{Tbu^2}\left( \frac{m(x)\int G^2(y)dy}{f'(0)^2} + o_P(1) \right) + \mathcal{O}_P\left( \frac{hu_n^2}{Tb} \right) + \mathcal{O}_P\left( \frac{b}{Tb} \right)\\
		 &= \frac{\sigma^2(x)}{Tb} \left( 1+o_P(1) \right),
	\end{align*} 
	where we use $hu_n^2\to 0$ and Lemma \ref{lem:drift-scale} in the last step.
	The Lindeberg condition can be checked as in the proof of \ref{thm:drift-clt}.
	Thus, $\sqrt{Tb}M_n^*/\hat{m}_n(x)$ is asymptotically normal by Slutsky's Lemma, with asymptotic variance as specified in the theorem.
	
	The additional constraints on $u_n$ ensure that $\sqrt{Tb}A_n^* = o_P(1)$ and $\sqrt{Tb}B_n = o_P(1)$, completing the proof.
\end{proof}

\begin{proof}[Proof of Theorem \ref{thm:vola-consistent}]
We decompose
\begin{align*}
	\hat{\sigma}^2_n(x) - \sigma(x) 
	&= \frac{1}{\hat{m}_n(x)}\frac{1}{n} \sum_{i=1}^n \frac{f_{u_n}(X_{t_{i+1}} - X_{t_i})-E\left(f_{u_n}(X_{t_{i+1}} - X_{t_i})|X_{t_i}\right)}{h u_n^2 f''(0)/2} G_b(X_{t_i}-x)\\
	&\qquad + \frac{1}{\hat{m}_n(x)}\frac{1}{n} \sum_{i=1}^n \left\{\frac{E\left(f_{u_n}(X_{t_{i+1}} - X_{t_i})|X_{t_i}\right)}{hu_n^2f''(0)/2} - \sigma^2(X_{t_i})\right\} G_b(X_{t_i}-x) \\
	&\qquad + \frac{1}{\hat{m}_n(x)}\frac{1}{n} \sum_{i=1}^n \left\{\sigma^2(X_{t_i}) - \sigma^2(x)\right\} G_b(X_{t_i}-x) \\
	&=\frac{M_n}{\hat{m}_n(x)} + A_n + B_n.
\end{align*}
Just as in the proof of Theorem \ref{thm:drift-clt-filtered}, we obtain that $M_n = \mathcal{O}_P(1/\sqrt{Tb})$, $B_n = \mathcal{O}_P(b)$, and $\hat{m}_n(x) \pconv m(x)$.
Regarding the term $A_n$, we use Proposition \ref{prop:cond-approx} to obtain
\begin{align*}
	E\left(f_{u_n}(X_{t_{i+1}} - X_{t_i})|X_{t_i}\right) 
	&= h\mathcal{A}^* f_{u_n}(X_{t_i}) + h^2 \mathcal{O}\left(\sup_{|y-x|\leq b} \| \mathcal{A}^2 \tau_y f \|_\infty\right) \\
	&= h u_n \mu(x) f'(0) + \frac{\sigma^2(x) u_n^2}{2} f''(0) + \mathcal{J}^*f_{u_n}(x) + h^2 u_n^4 C_f.
\end{align*}
In the last step, we applied Theorem \ref{thm:A2full}, and $C_f$ is a constant depending on $f$.
By our assumption on $f$, this yields
\begin{align*}
	A_n 
	&=\frac{1}{\hat{m}_n(x)}\frac{1}{n} \sum_{i=1}^n \left\{\frac{E\left(f_{u_n}(X_{t_{i+1}} - X_{t_i})|X_{t_i}\right)}{hu_n^2f''(0)/2} - \sigma^2(X_{t_i})\right\} G_b(X_{t_i}-x) \\
	&= \mathcal{O}_P(h\,u_n^2) + \mathcal{O}_P\left( \sup_{|y-x|\leq b} |\mathcal{J}^* f_{u_n}(y)| / u_n^2 \right).
\end{align*}

Now note that, along the same lines as in the proof of Lemma \ref{lem:drift-scale}, we may show that under assumptions (J1) and (J2), $|\mathcal{J}^*f_{u_n}(x)| \leq C_{\rho, f} u_n^\alpha$ for all $x$.
Hence, we find that $\sup_{|y-x|\leq b} \mathcal{J}^*f_{u_n}(y)/u_n^2 =\mathcal{O}(u_n^{\alpha-2})$.
Thus, if $Tb\to \infty$ and $u_n\to\infty$, we obtain $\hat{\sigma}_n^2(x)\pconv \sigma^2(x)$.
\end{proof}

\subsubsection{Jump activity estimation}

Recall the definitions
\begin{align*}
	\hat{m}_n(x) &= \frac{1}{n} \sum_{i=1}^n G_b(X_{t_i}-x) \\
	\hat{R}_n(x) = \hat{R}_n(x,1) &= \frac{1}{\hat{m}_n(x)}\frac{1}{n} \sum_{i=1}^n \frac{f(u_n(X_{t_{i+1}} - X_{t_i}))}{hu^{\alpha(x)}} G_b(X_{t_i}-x)
\end{align*}

The investigation of $\hat{\alpha}_n(x,\gamma)$ in the stable-like case is based on the following theorem, the proof of which is presented in the sequel. As a consequence, we will obtain Theorem \ref{thm:alpha-clt} by an application of the delta method.
Throughout this section, we will employ the shorthand $\hat{\alpha}_n(x) = \hat{\alpha}_n(x,\gamma)$ and $\hat{R}^*_n(x)=\hat{R}^*_n(x,\gamma)$.

\begin{theorem}\label{thm:kernel-clt}
	Let (J1), (D), (V), (SL1), (SL2) and (K1)-(K3) hold, with $\psi_n=\sqrt{Tbu^{\alpha(x)}}$. Then for any $f$ satsfying (F),
	\begin{align*}
		\sqrt{Tb u^{\alpha(x)}}\left[ \hat{R}_n(x) - r(x)f^{[\alpha(x)]}(0) \right] \Rightarrow \mathcal{N}(0, s^2), 
	\end{align*}
	with asymptotic variance $s^2=\frac{r(x)}{m(x)} (f^2)^{[\alpha(x)]}(0) \int G^2(y)dy$. If instead $\psi_n=o(\sqrt{Tbu^{\alpha(x)}})$ holds, then $\psi_n|\hat{R}_n(x) - r(x)f^{[\alpha(x)]}(0)|\pconv 0$ as $n\to\infty$. 
\end{theorem}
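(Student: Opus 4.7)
The plan is to follow the martingale-plus-bias decomposition already used for Theorem \ref{thm:drift-clt}, but with the added twist that our target now depends on the unknown scaling $u^{\alpha(x)}$, which will drive both the rate and the shape of the bias. Writing $F_i = f_{u_n}(X_{t_{i+1}}-X_{t_i})$ and $c_i = E(F_i\mid X_{t_i})$, and using that $\hat m_n(x)\to m(x)$ almost surely by Lemma \ref{lem:kde-consistency} (since $\int G=1$), I would split
\begin{align*}
\hat{R}_n(x) - r(x)f^{[\alpha(x)]}(0)
 &= \frac{M_n}{\hat m_n(x)} + \frac{A_n}{\hat m_n(x)} + \frac{B_n}{\hat m_n(x)},\\
M_n &= \tfrac{1}{n}\sum_i \tfrac{F_i-c_i}{hu_n^{\alpha(x)}}G_b(X_{t_i}-x),\\
A_n &= \tfrac{1}{n}\sum_i \tfrac{c_i - h u_n^{\alpha(X_{t_i})} r(X_{t_i}) f^{[\alpha(X_{t_i})]}(0)}{hu_n^{\alpha(x)}}G_b(X_{t_i}-x),\\
B_n &= \tfrac{1}{n}\sum_i \Bigl[u_n^{\alpha(X_{t_i})-\alpha(x)} r(X_{t_i}) f^{[\alpha(X_{t_i})]}(0) - r(x)f^{[\alpha(x)]}(0)\Bigr]G_b(X_{t_i}-x).
\end{align*}

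For $A_n$, Theorem \ref{thm:cond-approx-SL} bounds each summand in the numerator by $\tilde K_f\tilde K_{X_{t_i}}(h^2 u_n^4 + h u_n^{\alpha(X_{t_i})-\delta(X_{t_i})})$. Since $G_b$ restricts the sum to $|X_{t_i}-x|\le b$, and $\alpha,\delta$ are continuous and bounded away from the critical endpoints under (SL2), $\tilde K_{X_{t_i}}$, $\alpha(X_{t_i})$, $\delta(X_{t_i})$ can be replaced uniformly by constants depending on $x$, yielding $|A_n| = O_P(h u_n^{4-\alpha(x)} + u_n^{-\delta(x)})$. Multiplying by $\psi_n$, the first two rate conditions of (K3) give $\psi_n A_n \to 0$.

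For the smoothing bias $B_n$, the main subtlety is the factor $u_n^{\alpha(y)-\alpha(x)}$. Since $\alpha\in\mathcal{C}^2$ and $|X_{t_i}-x|\le b$ on the support of $G_b$, a first-order Taylor expansion gives $\alpha(X_{t_i})-\alpha(x)=O(b)$, so
\[
u_n^{\alpha(X_{t_i})-\alpha(x)} = \exp\bigl((\alpha(X_{t_i})-\alpha(x))\log u_n\bigr) = 1 + O(b\log u_n),
\]
and similarly $r(X_{t_i})f^{[\alpha(X_{t_i})]}(0)-r(x)f^{[\alpha(x)]}(0) = O(b)$ by the smoothness of $r$, $\alpha$ and the smoothness of $\alpha\mapsto f^{[\alpha]}(0)$ on compact subsets of $(0,2)$. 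Hence $|B_n| = O_P(b\log u_n)$, which is annihilated by $\psi_n$ via the third condition in (K3). This is the step I expect to be the main obstacle: getting the exact $b\log u_n$ rate (rather than a worse $b(\log u_n)^{\kappa}$) requires carefully carrying the first-order expansion of all three $x$-dependent factors simultaneously and checking that the second-order remainder $(b\log u_n)^2$ is also killed by $\psi_n$.

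It remains to show that $\sqrt{Tbu_n^{\alpha(x)}}\,M_n/\hat m_n(x) \Rightarrow \mathcal{N}(0,s^2)$. The summands of $M_n$ are martingale differences with respect to $\mathcal{F}_{t_i}$, since $G_b(X_{t_i}-x)$ is $\mathcal{F}_{t_i}$-measurable and $X$ is Markov. The conditional variance is
\[
\tfrac{1}{n^2 h^2 u_n^{2\alpha(x)}}\sum_i \operatorname{Var}(F_i\mid X_{t_i})\,G_b^2(X_{t_i}-x),
\]
and Theorem \ref{thm:cond-approx-SL} gives $\operatorname{Var}(F_i\mid X_{t_i}) = h u_n^{\alpha(X_{t_i})} r(X_{t_i})(f^2)^{[\alpha(X_{t_i})]}(0) + O(h^2 u_n^4 + h u_n^{\alpha(X_{t_i})-\delta(X_{t_i})} + h^4 u_n^8)$. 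Replacing $\alpha(X_{t_i}), r(X_{t_i})$ by the values at $x$ via the same Taylor argument as for $B_n$, and applying the $G^2$-version of Lemma \ref{lem:kde-consistency} to $\tfrac{1}{n}\sum_i b^{-1}G^2((X_{t_i}-x)/b)\to m(x)\int G^2$, the conditional variance is asymptotically $\tfrac{1}{Tbu_n^{\alpha(x)}}\cdot \tfrac{r(x)(f^2)^{[\alpha(x)]}(0)\int G^2}{m(x)}\cdot (1+o_P(1)) = s^2/(Tbu_n^{\alpha(x)}) \cdot (1+o_P(1))/m(x)^2$ after dividing by $\hat m_n(x)^2$. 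Lindeberg's condition is immediate since each summand of $\sqrt{Tbu_n^{\alpha(x)}}\,M_n$ is bounded by a multiple of $\|f\|_\infty\|G\|_\infty/\sqrt{Tbu_n^{\alpha(x)}}\to 0$. A standard martingale CLT (e.g.\ \cite[Thm.\ 7.7.3]{durrett2010probability}) plus Slutsky's lemma then yields the asserted convergence. The second claim, under $\psi_n=o(\sqrt{Tbu_n^{\alpha(x)}})$, follows because all three terms $M_n,A_n,B_n$ vanish in probability after multiplication by $\psi_n$ under the weaker rate conditions.
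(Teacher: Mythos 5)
Your proposal is correct and takes essentially the same route as the paper's proof: the same martingale-plus-bias decomposition, the same use of Theorem \ref{thm:cond-approx-SL} for the approximation bias and the conditional variance, the same $b\log u_n$ bound on the smoothing bias via the differentiability of $r$, $\alpha$ and $\beta\mapsto f^{[\beta]}(0)$ (Lemma \ref{lem:frac-continuous}), and the same martingale CLT with Lindeberg checked by boundedness plus Slutsky. The only blemish is a harmless bookkeeping slip in the placement of $m(x)$ in the intermediate conditional-variance formula (the limit of $\frac1n\sum_i G_b^2(X_{t_i}-x)$ contributes a factor $m(x)$ in the numerator, which is then cancelled by dividing by $\hat m_n(x)^2$); the stated limit variance $s^2$ is correct.
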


To study the bias of the Nadaraya-Watson estimator $\hat{R}_n(x)$, it is crucial to control the smoothness of the target function. This is partly addressed by the smoothness assumptions imposed by (SL2). A missing part is given by the following Lemma.

\begin{lemma}\label{lem:frac-continuous}
	For a twice differentiable function $f$ such that $f$ and $f''$ are bounded, the mapping $\alpha\mapsto f^{[\alpha]}(0)$ is continuously differentiable on $(0,2)$. If $f$ vanishes on $[-\zeta,\zeta]$, $\zeta>0$, then $\alpha\mapsto f^{[\alpha]}(0)$ is continuously differentiable on $(0,\infty)$.
\end{lemma}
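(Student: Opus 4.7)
The plan is to differentiate $\alpha \mapsto f^{[\alpha]}(0)$ directly under the integral sign. Writing
\begin{align*}
	\Phi(\alpha) = f^{[\alpha]}(0) = \int_{\R} \frac{f(z) - f(0) - f'(0)z\mathds{1}_{|z|\leq 1}}{|z|^{1+\alpha}}\, dz,
\end{align*}
the candidate derivative is
\begin{align*}
	\Phi'(\alpha) = -\int_{\R} \frac{\left[ f(z)-f(0)-f'(0)z\mathds{1}_{|z|\leq 1} \right] \log|z|}{|z|^{1+\alpha}}\, dz,
\end{align*}
and I would justify this exchange, together with the continuity of $\Phi'$, by a dominated-convergence argument uniform in $\alpha$ on compact subintervals.

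For the first claim, fix a compact interval $[\alpha_0,\alpha_1] \subset (0,2)$. On $|z|\leq 1$, a second-order Taylor expansion of $f$ at $0$ gives $|f(z)-f(0)-f'(0)z| \leq \tfrac{1}{2}\|f''\|_\infty z^2$, so the integrand of $\Phi'(\alpha)$ is dominated by $\tfrac{1}{2}\|f''\|_\infty |z|^{1-\alpha_1}|\log|z||$, which is integrable near zero since $\alpha_1<2$. On $|z|>1$, the numerator is bounded by $2\|f\|_\infty$, giving the dominating function $2\|f\|_\infty|z|^{-1-\alpha_0}\log|z|$, integrable at infinity since $\alpha_0>0$. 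This legitimizes differentiation under the integral and, applied to a sequence $\alpha_n\to\alpha$, yields continuity of $\Phi'$ on $[\alpha_0,\alpha_1]$ by dominated convergence. As $[\alpha_0,\alpha_1]$ was arbitrary, $\Phi\in\mathcal{C}^1(0,2)$.

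For the second claim, if $f$ vanishes on $[-\zeta,\zeta]$, then $f(0)=0$ and $f'(0)=0$, so the definition simplifies to
\begin{align*}
	\Phi(\alpha) = \int_{|z|>\zeta} \frac{f(z)}{|z|^{1+\alpha}}\, dz.
\end{align*}
The singularity at the origin is entirely removed, so on any compact $[\alpha_0,\alpha_1]\subset(0,\infty)$ the integrand and its formal $\alpha$-derivative $-f(z)\log|z|/|z|^{1+\alpha}$ are dominated uniformly by $\|f\|_\infty \zeta^{-1-\alpha_1}$ on $\zeta<|z|\leq 1$ and by $\|f\|_\infty|z|^{-1-\alpha_0}|\log|z||$ on $|z|>1$; the latter is integrable precisely because $\alpha_0>0$. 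The same dominated-convergence argument then gives differentiability and continuity of the derivative on $(0,\infty)$.

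No serious obstacle arises here; the only mild subtlety is keeping the majorants uniform in $\alpha$ on compact subsets, which is handled by choosing the worst-case exponent $\alpha_1$ near zero and $\alpha_0$ at infinity in each of the two ranges.
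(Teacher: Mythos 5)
Your proposal is correct and follows essentially the same route as the paper: differentiation under the integral sign, with the $z^2$ Taylor bound near the origin and the $\|f\|_\infty$ bound at infinity providing a majorant uniform over compact $\alpha$-intervals (the paper merely absorbs the $\log|z|$ factor into a small power $|z|^{\mp\epsilon}$ where you keep it explicit). The treatment of the second claim, removing the origin singularity when $f$ vanishes near zero, also matches the paper's argument.
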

\begin{proof}[Proof of Lemma \ref{lem:frac-continuous}]
	Assume without loss of generality that $\zeta=1$. Note that
	\begin{align*}
		\left|\frac{d}{d\alpha} f^{[\alpha]}(0)\right|&= \left|\int \left[f(z)-f(0) - z f'(0)\mathds{1}_{|z|\leq 1}\right] \frac{d}{d\alpha} |z|^{-1-\alpha}dz\right|\\
		&=\left|\int \left[f(z)-f(0) - z f'(0)\mathds{1}_{|z|\leq 1}\right] |z|^{-1-\alpha}\ln|z|\,dz \right| \\
		&\leq C\left( \|f''\|_\infty \int_{-1}^{1} |z|^{1-\alpha-\epsilon}dz + 2 \|f\|_\infty \int_1^\infty |z|^{-1-\alpha+\epsilon}dz\right),
	\end{align*}
	for $\epsilon>0$ arbitrarily small  and a factor $C=C(\epsilon)$. For $\alpha\in(0,2)$, we can take $\epsilon$ sufficiently small such that the latter integrals are finite. Thus, an integrable majorant exists. This yields continuous differentiability. If $f$ vanishes on $[-\zeta,\zeta]$, the upper bound reads as \begin{align*}
		\left|\frac{d}{d\alpha} f^{[\alpha]}(0)\right|& \leq C \|f\|_\infty \int_\zeta^\infty |z|^{-1-\alpha+\epsilon}dz,
	\end{align*}
	which is also finite in the case $\alpha\geq 2$.
\end{proof}

We are now able to prove the asymptotic result of Theorem \ref{thm:kernel-clt}.

\begin{proof}[Proof of Theorem \ref{thm:kernel-clt}]
	We introduce the martingale differences \begin{align*}
		\epsilon_i^n = \frac{\sqrt{b}}{\sqrt{Tu^{\alpha(x)}}}\left\{f(u_n(X_{t_{i+1}}-X_{t_i})) - E\left[f(u_n(X_{t_{i+1}}-X_{t_i}))|\F_{t_i}\right]\right\} G_b(X_{t_i}-x),
	\end{align*}
	such that \begin{align}
		\nonumber&\quad \hat{m}_n(x)\sqrt{Tb u^{\alpha(x)}}\left[ \hat{R}_n(x) - r(x)f^{[\alpha(x)]}(0) \right] \\
		\label{eqn:bias-kernel}&= \sqrt{Tbu^{\alpha(x)}} \frac{1}{n} \sum_{i=1}^n  \left\{\frac{E\left[ f(u_n(X_{t_{i+1}} - X_{t_i})) | \F_{t_i} \right]}{hu^{\alpha(x)}} - r(x) f^{[\alpha(x)]}(0) \right\} G_b(X_{t_i}-x) \\
		\label{eqn:martingale-kernel}&\qquad + \sum_{i=1}^n \epsilon_i^n.
	\end{align}
	We will study the martingale term \eqref{eqn:martingale-kernel} first. 
	To this end, we investigate the sum of conditional variances.
	By Theorem \ref{thm:cond-approx-SL}, there are $\F_{t_i}$ adapted r.v.s $|a_i^n| \leq \tilde{K}_f \tilde{K}_{X_{t_i}}(h^2u^4+h^4u^8 + hu^{\alpha(X_{t_i})-\delta(X_{t_i})})$ such that  \begin{align}
		\label{eqn:variance-leading}\sum_{i=1}^n \Var(\epsilon_i^n | \F_{t_i}) &= \frac{1}{n} \sum_{i=1}^n u^{\alpha(X_{t_i})-\alpha(x)} r(X_{t_i}) (f^2)^{[\alpha(X_{t_i})]}(0) \frac{1}{b} G^2\left( \frac{X_{t_i}-x}{b} \right) \\
		\label{eqn:variance-bias}&\qquad +  \frac{1}{Tu^{\alpha(x)}}\sum_{i=1}^n a_i^n \frac{1}{b} G^2\left( \frac{X_{t_i}-x}{b} \right).
	\end{align}
	Note that \begin{align}
		\nonumber& \quad \left|\frac{1}{Tu^{\alpha(x)}}\sum_{i=1}^n a_i^n \frac{1}{b} G^2\left( \frac{X_{t_i}-x}{b} \right) \right| \\
		\nonumber & \leq \tilde{K}_f   \frac{1}{n} \sum_{i=1}^n \tilde{K}_{X_{t_i}} G_b^2(X_{t_i}-x)\left( 2hu^{4-\alpha(x)} + u^{\alpha(X_{t_i})-\alpha(x)-\delta(X_{t_i})} \right)  \\
		\nonumber & \leq \tilde{K}_f \overline{K}_x \frac{1}{n} \sum_{i=1}^n G_b^2(X_{t_i}-x)\left( 2hu^{4-\alpha(x)} + u^{\alpha(X_{t_i})-\alpha(x)-(\delta(X_{t_i})-\delta(x))} u^{-\delta(x)} \right) .
	\end{align}
	In the last step, we can replace $\tilde{K}_{X_{t_i}}$ by some $\overline{K}_x<\infty$, since $G$ is supported on $[-1,1]$ and $x\mapsto \tilde{K}_x$ is locally bounded. This is the case because the denominator $\alpha(X_{t_i})-\delta(X_{t_i})$ of $\tilde{K}_{X_{t_i}}$ is bounded since we may assume w.l.o.g.\ that $\delta(y)\leq \frac{3}{4}\alpha(y)$ for each $y\in\R$, as smaller values of $\delta$ make assumption (SL1) less restrictive. Furthermore, the rate constraints (K3) on $u$ ensure that $hu^{4-\alpha(x)}\to 0$ and, for $|X_{t_i}-x|\leq b$, $u^{\alpha(X_{t_i})-\alpha(x)} = 1+ \mathcal{O}(b \log u) \to 1$. Analogously, $u^{\delta(x) - \delta(X_{t_i})}\to 1$. Hence, \eqref{eqn:variance-bias} can be upper bounded by $\tilde{K}_f \overline{K}_x o(\psi_n^{-1}) \frac{1}{n} \sum_{i=1}^n G_b^2(X_{t_i}-x)$. But $\frac{1}{n} \sum_{i=1}^n G_b^2(X_{t_i}-x)\to m(x) \int G^2(y)dy$ by Lemma \ref{lem:kde-consistency}, thus \eqref{eqn:variance-bias} tends to zero in probability. More precisely, the term \eqref{eqn:variance-bias} is of order $o_P(\psi_n^{-1})$.
	
	Regarding the term \eqref{eqn:variance-leading}, we have \begin{align*}
		\eqref{eqn:variance-leading} &= \frac{1}{n} \sum_{i=1}^n   G_b^2(X_{t_i}-x) \left[ r(x) (f^2)^{[\alpha(x)]}(0) + \tilde{a}_i^n \right],
	\end{align*}
	for $|\tilde{a}_i^n| \leq C(x) b \log(u)$, $C(x)>0$, due to the bounded support of $G_b^2$ and since $\alpha\mapsto f^{[\alpha]}(0)$ is continuously differentiable on the interval $(0,2)$ by Lemma \ref{lem:frac-continuous}. The latter term tends to zero since $b\log u \to 0$, such that we obtain \begin{align*}
		\sum_{i=1}^n \Var(\epsilon_i^n | \F_{t_i}) &= r(x) (f^2)^{[\alpha(x)]}(0) \frac{1}{n} \sum_{i=1}^n G_b^2(X_{t_i}) + o_P(1) \\
		&= r(x) (f^2)^{[\alpha(x)]}(0) m(x) \int G^2(y)dy + o_P(1).
	\end{align*}
	
	Lindeberg's condition for the martingale term \eqref{eqn:martingale-kernel} can be verified by simply noting that $|\epsilon_i^n| \leq \|f\|_\infty (Tu^{\alpha(x)}b)^{-1/2} \to 0$, since $Tb\to \infty$ by assumption. As a consequence, the central limit theorem for martingales yields (see e.g.\ \cite[Theorem 7.7.3]{durrett2010probability})\begin{align*}
		\sum_{i=1}^n \epsilon_i^n \wconv \mathcal{N}(0, s^2m(x)^2).
	\end{align*}
	
	It remains to study the bias term \eqref{eqn:bias-kernel}. By Theorem \ref{thm:cond-approx-SL}, we have \begin{align}
		\nonumber&\quad \left|\sqrt{Tbu^{\alpha(x)}} \frac{1}{n}\sum_{i=1}^n \left\{\frac{E\left[ f(u_n(X_{t_{i+1}} - X_{t_i})) | \F_{t_i} \right]}{hu^{\alpha(x)}} - r(x) f^{[\alpha(x)]}(0) \right\} G_b(X_{t_i}-x) \right| \\
		\label{eqn:bias-kernel-smooth}&\leq \sqrt{Tbu^{\alpha(x)}} \frac{1}{n}\sum_{i=1}^n \left| r(X_{t_i})f^{[\alpha(X_{t_i})]}(0) u^{\alpha(X_{t_i})-\alpha(x)} - r(x) f^{[\alpha(x)]}(0) \right| G_b(X_{t_i}-x) \\
		\nonumber&\qquad + \sqrt{Tbu^{\alpha(x)}}  \frac{1}{n}\sum_{i=1}^n \tilde{K}_f \tilde{K}_{X_{t_i}} \left( hu^{4-\alpha(X_{t_i})} + u^{-\delta(X_{t_i})} \right) G_b(X_{t_i}-x) u^{\alpha(X_{t_i})-\alpha(x)}.
	\end{align}
	Since $\tilde{K}_x$ and $\alpha(x)$ are continuous in $x$, analogously to the bound on \eqref{eqn:variance-bias}, we obtain \begin{align*}
		&\quad \sqrt{Tbu^{\alpha(x)}}  \frac{1}{n}\sum_{i=1}^n \tilde{K}_f \tilde{K}_{X_{t_i}} \left( hu^{4-\alpha(X_{t_i})} + u^{-\delta(X_{t_i})} \right) G_b(X_{t_i}-x)u^{\alpha(X_{t_i})-\alpha(x)}\\
		&\leq \sqrt{Tbu^{\alpha(x)}} \tilde{K}_f \overline{K}_{x} o_P\left( \psi_n^{-1} \right) \frac{1}{n}\sum_{i=1}^n  G_b(X_{t_i}-x)\\
		&= o_P\left( \frac{\sqrt{Tbu^{\alpha(x)}}}{\psi_n} \right).
	\end{align*}
	Thus, if $\psi_n = \sqrt{Tbu^{\alpha(x)}}$, the latter term vanishes.
	
	The remaining bias term \eqref{eqn:bias-kernel-smooth} is typical for kernel smoothers and depends on the smoothness of the function $\chi(y)=r(y)f^{[\alpha(y)]}(0) u^{\alpha(y)-\alpha(x)}$ in a neighborhood of $y=x$. We have, for a constant $C>0$ and $|y-x|\leq b$ small enough, \begin{align*}
		|\chi(y)-\chi(x)| & \leq |r(y)f^{[\alpha(y)]}(0) | \left| u^{\alpha(y)-\alpha(x)}-1 \right| \\
		&\qquad + |r(y)| \left|f^{[\alpha(y)]}(0) - f^{[\alpha(x)]}(0) \right| + |r(y)-r(x)| \left|f^{[\alpha(x)]}(0)\right| \\
		&\leq Cb\log u.
	\end{align*}
	Here, we used that $y\mapsto r(y)$, $y\mapsto \alpha(y)$ and $\beta\mapsto f^{[\beta]}(0)$ are continuously differentiable. Consequently, we may handle the term \eqref{eqn:bias-kernel-smooth} as \begin{align*}
		&\quad \sqrt{Tbu^{\alpha(x)}} \frac{1}{n}\sum_{i=1}^n \left| r(X_{t_i})f^{[\alpha(X_{t_i})]}(0) u^{\alpha(X_{t_i})-\alpha(x)} - r(x) f^{[\alpha(x)]}(0) \right| G_b(X_{t_i}-x) \\
		&\leq C\sqrt{Tb u^{\alpha(x)}} b\log u \frac{1}{n} \sum_{i=1}^n G_b(X_{t_i}-x) \\
		&= o_P\left( \frac{\sqrt{Tbu^{\alpha(x)}}}{\psi_n} \right).
	\end{align*}
	If $\psi_n = \sqrt{Tbu^{\alpha(x)}}$, this term tends to zero.
	
	Thus, we have shown that \eqref{eqn:bias-kernel} tends to zero in probability and \eqref{eqn:martingale-kernel} converges in distribution to a Gaussian limit, i.e.\ \begin{align*}
		\hat{m}_n(x) \sqrt{Tbu^{\alpha(x)}} \left[ \hat{R}_n(x) - r(x)f^{[\alpha(x)]}(0) \right] \wconv \mathcal{N}\left(0, r(x) (f^2)^{[\alpha(x)]}(0)m(x) \int G^2(y)dy \right).
	\end{align*}
	Since $\hat{m}_n(x)\to m(x)$ by Lemma \ref{lem:kde-consistency}, an application of Slutsky's theorem completes the proof of the central limit theorem.
	
	For consistency under the weaker rate constraints, note that the CLT for the martingale part \eqref{eqn:martingale-kernel} still holds. Thus, the martingale term is always smaller than $\sqrt{Tbu^{\alpha(x)}}^{-1} = o(\psi_n^{-1})$. On the other hand, we have found that the bias term is of order $o_P(\psi_n^{-1})$.
\end{proof}

Theorem \ref{thm:alpha-clt} can now be established by means of the delta method.

\begin{proof}[Proof of Theorem \ref{thm:alpha-clt}]
We write
\begin{align*}
	-\hat{\alpha}_n(x) \log(\gamma) &= \log \frac{\hat{R}_n(x)}{r(x) f_\gamma^{[\alpha(x)]}(0) } - \log \frac{\hat{R}_n(x,\gamma)}{r(x) f_\gamma^{[\alpha(x)]}(0) } \\
	&= -\alpha(x) \log \gamma + \log\frac{\hat{R}_n(x)}{r(x) f^{[\alpha(x)]}(0) } - \log \frac{\hat{R}_n(x,\gamma)}{r(x) f_\gamma^{[\alpha(x)]}(0) }.
\end{align*}
In combination with Theorem \ref{thm:kernel-clt}, an application of the delta method to the logarithm yields \begin{align*}
	-\log(\gamma) \left[\hat{\alpha}_n(x)-\alpha(x)\right] &= \frac{\hat{R}_n(x) - r(x)f^{[\alpha(x)]}(0)}{ r(x)f^{[\alpha(x)]}(0)} - \frac{\hat{R}_n(x,\gamma) - r(x)f_\gamma^{[\alpha(x)]}(0)}{ r(x)f_\gamma^{[\alpha(x)]}(0)} + o_P\left(\psi_n^{-1}\right).
\end{align*}
The leading term can be treated by Theorem \ref{thm:kernel-clt}, applied to the design function \begin{align*}
	\tilde{f}(y) = \frac{f(y)}{r(x)f^{[\alpha(x)]}(0)} - \frac{f_\gamma(y)}{r(x)f_\gamma^{[\alpha(x)]}(0)} = \frac{1}{r(x)f^{[\alpha(x)]}(0)} \left[ f(y) - \frac{f_\gamma(y)}{\gamma^{\alpha(x)}} \right].
\end{align*}
That is,
\begin{align*}
	\tilde{R}_n(x)
	&= \frac{\hat{R}_n(x) - r(x)f^{[\alpha(x)]}(0)}{ r(x)f^{[\alpha(x)]}(0)} - \frac{\hat{R}_n(x,\gamma) - r(x)f_\gamma^{[\alpha(x)]}(0)}{ r(x)f_\gamma^{[\alpha(x)]}(0)} \\
	&= \frac{1}{\hat{m}_n(x)} \frac{1}{n} \sum_{i=1}^n \frac{\tilde{f}(u_n(X_{t_{i+1}}-X_{t_i}))}{hu^{\alpha(x)}} G_b(X_{t_i}-x) \wconv \mathcal{N}(0, ),
\end{align*}
which is asymptotically normal after appropriate rescaling as in Theorem \ref{thm:kernel-clt}, i.e.\ $\sqrt{Tbu^{\alpha(x)}} \tilde{R}_n(x) \wconv \mathcal{N}(0, \tilde{s}^2)$, with asymptotic variance $\frac{r(x)}{m(x)}\tilde{s}^2=(\tilde{f}^2)^{[\alpha(x)]}(0) \int G^2(y)dy$.
This decomposition of $\hat{\alpha}_n(x)-\alpha(x)$ also yields consistency at rate $\psi_n$.

We now turn to the asymptotics of $\hat{R}^*_n(x)$. Note that
\begin{align*}
	\hat{R}_n^*(x) -r(x) &= \left[ u^{\hat{\alpha}_n(x) - \alpha(x)}-1 \right] \frac{\hat{R}_n(x)}{f^{[\hat{\alpha}_n(x)]}(0)} + \left[\frac{\hat{R}_n(x)}{f^{[\hat{\alpha}_n(x)]}(0)} - r(x)  \right].
\end{align*}
The second term can be handled by Theorem \ref{thm:kernel-clt} and the already established central limit theorem for $\hat{\alpha}_n(x)$, which yields \begin{align*}
	\left|\frac{\hat{R}_n(x)}{f^{[\hat{\alpha}_n(x)]}(0)} - r(x) \right| &\leq \frac{1}{f^{[\hat\alpha_n(x)]}(0)} \left| \hat{R}_n(x) - r(x) f^{[\alpha(x)]}(0) \right| \\
	&\qquad + \frac{r(x)}{f^{[\hat\alpha_n(x)]}(0)}  \left| f^{[\alpha(x)]}(0) - f^{[\hat\alpha_n(x)]}(0) \right| \\
	&= \mathcal{O}_P\left(\psi_n^{-1}\right),
\end{align*}
since $\alpha\mapsto f^{[\alpha]}(0)$ is continuously differentiable. As a consequence, $\hat{R}_n(x) / f^{[\hat\alpha_n(x)]}(0) \to r(x)$ in probability as $n\to\infty$. We thus study the asymptotics of the factor \begin{align*}
	 u^{\hat{\alpha}_n(x)-\alpha(x)}-1  &=  \exp\left[\left({\hat{\alpha}_n(x)-\alpha(x)}\right)\log u\right] -1.
\end{align*}
Since $|\hat{\alpha}_n(x) - \alpha(x)| \log u = \mathcal{O}_P(\psi_n^{-1} \log u) \to 0$ in probability, the delta method yields \begin{align*}
		\frac{\sqrt{Tbu^{\alpha(x)}}}{\log u} \left\{ u^{\hat{\alpha}_n(x)-\alpha(x)}-1 \right\} = \sqrt{Tbu^{\alpha(x)}} \left\{ \hat{\alpha}_n(x) - \alpha(x) \right\} + o_P(1),
\end{align*} 
which converges in distribution. Including the asymptotic scaling factor $r(x)$ completes the proof of the central limit theorem. The delta method also yields consistency at rate $\psi_n/\log u$ under the weaker conditions.
\end{proof}

\section*{Acknowledgments}
The author would like to thank two anonymous referees for constructive comments which lead to improvements of the present article, Ansgar Steland and Mia Kornely for helpful discussions, as well as the participants of the SPA 2018 conference in Gothenburg and the DYNSTOCH 2018 workshop in Porto. Declarations of interest: none

\bibliography{nonparametric-ergodic}
\bibliographystyle{apalike}

\end{document}